\newcounter{NNN}[table]\numberwithin{NNN}{table}
\newcommand{\CC}{\mathbb C}
\newcommand{\PP}{\mathbb P}
\newcommand{\ZZ}{\mathbb Z}
\newcommand{\MMM}{\mathscr{M}}
\newcommand{\EEE}{\mathscr{E}}
\newcommand{\OOO}{\mathscr{O}}
\newcommand{\LLL}{\mathscr{L}}
\newcommand{\FFF}{\mathscr{F}}
\newcommand{\KKK}{{\mathscr{K}}}
\newcommand{\pg}{\mathrm{p_g}}
\newcommand{\pa}{\mathrm{p_a}}
\newcommand{\h}{\mathrm{h}}
\newcommand{\Bim}{\operatorname{Bim}}
\newcommand{\Graph}{\operatorname{Graph}}
\newcommand{\ind}{\operatorname{ind}}
\newcommand{\hol}{\operatorname{hol}}
\newcommand{\Aut}{\operatorname{Aut}}
\newcommand{\ab}{\mathrm{a}}
\newcommand{\comp}{\circ}
\newcommand{\rk}{\operatorname{rk}}
\newcommand{\GL}{\operatorname{GL}}
\def \ge {\geqslant}
\def \le {\leqslant}
\theoremstyle{plain}
\newtheorem{theorem}[subsection]{Theorem}
\newtheorem{lemma}[subsection]{Lemma}
\newtheorem{proposition}[subsection]{Proposition}
\newtheorem{corollary}[subsection]{Corollary}
\theoremstyle{definition}
\newtheorem{definition}[subsection]{Definition}
\newtheorem{remark}[subsection]{Remark}
\newtheorem{remark-definition}[subsection]{Remark-Definition}
\title{Finite groups of bimeromorphic selfmaps of uniruled K\"ahler threefolds}
\author{Yu.~G.~Prokhorov, \quad C.~A.~Shramov}
\address{\emph{Yuri Prokhorov}
\newline
\textnormal{Steklov Mathematical Institute of RAS,
8 Gubkina street, Moscow 119991, Russia.
}
\newline
\textnormal{
HSE University, Russian Federation,
Laboratory of Algebraic Geometry, 6 Usacheva str., Moscow, 119048, Russia.
}
\newline
\textnormal{\texttt{prokhoro@mi-ras.ru}}}
\address{\emph{Constantin Shramov}
\newline
\textnormal{Steklov Mathematical Institute of RAS,
8 Gubkina street, Moscow 119991, Russia.
}
\newline
\textnormal{
HSE University, Russian Federation,
Laboratory of Algebraic Geometry, 6 Usacheva str., Moscow, 119048, Russia.
}
\newline
\textnormal{\texttt{costya.shramov@gmail.com}}}
\thanks{This work is supported by the Russian Science Foundation under grant \textnumero 18-11-00121.}
\date{}
\begin{document}
\maketitle

\begin{abstract}
We classify uniruled compact K\"ahler threefolds whose groups of bimeromorphic selfmaps do not have Jordan property.
\end{abstract}

\tableofcontents

\section{Introduction}

Groups of automorphisms and bimeromorphic selfmaps of complex manifolds can have a very complicated
structure. In many cases it is relatively easy to study them on the level of finite subgroups.
Although even in the most simple situations such groups can contain infinitely many
non-isomorphic finite subgroups, it often happens that certain important parameters of these
subgroups are bounded. An example of such a behavior is provided by \emph{Jordan property}.

\begin{definition}[{see \cite[Definition~2.1]{Popov2011}}]
\label{definition:Jordan}
A group~$\Gamma$ is called \emph{Jordan}
(alternatively, one says that $\Gamma$ has \emph{Jordan property}),
if there exists a constant $J=J(\Gamma)$ such that any finite subgroup~\mbox{$G\subset\Gamma$}
contains a normal abelian subgroup $A\subset G$ of index at most~$J$.
\end{definition}

An old theorem due to C.\,Jordan states that the groups
$\GL_n(\CC)$ enjoy this property (see for instance~\mbox{\cite[Theorem~36.13]{Curtis-Reiner-1962}}).
J.-P.\,Serre pointed out that this is also the case for certain groups of geometric origin; namely,
he proved (\cite[Theorem~5.3]{Serre2009},
\mbox{\cite[Th\'eor\`eme~3.1]{Serre-2008-2009}}),
that the group of birational selfmaps of the projective plane over a field of zero characteristic is Jordan.
Yu.\,G.\,Zarhin~\cite{Zarhin10} found an example of an algebraic surface whose group of birational selfmaps
is not Jordan, and V.\,L.\,Popov classified all such surfaces.

\begin{theorem}[{\cite[Theorem~2.32]{Popov2011}}]
\label{theorem:Popov}
Let $X$ be an algebraic surface over the field of complex numbers~$\CC$.
Then the group of birational selfmaps of $X$ is not Jordan if and only if~$X$ is birational to a direct product~\mbox{$E\times\PP^1$}, where $E$
is an elliptic curve.
\end{theorem}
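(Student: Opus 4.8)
The plan is to split the statement into two implications. For the "if" direction, I would first show that the group of birational selfmaps of a surface is a birational invariant, so it suffices to treat $X=E\times\PP^1$ directly. The group $\Bim(E\times\PP^1)$ contains the subgroup $\Aut(E)\ltimes\PGL_2(\CC(E))$, and already the translation subgroup $E\subset\Aut(E)$ together with the fibrewise automorphisms furnishes, for every prime $p$, finite subgroups isomorphic to $(\ZZ/p)^2$ acting through translations on $E$ combined with a suitable order-$p$ element of $\PGL_2$ twisted over the $p$-torsion; more simply, one exhibits finite subgroups $G_p$ of the form $E[p]\rtimes\ZZ/p$ of order $p^3$ whose abelian subgroups have index at least $p$, so no uniform bound $J$ exists. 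I would make this explicit by writing down the monodromy action of $E[p]$ on a $\PP^1$-bundle of degree divisible by $p$, or equivalently by using the Heisenberg-type extension coming from a line bundle of degree $p$ on $E$.

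For the "only if" direction, I would argue by contraposition using the classification of surfaces up to birational equivalence. If $X$ is not birational to $E\times\PP^1$, run the minimal model program: $X$ is birational either to a minimal surface of non-negative Kodaira dimension, to $\PP^2$, or to a ruled surface over a base curve $C$. In the first case $\Aut$ of the minimal model has finite index issues controlled by the fact that $\Bim(X)=\Bim$ of the minimal model is itself close to $\Aut$ of a minimal model, and Jordanness follows from the structure of automorphism groups of surfaces of Kodaira dimension $\geq 0$ (these are extensions of finite or abelian-by-finite groups). In the rational case, $\Bim(X)\cong\Bim(\PP^2)=\mathrm{Cr}_2(\CC)$, which is Jordan by Serre's theorem quoted above. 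The remaining case is $X$ birational to $C\times\PP^1$ with $g(C)\geq 2$ (the case $g(C)=1$ being exactly the excluded one, and $g(C)=0$ being rational): here $\Bim(C\times\PP^1)\cong\Aut(C)\ltimes\PGL_2(\CC(C))$, and one uses that $\Aut(C)$ is finite while $\PGL_2(\CC(C))$ is Jordan — the latter because finite subgroups of $\PGL_2$ over any field have bounded order (they are cyclic, dihedral, or one of $A_4,S_4,A_5$), so there is even a uniform bound, not merely Jordanness.

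The main obstacle is the genus-$1$-base ruled case, i.e.\ confirming that $\Bim(E\times\PP^1)$ genuinely fails Jordanness: one must produce finite subgroups with no normal abelian subgroup of bounded index, and the cleanest way is the Heisenberg group construction on $E[n]$ acting on an appropriate $\PP^1$-bundle, checking that the commutator pairing on $E[n]$ is non-degenerate so that the abelian subgroups have index growing with $n$. A secondary subtlety is handling surfaces of Kodaira dimension $0$, particularly (quasi-)bielliptic and abelian surfaces, where $\Aut$ of the minimal model can be infinite; here I would invoke that such a group is an extension of a finite group by a subgroup of translations (hence abelian), which is therefore Jordan with $J$ bounded in terms of the finite quotient — and crucially that a minimal surface of non-negative Kodaira dimension other than those birational to $E\times\PP^1$ does not admit an infinite supply of badly-nonabelian finite automorphism groups.
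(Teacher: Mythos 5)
The paper does not prove this statement at all: it is quoted verbatim from Popov's work (\cite[Theorem~2.32]{Popov2011}) and used as a black box, so there is no internal proof to compare yours against. Measured against the actual proof in the literature (Popov's classification combined with Zarhin's example), your outline follows essentially the same route: Zarhin's theta-group/Heisenberg construction on a $\PP^1$-bundle $\PP(\OOO_E\oplus\LLL)$ with $\deg\LLL=n$ for the failure of the Jordan property over an elliptic base (the non-degeneracy of the commutator pairing on $E[n]$ forcing abelian subgroups of the order-$n^3$ Heisenberg group to have index at least $n$ is indeed the crux), and the Enriques--Kodaira classification plus the equivariance of the ruling for the converse. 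One cosmetic point: the group you want is the central extension $1\to\mu_n\to H_n\to E[n]\to 1$ rather than a semidirect product $E[p]\rtimes\ZZ/p$, though for exponent $p$ these happen to coincide as abstract groups.

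There is one genuinely false claim in your converse direction: finite subgroups of $\PGL_2(K)$ do \emph{not} have bounded order for $K\supset\CC$, since $\PGL_2(\CC)$ already contains cyclic (and dihedral) subgroups of every order. So you cannot conclude that $\Bim(C\times\PP^1)$ for $g(C)\ge 2$ has bounded finite subgroups. What is true, and what your argument actually needs, is that $\PGL_2(K)$ is Jordan with a \emph{uniform} constant (every finite subgroup is cyclic, dihedral, $A_4$, $S_4$ or $A_5$, hence has a normal abelian subgroup of index at most $60$); combined with finiteness of $\Aut(C)$ and the extension lemma (Lemma~\xref{lemma:group-theory} of this paper) this still yields Jordanness of $\Bim(C\times\PP^1)$, so the gap is repairable in one line. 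Your treatment of Kodaira dimension $0$ and $1$ is only a sketch --- for abelian and bielliptic surfaces you should say explicitly that $\Aut(S)/\Aut^0(S)$ has bounded finite subgroups (e.g.\ via its faithful action on $H^*(S,\ZZ)$ and Minkowski's bound) so that Lemma~\xref{lemma:group-theory} applies with abelian kernel $\Aut^0(S)$, and for $\varkappa=1$ you need the finiteness of the image of $\Aut(S)$ in $\Aut$ of the base of the pluricanonical fibration --- but these are standard and fillable.
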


There are certain results concerning Jordan property for birational automorphism groups of
higher dimensional algebraic varieties (see \cite[Theorem~1.8]{ProkhorovShramov-RC},
\cite[Theorem~1.8]{Prokhorov-Shramov-2013}, \mbox{\cite[Theorem~1.1]{Birkar}}).
Furthermore, for birational automorphism groups of the projective plane and the three-dimensional
projective space the bounds for the corresponding constants are known (see \cite{Yasinsky2016a}
and \cite{Prokhorov-Shramov-JCr3}).
Some of these bounds can be made more precise if instead of arbitrary finite groups one considers 
a more restricted class of groups, for instance, finite $p$-groups, see~\cite{ProkhorovShramov-p-groups}
and~\cite[\S3.3]{Popov-3plots}. Also, there are numerous results on Jordan property for diffeomorphism groups of smooth manifolds, and other groups
of this kind, see \cite{Popov-Diff}, \cite{CsikosPyberSzabo}, \cite{Riera2016}, \cite{Riera-Spheres}, \cite{Riera-OddCohomology},
\cite{Riera-Symp}, \cite{Riera-HamSymp}, and~\cite{Ye2017}.

For algebraic threefolds one has the following analog of Theorem~\ref{theorem:Popov}.

\begin{theorem}[{\cite[Theorem~1.8]{Prokhorov-Shramov-3folds}}]
\label{theorem:dim-3}
Let $X$ be a three-dimensional algebraic variety over $\CC$.
Than its group of birational selfmaps is not Jordan if and only if
$X$ is birational either to $E\times\PP^2$, where $E$ is an elliptic curve, or to $S\times\PP^1$, where $S$ is a surface of one of the
following types:
\begin{itemize}
\item
an abelian surface;

\item
a bielliptic surface;

\item
a surface of Kodaira dimension $1$ such that the Jacobian fibration of its pluricanonical
fibration is locally trivial \textup(in Zariski topology\textup).
\end{itemize}
\end{theorem}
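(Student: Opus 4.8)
The strategy is to run the minimal model program and study the maximal rationally connected (MRC) fibration, reducing the problem to a tower of fibrations over a non-uniruled base. First, if $X$ is not uniruled then $\operatorname{Bir}(X)$ is Jordan --- in fact its finite subgroups are uniformly bounded --- by the known results on non-uniruled threefolds (see \cite{Prokhorov-Shramov-2013}); since $E\times\PP^2$ and all the products $S\times\PP^1$ occurring in the statement are uniruled, this already establishes one implication, and from now on I may take $X$ to be a smooth projective uniruled threefold. Let $\psi\colon X\dashrightarrow W$ be the MRC fibration. Then $W$ is not uniruled, $\dim W\in\{0,1,2\}$, and by functoriality of the MRC fibration every birational selfmap of $X$ descends to a birational selfmap of $W$, giving an exact sequence $1\to\operatorname{Bir}(X/W)\to\operatorname{Bir}(X)\to\operatorname{Bir}(W)$, where $\operatorname{Bir}(X/W)$ denotes the group of birational selfmaps of the rationally connected generic fibre of $\psi$, viewed as a variety over $\CC(W)$.

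If $\dim W=0$ then $X$ is rationally connected, and $\operatorname{Bir}(X)$ is Jordan by \cite{ProkhorovShramov-RC} together with boundedness of terminal Fano threefolds (a special case of \cite{Birkar}); none of the threefolds in the list is rationally connected, so nothing is lost here. If $\dim W=1$ then $W$ is a smooth curve $C$ of genus at least $1$ --- a rational base would make $X$ rationally connected --- the generic fibre of $\psi$ is a geometrically rational surface over $\CC(C)$, and the relative minimal model program presents $X$, birationally over $C$, either as a del Pezzo fibration over $C$ or as a conic bundle over a surface ruled over $C$. If $\dim W=2$ then $W$ is a non-uniruled surface $S$ and $X$ is birational to a standard conic bundle over $S$, the generic fibre being a conic over $\CC(S)$. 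In the last two cases $\operatorname{Bir}(W)$ is Jordan: for a curve this is obvious, and for a non-uniruled surface it follows from Theorem~\ref{theorem:Popov}, since such a surface is certainly not birational to $E\times\PP^1$. Moreover $\operatorname{Bir}(X/W)$ has a Jordan constant depending only on the dimension of the fibre: a conic over a field of characteristic zero has finite automorphism subgroups of order at most $60$, and the birational automorphism group of a geometrically rational surface over a finitely generated field of characteristic zero is Jordan.

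In the remaining cases the kernel $\operatorname{Bir}(X/W)$ is ``bounded'' and the quotient is Jordan, so $\operatorname{Bir}(X)$ can fail to be Jordan only through finite abelian subgroups $\bar G\subset\operatorname{Bir}(W)$ whose preimages in $\operatorname{Bir}(X)$ contain no abelian subgroup of uniformly bounded index --- i.e.\ through Heisenberg-type central extensions built from the torsion of $\operatorname{Alb}(W)$ (or of a relative Jacobian) and the scaling action on the generic fibre. The geometric mechanism preventing this in the ``generic'' cases is rigidification: any genuinely non-trivial invariant of the fibration --- a non-trivial Brauer class of a Severi--Brauer fibration over $C$, the discriminant curve of a conic bundle, or the moduli of a non-isotrivial del Pezzo fibration --- must be preserved by $\operatorname{Bir}(X)$, and its stabilizer inside $\operatorname{Bir}(W)$ (for instance, the translations of an abelian variety fixing a polarization, or those fixing a ramification divisor up to the relevant equivalence) is small enough that the image of $\operatorname{Bir}(X)$ in $\operatorname{Bir}(W)$ is virtually bounded, whence $\operatorname{Bir}(X)$ is Jordan. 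Running through the subcases --- base of genus at least $2$; genus-$1$ base carrying a non-trivial Severi--Brauer or a non-isotrivial del Pezzo fibration; conic bundle with non-empty discriminant over a non-uniruled surface --- eliminates everything except the situations in which $X$ is birational to a product with its MRC base, leaving only $\operatorname{Bir}(E\times\PP^2)$ and $\operatorname{Bir}(S\times\PP^1)$ with $S$ a non-uniruled surface to analyse.

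It remains to decide when these products have non-Jordan birational selfmap group. For the products $S\times\PP^1$ one must show that $\operatorname{Bir}(S\times\PP^1)$ is not Jordan exactly when $S$ is an abelian surface, a bielliptic surface, or a surface of Kodaira dimension $1$ whose pluricanonical fibration has Zariski-locally-trivial Jacobian. For the ``if'' direction one generalizes Zarhin's construction (\cite{Zarhin10}): in each of these three cases $S$ carries, for every integer $d$, a finite abelian subgroup $G_d\subset\operatorname{Aut}(S)$ of ``relative translations'' --- the $d$-torsion $S[d]$ when $S$ is abelian; the $d$-torsion translations in the elliptic direction preserved by the whole acting group when $S$ is bielliptic; the $d$-torsion sections of the relative Jacobian of the elliptic pluricanonical fibration (which exist over $\CC(C)$ precisely because of Zariski-local-triviality, and for which the corresponding relative translations act biregularly on a minimal model without further obstruction) when $\kappa(S)=1$ --- together with a divisor $D$ such that $\tau^{*}D-D=\divi(g_\tau)$ is principal for every $\tau\in G_d$; then the maps $(s,t)\mapsto\bigl(\tau(s),\,g_\tau(s)\,t\bigr)$ generate a finite subgroup of $\operatorname{Bir}(S\times\PP^1)$ that is a central extension of $G_d$ by $\mu_d$ with non-degenerate Weil-type commutator pairing, hence a Heisenberg group with no abelian subgroup of index less than $d$; letting $d\to\infty$ shows $\operatorname{Bir}(S\times\PP^1)$ is not Jordan. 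Moreover $E\times\PP^2$ is birational to $(E\times\PP^1)\times\PP^1$ and therefore has $\operatorname{Bir}(E\times\PP^1)$ as a subgroup, so it is not Jordan either. For the ``only if'' direction one uses the classification of surfaces to see that for every other non-uniruled $S$ the group $\operatorname{Aut}(S)$ carries no unbounded family of relative-translation abelian subgroups: if $\kappa(S)=2$ it is finite; for K3 and Enriques surfaces there is no abelian-variety quotient and in fact the finite subgroups of $\operatorname{Bir}(S\times\PP^1)$ have bounded order; and for $\kappa(S)=1$ the only relative-translation abelian subgroups come from torsion sections of the relative Jacobian, which are unbounded exactly in the Zariski-locally-trivial case. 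The main difficulty is this ``only if'' analysis together with the rigidification step: it requires a workable criterion for when an extension of a Jordan group by a bounded group is again Jordan, and then a careful, case-by-case verification of that criterion against the birational geometry of uniruled threefolds --- in particular a precise control of the roles of the Brauer class of a Severi--Brauer fibration, the discriminant of a conic bundle, and Zariski-local-triviality of the Jacobian of an elliptic surface.
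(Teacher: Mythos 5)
The paper does not actually prove this statement: Theorem~\ref{theorem:dim-3} is quoted as a known result from \cite{Prokhorov-Shramov-3folds} and used as an input, so there is no proof in the present text to compare yours against. Judged on its own, your proposal reproduces the correct global strategy of that reference --- pass to the MRC fibration $\psi\colon X\dashrightarrow W$, use the exact sequence $1\to\operatorname{Bir}(X/W)\to\operatorname{Bir}(X)\to\operatorname{Bir}(W)$, split into cases by $\dim W$, and build Heisenberg-type groups for the ``if'' direction --- but it contains concrete errors and leaves precisely the hard implications unproved.

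Two specific problems. First, your key reduction rests on the claim that the kernel $\operatorname{Bir}(X/W)$ is \emph{bounded}, e.g.\ that ``a conic over a field of characteristic zero has finite automorphism subgroups of order at most $60$''. This is false in the case that matters most: for any field $K\supset\CC$ the group $\PGL_2(K)=\operatorname{Aut}(\PP^1_K)$ contains $\mu_d$ for every $d$ (take $\operatorname{diag}(\zeta_d,1)$), so for $X=S\times\PP^1$ the kernel is only Jordan, not bounded; the same applies to $\operatorname{Bir}$ of a geometrically rational surface over $\CC(C)$. This is not cosmetic: the whole difficulty of the theorem is that an extension of a Jordan group by a Jordan group need not be Jordan --- that is exactly how $\operatorname{Bir}(E\times\PP^1)$ fails to be Jordan --- so the scheme ``kernel bounded, quotient Jordan, hence Jordan up to explicit Heisenberg obstructions'' collapses at its first step. (Likewise, finite subgroups of $\operatorname{Bir}$ of a non-uniruled threefold are not uniformly bounded --- consider an abelian threefold --- only Jordanness holds there.) Second, the ``only if'' direction, which is the actual content of the theorem, is asserted rather than argued: the paragraph on ``rigidification'' claims without proof that a nonempty conic-bundle discriminant, a nontrivial Brauer class over an elliptic curve, non-isotriviality of a del Pezzo fibration, a K3 or Enriques base, and non-local-triviality of the Jacobian of the pluricanonical fibration each force Jordanness. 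Each of these is a separate nontrivial theorem in \cite{Prokhorov-Shramov-3folds}, and they require a precise quantitative criterion (Jordan kernel of bounded rank, control of the image in $\operatorname{Bir}(W)$) that your sketch gestures at but never states or verifies. As written, the proposal is a reasonable roadmap with an incorrect central reduction and no proof of the hard cases.
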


Recently, there were attempts to study the groups of automorphisms and bimeromorphic selfmaps
of complex manifolds from the point of view of Jordan property. In particular,
in~\cite{ProkhorovShramov-Surfaces}
(see also~\cite{ProkhorovShramov-IK})
the authors obtained a generalization of Theorem~\ref{theorem:Popov}
for the case of compact complex surfaces.
Jordan property for automorphism groups of three-dimensional Moishezon compact complex manifolds
was proved in~\cite{Pr-Sr:Moishezon}.
However, for arbitrary compact complex manifolds of higher dimension
the situation is still unclear because of the lack of appropriate techniques
that would allow to work with their automorphism groups. On the other hand, it
is known that compact K\"ahler manifolds exhibit many similarities with
algebraic varieties, in particular on the level of automorphism groups
(see~\cite{Kim}).

In this paper we prove the following result adjacent to Theorem~\ref{theorem:dim-3}.

\begin{theorem}
\label{theorem:main}
Let $X$ be a non-algebraic three-dimensional uniruled compact K\"ahler manifold.
Suppose that its group of bimeromorphic selfmaps is not Jordan.
Then $X$ is bimeromorphic to the projectivization of a holomorphic vector bundle
of rank $2$ on a two-dimensional complex torus $S$ of algebraic dimension $1$.
If moreover the algebraic dimension of $X$ equals $2$, then $X$ is bimeromorphic to the direct product~\mbox{$\PP^1\times S$}.
\end{theorem}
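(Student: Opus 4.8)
The plan is to reduce $X$ to a Mori fiber space via the minimal model program for compact Kähler threefolds and then to study the induced action of $\Bim(X)$ on the base. Since $X$ is uniruled, $K_X$ is not pseudo-effective, so the program produces a bimeromorphic model $X'$ of $X$ with $\QQ$-factorial terminal singularities together with a Mori fiber space $f\colon X'\to Z$, $\dim Z\le 2$; as $\Bim(X)=\Bim(X')$, we work with $X'$. If $\dim Z=0$ then $X'$ is a Fano threefold, hence rationally connected, hence projective, so $X$ is Moishezon and Kähler, hence projective — a contradiction. If $\dim Z=1$ then $Z$ is a smooth projective curve, $-K_{X'}$ is $f$-ample, and $-K_{X'}+nf^*A$ is ample on $X'$ for $n\gg0$ and $A$ ample on $Z$, so $X'$ is projective — again a contradiction. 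Hence $\dim Z=2$ and $f$ is a conic bundle; the same positivity argument shows $Z$ is not projective, so in particular $\kappa(Z)\ge 0$ (a compact Kähler surface of Kodaira dimension $-\infty$ is projective) and $Z$ is not uniruled. Thus $f$ realizes the maximal rationally connected fibration of $X$, over a non-algebraic, non-uniruled compact Kähler surface $Z$.

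By functoriality of the MRC fibration, $\Bim(X)$ acts on it and we get an exact sequence $1\to K\to\Bim(X)\to\Gamma\to 1$, where $\Gamma\subseteq\Bim(Z)=\Aut(Z_{\min})$ (bimeromorphic selfmaps of a non-uniruled surface are the biregular automorphisms of its minimal model) and $K$ is the group of bimeromorphic selfmaps of $X'$ over $Z$. Any nontrivial element of $K$ acts nontrivially on a general fiber, so every finite subgroup of $K$ embeds into $\Aut(\PP^1)=\PGL_2(\CC)$; hence $K$ is Jordan with a universal constant. Therefore, if $\Bim(X)$ is not Jordan, then $\Gamma$ is infinite and the above extension fails to be Jordan. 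Invoking the classification of non-algebraic compact Kähler surfaces — complex $2$-tori and K3 surfaces of algebraic dimension $\le 1$, and properly elliptic surfaces of algebraic dimension $1$ — together with the criteria for the Jordan property of group extensions, and arguing along the lines of the proof of Theorem~\ref{theorem:dim-3}, one shows that this forces $Z_{\min}$ to be a complex $2$-torus $S$: over a K3 surface or a properly elliptic surface there is no conic bundle carrying a fiberwise-nontrivial action of arbitrarily large finite abelian groups lifting automorphisms of the base. Among $2$-tori, $a(S)=2$ is impossible (then $S$, hence any conic bundle over it, is projective), and $a(S)=0$ is impossible as well — the absence of positive classes in $\mathrm{NS}(S)$ rules out the "theta-type" group actions needed to violate the Jordan property — so $a(S)=1$.

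It remains to identify the conic bundle. In the non-Jordan case $\Gamma$ contains a dense group of translations of $S$, and each such translation lifts to a bimeromorphic selfmap of $X'$ preserving the (canonical) MRC fibration; the discriminant curve of the conic bundle, being invariant under all of them, is empty, and a further analysis of the translation action shows that the resulting $\PP^1$-bundle is a genuine projectivized bundle, so $X$ is bimeromorphic to $\PP(\mathcal E)$ for a holomorphic rank-$2$ vector bundle $\mathcal E$ on $S$. Finally, assume $a(X)=2$. The algebraic reduction of $X$ then dominates that of $S$, namely the quotient $S\to C$ onto an elliptic curve with elliptic-curve fibers; the additional meromorphic functions on $X$ force $\PP(\mathcal E)$ to be bimeromorphically trivial (concretely, $\mathcal E$ acquires a meromorphic sub-line-bundle), whence $X$ is bimeromorphic to $\PP^1\times S$.

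The main difficulty lies in the second step: proving that $\Bim(X)$ is Jordan whenever the base $Z_{\min}$ is a K3 surface, a properly elliptic surface, or a complex $2$-torus of algebraic dimension $0$. This reduces to bounding the finite subgroups of an extension of a subgroup of $\Aut(Z_{\min})$ by the fiberwise group of $\PGL_2$-type, and its core is a non-existence statement for large "theta groups" acting on conic bundles over these surfaces — informally, such surfaces lack the positivity needed to reproduce the Zarhin-type constructions. This parallels the most delicate part of the proof of Theorem~\ref{theorem:dim-3}; the genuinely new inputs are the minimal model program for compact Kähler threefolds and the classification of non-algebraic compact Kähler surfaces.
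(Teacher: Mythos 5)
Your proposal reproduces the correct skeleton of the argument, and it essentially coincides with the paper's strategy at the top level: reduce to a rational-curve fibration $f\colon X'\to Z$ over a non-algebraic compact K\"ahler surface with $\varkappa(Z)\ge 0$ (the paper uses the maximal rationally connected fibration plus a standardization of the conic bundle rather than the full K\"ahler MMP, but this difference is cosmetic), note that $f$ is $\Bim(X)$-equivariant, that the fiberwise kernel embeds into $\PGL_2(\CC)$ on a typical fiber and hence is Jordan, and conclude that $\Bim(Z)$ must have unbounded finite subgroups. Up to this point the proposal is sound.

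The genuine gap is that every step after this is asserted rather than proved, and the mechanisms you sketch are not the ones that actually work. (1) K3 surfaces are excluded not by any statement about conic bundles over them but simply because $\Aut$ (equivalently $\Bim$) of a K3 has bounded finite subgroups; your ``no large theta groups on conic bundles over K3s'' is both unproven and beside the point. (2) For properly elliptic surfaces (and Kodaira surfaces, which you omit from your classification but which are not K\"ahler and so do not arise here), the paper's argument is structural: the image of $\Aut(Z)$ in $\Aut(C)$ for the pluricanonical fibration $Z\to C$ is finite, and the subgroup of $\Bim(X)$ acting fiberwise over $C$ is automatically holomorphic on typical two-dimensional fibers (because when $\ab(Z)=1$ all curves of $Z$, hence all indeterminacy loci, sit over finitely many points of $C$), so it is Jordan since $\Aut$ of a compact surface is Jordan. (3) Your exclusion of tori with $\ab(S)=0$ via ``absence of positive classes in $\mathrm{NS}(S)$'' is not an argument; the actual proof shows that over a surface containing no curves every bimeromorphic selfmap of a standard conic bundle extends to a biholomorphism (a Hartogs-type extension of $f_*\KKK_X^{-1}$ across a finite set), whence $\Bim(X)=\Aut(X)$ is Jordan by Kim's theorem. (4) The passage from ``conic bundle over a torus with non-Jordan $\Bim$'' to ``projectivization of a rank-$2$ bundle'' is precisely the content of a nontrivial cited result; your sketch (dense translations force an empty discriminant, then the $\PP^1$-bundle is a projectivization) ignores both that finite groups of translations need not move curves on $S$ off themselves (all curves lie in fibers of $S\to C$, and translations in the kernel of $S\to C$ preserve every such fiber) and the Brauer-type obstruction to a $\PP^1$-bundle being a projectivized vector bundle. (5) Finally, the $\ab(X)=2$ case requires showing that $X$ is bimeromorphic to the fiber product $S\times_C B$ of the MRC base with the algebraic reduction, which takes a real argument (ruling out multisections using the fact that no curve on $S$ dominates $C$); ``$\mathcal E$ acquires a meromorphic sub-line-bundle'' does not substitute for it. In short, the proposal correctly identifies where the difficulty lies but does not close any of the steps that constitute the proof.
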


\begin{remark}
Let $S$ be a complex torus of positive algebraic dimension. Then the group of
bimeromorphic selfmaps of $\PP^1\times S$ is not Jordan, see
\cite[Theorem~1.9]{Zarhin-Tori}. Furthermore, there are examples of (non-trivial) decomposable
holomorphic vector bundles $\EEE$ of rank~$2$ on~$S$ such that the group of bimeromorphic selfmaps
of the projectivization of $\EEE$ is not Jordan, see \cite[Theorem~1.10]{Zarhin-Tori}
and~\mbox{\cite[Theorem~1.12]{Zarhin-Tori}}. We do not know whether one can choose such a projectivization
so that it has algebraic dimension $1$ in the case when~\mbox{$\dim S=2$}, and whether one can construct
an example like this with an indecomposable vector bundle~$\EEE$.
\end{remark}

\smallskip
In Section~\ref{section:prelim} we recall the necessary auxiliary assertions.
In Section~\ref{section:uniruled} we recall the definitions and the basic properties of uniruled
and rationally connected manifolds, and also the basic properties of the maximal rationally connected fibration.
In Section~\ref{section:MRC}
we study the interaction between the maximal rationally connected fibration and the
group of bimeromorphic selfmaps of a compact complex manifold. In Section~\ref{section:conic-bundles}
we study the properties of conic bundles over non-algebraic compact complex surfaces.
Many results of this section are stated and proved in a more general form than we
actually need in the present paper; we hope that this will fill the existing gap in the literature.
In Section~\ref{section:a-0} we study the groups of bimeromorphic selfmaps of three-dimensional compact complex manifolds
fibred into rational curves over a surface of algebraic dimension~$0$.
In Section~\ref{section:a-1-1} we study the groups of bimeromorphic selfmaps of three-dimensional compact complex manifolds
fibred into rational curves over a surface of algebraic dimension~$1$, and complete the proof of Theorem~\ref{theorem:main}.

We use the following notation and conventions.
A \emph{complex manifold} is an irreducible smooth reduced complex space.
A \textit{morphism} is a holomorphic map of complex manifolds. By $\ab(X)$ we denote the algebraic dimension
of a compact complex manifold $X$, and by $\varkappa(X)$ we denote its Kodaira dimension.
For a complex manifold~$Z$ by its \emph{typical point}
we mean a point from a non-empty subset of the form~\mbox{$Z\setminus\Delta$},
where~$\Delta$ is a closed analytic subset in $Z$.
A typical fiber of a morphism of complex manifolds is defined in a similar way.

Let $\tau\colon X\dasharrow Y$ be a meromorphic map of complex manifolds, and let $\gamma\colon X\dasharrow X$ be a bimeromorphic map;
we will say that the action of $\gamma$ is \emph{fiberwise with respect to $\tau$}, if for every fiber $F$ of the map $\tau$
such that $\gamma$ is defined at least at one point of $F$, the image of every such point under $\gamma$
is again contained in~$F$.

If $X$ is a compact complex manifold, then
$\KKK_X$ will denote the canonical line bundle on $X$.
By~$\Omega_X^p$ we denote the vector bundle of holomorphic $p$-forms on~$X$.
The \emph{Hodge numbers}
$\h^{p,q}(X)$ are defined as the dimensions of the cohomology groups~\mbox{$H^q(X, \Omega^p_{X})$}.
In particular, if~\mbox{$\dim X=n$}, then the \emph{geometric genus} of $X$ is defined as~\mbox{$\pg(X)=\h^{n,0}(X)$}.

By $\pa(D)$ we denote the arithmetic genus of a projective scheme $D$.

\smallskip
We are grateful to T.\,Bandman, F.\,Campana, S.\,Nemirovski, and
Yu.\,Zarhin for useful discussions, as well as to the referee for
several important comments that helped us improve the manuscript.

\section{Preliminaries}
\label{section:prelim}

In this section we recall the necessary facts about complex manifolds and meromorphic maps.

Let us say that a group $\Gamma$ has \emph{bounded finite subgroups} if there exists a constant $B=B(\Gamma)$
such that the order of any finite subgroup of $\Gamma$ does not exceed~$B$.

\begin{lemma}
\label{lemma:group-theory}
Let
$$
1\longrightarrow\Gamma'\longrightarrow\Gamma\longrightarrow\Gamma''
$$
be an exact sequence of groups.
Suppose that the group $\Gamma''$ has bounded finite subgroups.
Then the group $\Gamma$ is Jordan (respectively, has bounded finite subgroups)
if and only if the group $\Gamma'$ is Jordan (respectively, has bounded finite subgroups).
\end{lemma}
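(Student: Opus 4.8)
The plan is to break the statement into the two implications and handle the two properties (Jordan, bounded finite subgroups) in parallel, using that a finite subgroup $G\subset\Gamma$ fits into an exact sequence with its image in $\Gamma''$ and its intersection with $\Gamma'$. Write $B=B(\Gamma'')$ for the bound on orders of finite subgroups of $\Gamma''$.

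First I would prove the "only if" direction, which is immediate and does not even use the hypothesis on $\Gamma''$: if $\Gamma$ is Jordan with constant $J$, then so is every subgroup — in particular $\Gamma'$ — with the same constant $J$, since for a finite $G\subset\Gamma'\subset\Gamma$ the normal abelian subgroup $A\subset G$ of index $\le J$ provided by Jordan property of $\Gamma$ lies in $\Gamma'$ automatically. The same argument works verbatim for bounded finite subgroups.

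For the "if" direction, let $G\subset\Gamma$ be a finite subgroup. Consider the image $\bar G$ of $G$ under $\Gamma\to\Gamma''$; it is a finite subgroup of $\Gamma''$, hence $|\bar G|\le B$. Let $G_0=G\cap\Gamma'$, so that we have the exact sequence $1\to G_0\to G\to\bar G\to 1$, and thus $|G|=|G_0|\cdot|\bar G|\le B\cdot|G_0|$, with $G_0$ a finite subgroup of $\Gamma'$. If $\Gamma'$ has bounded finite subgroups with constant $B'$, then $|G_0|\le B'$, so $|G|\le BB'$ and $\Gamma$ has bounded finite subgroups. If instead $\Gamma'$ is Jordan with constant $J'$, then $G_0$ has a normal abelian subgroup $A\subset G_0$ with $[G_0:A]\le J'$; here the one genuine subtlety is that $A$ need not be normal in $G$. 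To fix this, replace $A$ by the intersection $A^\sharp=\bigcap_{g\in G}gAg^{-1}$ of its conjugates in $G$; this is normal in $G$, still abelian, and its index in $G_0$ is bounded in terms of $J'$ alone. Indeed $A^\sharp$ contains the kernel of the natural map $G_0\to\prod_{g}G_0/(gAg^{-1})$, but one gets a cleaner bound by noting that conjugation by $g$ permutes the set of subgroups of $G_0$ of index $\le J'$, and there are at most some number $N=N(G_0,J')$ of these — however this still depends on $G_0$. The standard way around this is to instead take $A^\sharp$ to be the kernel of the action of $G_0$ by conjugation on the $G$-orbit of $A$, whose size is at most $[G:G_0]=|\bar G|\le B$: then $A^\sharp\subset A$ is abelian, normal in $G_0$; and crucially one checks $A^\sharp$ is in fact normal in $G$ because $G$ permutes the orbit and $A^\sharp$ is characterized orbit-theoretically — more simply, take the core $\mathrm{Core}_G(A)=\bigcap_{g\in G}gAg^{-1}$, which is normal in $G$ by construction and abelian as a subgroup of $A$, and whose index satisfies $[G_0:\mathrm{Core}_G(A)]\le\prod_{g\in T}[G_0:gAg^{-1}]\le (J')^{B}$ where $T$ is a set of coset representatives of $G_0$ in $G$ of size $\le B$. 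Hence $[G:\mathrm{Core}_G(A)]=[G:G_0][G_0:\mathrm{Core}_G(A)]\le B\cdot(J')^{B}=:J$, a constant independent of $G$, and $\mathrm{Core}_G(A)$ is the required normal abelian subgroup of $G$. Therefore $\Gamma$ is Jordan.

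The main obstacle, as indicated above, is exactly this last point: passing from a normal abelian subgroup of $G_0=G\cap\Gamma'$ to one that is normal in all of $G$, while keeping the index bounded by a constant depending only on $J'$ and $B$ (and not on $G$). The key input that makes this work is that the number of conjugates $gAg^{-1}$ we intersect is controlled by $[G:G_0]=|\bar G|$, which the hypothesis on $\Gamma''$ bounds uniformly by $B$; so the core of $A$ in $G$ has index at most $(J')^{B}$ in $G_0$, giving the uniform bound $J=B\cdot(J')^{B}$.
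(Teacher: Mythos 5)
Your proof is correct. The paper's own proof of this lemma is literally the single word ``Obvious,'' so there is nothing to compare against beyond noting that your argument is the standard one the authors surely had in mind: restrict to $G_0=G\cap\Gamma'$, use that $[G:G_0]=|\bar G|\le B$, and in the Jordan case replace the normal abelian subgroup $A\trianglelefteq G_0$ by its core $\bigcap_{g\in G}gAg^{-1}$ in $G$. The one small point worth stating explicitly (you use it implicitly when you pass to coset representatives $T$): since $A$ is normal in $G_0$, the conjugate $gAg^{-1}$ depends only on the coset $gG_0$, so there are at most $B$ distinct conjugates and the index bound $[G_0:\operatorname{Core}_G(A)]\le (J')^{B}$ follows; your final constant $J=B\cdot(J')^{B}$ is exactly what this gives. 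The meandering in the middle of your write-up (the aborted attempts before settling on the core) could simply be deleted.
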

\begin{proof}
Obvious.
\end{proof}

We will need some basic facts about bimeromorphic geometry of complex manifolds. The reader can find more details
on this in \cite{Ueno1975}
and \cite{several-complex-variables-7}.

\begin{definition}
A proper morphism $f\colon X\to Y$ of (not necessary compact) complex manifolds is called a (proper) \emph{modification},
if there exist closed analytic subsets
$V\subsetneq X$ and~\mbox{$W \subsetneq Y$} such that $f$ induces an isomorphism
$X\setminus V\cong Y\setminus W$.
\end{definition}

\begin{definition}
Let $X$ and $Y$ be complex manifolds.
A \emph{meromorphic map} $f\colon X \dashrightarrow Y$ is a map
$X\to 2^Y$ from the set $X$ to the set $2^Y$ of subsets of $Y$ such that its graph
\[
\Graph_f=\{ (x,y) \mid y\in f(x)\} \subset X\times Y
\]
is an irreducible closed analytic subset of the complex manifold $X\times Y$,
and the first projection
$$
\operatorname{pr}_X\colon \Graph_f\to X
$$
is a modification.
For every meromorphic map~\mbox{$f\colon X \dashrightarrow Y$}
there exists the minimal closed analytic subset $V\subset X$
such that the restriction~$f|_{X\setminus V}$ is holomorphic.
This subset is called the \emph{indeterminacy locus} of the map~$f$.
A \emph{typical fiber} of $f$ is a typical fiber of the projection
$$
\operatorname{pr}_Y\colon \Graph_f\to Y.
$$
A meromorphic map is called \emph{bimeromorphic} if the projection $\operatorname{pr}_Y$
is also a modification. The set of all bimeromorphic maps $X \dashrightarrow X$ is a group
which we will denote by~\mbox{$\Bim(X)$}.
\end{definition}

Note that in the case when $X$ and $Y$ are smooth complex projective algebraic varieties,
according to the GAGA principle the graph~\mbox{$\Graph_f$}
of a meromorphic map $f\colon X \dashrightarrow Y$
is an algebraic subvariety of~\mbox{$X\times Y$}, and thus $f$ is also a rational map in this case. In particular,
for a smooth complex projective algebraic variety~$X$ the
group~\mbox{$\Bim(X)$}
defined above coincides with the group of birational automorphisms of~$X$.

It is clear that a bimeromorphic map $f\colon X \dashrightarrow Y$ of compact complex manifolds induces an isomorphism of the fields of meromorphic functions
$$
f^*\colon \MMM(Y)\stackrel{\sim}\longrightarrow \MMM(X).
$$
However, unlike the algebraic case, such an isomorphism does not usually
define the map $f$.

Recall that a complex manifold is said to be \emph{K\"ahler},
if it has a Hermitian metric such that the corresponding $(1,1)$-form $\omega$ is closed; in this case $\omega$ is called the \emph{K\"ahler form}.
All complex tori are K\"ahler manifolds. Examples of K\"ahler manifolds covered by rational curves can be obtained from
the following statement.

\begin{theorem}[{\cite[Proposition~3.18]{Voisin2007a}}]
\label{theorem:Voisin}
Let $X$ be a compact K\"ahler manifold, and let $\EEE$ be a holomorphic vector bundle on~$X$.
Then the projectivization of $\EEE$ is a K\"ahler manifold.
\end{theorem}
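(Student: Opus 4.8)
The plan is to write down an explicit K\"ahler form on $Y:=\PP(\EEE)$, obtained by perturbing the pullback of a K\"ahler form on the base with the curvature of the relative hyperplane bundle. Put $r=\rk\EEE$ and let $\pi\colon Y\to X$ be the projection; it is a locally trivial holomorphic fiber bundle with fiber $\PP^{r-1}$, and $Y$ is compact because $X$ is compact and $\pi$ is proper. Write $\OOO_Y(1)$ for the relative hyperplane bundle. First I would fix a Hermitian metric $h$ on $\EEE$ and a K\"ahler form $\omega_X$ on $X$. The metric $h$ induces in the usual way a Hermitian metric on $\OOO_Y(1)$ whose restriction to a fiber $\pi^{-1}(x)\cong\PP(\EEE_x)$ is the Fubini--Study metric attached to the Hermitian inner product $h|_{\EEE_x}$ on $\EEE_x\cong\CC^r$; let $\Theta$ be the associated Chern form, which is a closed real $(1,1)$-form on $Y$. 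The feature I would exploit is that the restriction of $\Theta$ to each fiber $\pi^{-1}(x)$ is the corresponding Fubini--Study form, hence strictly positive in the fiber directions.

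Next I would set
\[
\omega_\varepsilon:=\pi^*\omega_X+\varepsilon\,\Theta,\qquad \varepsilon>0 .
\]
Since $\pi^*\omega_X$ and $\Theta$ are both closed, $\omega_\varepsilon$ is closed, so the whole content is to show that $\omega_\varepsilon$ is positive definite everywhere once $\varepsilon$ is small enough. I would argue by compactness. Fix an auxiliary Hermitian metric on $Y$ and let $S\subset T_Y$ be the associated (compact) unit sphere bundle; regard $\pi^*\omega_X$ and $\Theta$ as Hermitian forms on the fibers of $T_Y$. The form $\pi^*\omega_X$ is semipositive, being the pullback of a positive form, and $\pi^*\omega_X(v,\bar v)=0$ precisely when $v$ is tangent to a fiber of $\pi$. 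On the compact set $S_{\mathrm v}\subset S$ of vertical unit vectors one has $\Theta(v,\bar v)>0$, so by continuity there exist $\delta>0$ and an open neighborhood $U\supset S_{\mathrm v}$ in $S$ with $\Theta(v,\bar v)\ge\delta$ on $U$. On the compact complement $S\setminus U$ the continuous nonnegative function $v\mapsto\pi^*\omega_X(v,\bar v)$ vanishes nowhere, hence is bounded below by some $c>0$; and $|\Theta(v,\bar v)|\le M$ on all of $S$ for some $M>0$. It follows that for $0<\varepsilon<c/M$ and every $v\in S$ we have $\omega_\varepsilon(v,\bar v)\ge\varepsilon\delta>0$ if $v\in U$ and $\omega_\varepsilon(v,\bar v)\ge c-\varepsilon M>0$ otherwise. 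Thus $\omega_\varepsilon$ is a K\"ahler form and $Y$ is a compact K\"ahler manifold.

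The only genuinely delicate step is this uniform choice of $\varepsilon$: the form $\Theta$ need not be positive on all of $Y$, since it can have negative eigenvalues in the horizontal and mixed directions, so one has to make its fiberwise positivity outweigh those contributions simultaneously at every point of $Y$. This is exactly what the compactness of $Y$ together with the splitting of the sphere bundle into a neighborhood $U$ of the vertical directions (where $\Theta$ dominates) and its complement (where $\pi^*\omega_X$ dominates) provides; the rest is formal.
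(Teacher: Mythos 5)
Your argument is correct and is essentially the proof of the result the paper cites (the paper gives no proof of its own, referring to \cite[Proposition~3.18]{Voisin2007a}): one takes the Chern form $\Theta$ of $\OOO_{\PP(\EEE)}(1)$ for the metric induced by a Hermitian metric on $\EEE$, which restricts to the Fubini--Study form on fibers, and shows by compactness that $\pi^*\omega_X+\varepsilon\Theta$ is positive for small $\varepsilon>0$. Your handling of the only delicate point --- the uniform choice of $\varepsilon$ via the splitting of the unit sphere bundle into a neighborhood of the vertical directions and its complement --- is exactly right.
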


We need the following sufficient condition
for algebraicity of K\"ahler manifolds.

\begin{proposition}\label{prop:Kahler-proj}
Let $X$ be a compact K\"ahler manifold.
Assume that~\mbox{$H^0(X,\Omega_X^2)=0$}. Then $X$ is a projective
algebraic  variety.
\end{proposition}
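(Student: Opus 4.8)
The plan is to invoke the Kodaira embedding theorem after first passing to a projective model via a well-chosen pluricanonical argument, but the cleanest route actually goes through the theory of the algebraic reduction together with Kodaira's characterization of algebraic K\"ahler manifolds. The key classical input is the following: a compact K\"ahler manifold $X$ is projective algebraic if and only if its algebraic dimension $\ab(X)$ equals $\dim X$, equivalently if and only if $X$ carries a Hodge class, i.e.\ a K\"ahler class that is rational; this is essentially Kodaira's projectivity criterion. So it suffices to produce such a class from the hypothesis $H^0(X,\Omega_X^2)=0$.

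First I would recall the Hodge decomposition $H^2(X,\CC)=H^{2,0}(X)\oplus H^{1,1}(X)\oplus H^{0,2}(X)$, valid because $X$ is compact K\"ahler. By hypothesis $H^{2,0}(X)=H^0(X,\Omega_X^2)=0$, and by Hodge symmetry (complex conjugation interchanges $H^{2,0}$ and $H^{0,2}$) we also get $H^{0,2}(X)=0$. Hence $H^2(X,\CC)=H^{1,1}(X)$, and therefore the natural map $H^2(X,\ZZ)\to H^2(X,\CC)$ has image a lattice of full rank in $H^{1,1}(X)$; in particular $H^2(X,\QQ)$ is dense in $H^{1,1}(X)=H^2(X,\RR)$. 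Now take the K\"ahler class $[\omega]\in H^2(X,\RR)$: since the K\"ahler cone is open in $H^{1,1}(X,\RR)$ and rational classes are dense there, we may approximate $[\omega]$ by a class $\alpha\in H^2(X,\QQ)$ lying in the K\"ahler cone. Clearing denominators, a suitable integral multiple of $\alpha$ is an integral K\"ahler class, i.e.\ the first Chern class of an ample line bundle; by the Kodaira embedding theorem $X$ is projective.

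The main obstacle, to the extent there is one, is simply citing the right form of the ingredients: one needs (a) the Hodge decomposition and Hodge symmetry on a compact K\"ahler manifold, (b) openness of the K\"ahler cone inside $H^{1,1}(X,\RR)$, and (c) the Kodaira embedding theorem in the form that an integral K\"ahler class is the Chern class of an ample bundle. All three are standard (see e.g.\ \cite{Voisin2007a}); the genuinely used hypothesis $H^{2,0}(X)=0$ enters only to force $H^2(X,\QQ)$ to be Zariski-dense in $H^{1,1}(X,\RR)$, which is exactly what is missing in general and what prevents an arbitrary compact K\"ahler manifold from being projective. I expect no serious computation: the argument is a short chain of citations once the density statement is isolated.
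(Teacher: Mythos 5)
Your proposal is correct and follows essentially the same route as the paper: use Hodge symmetry to get $H^2(X,\CC)=H^{1,1}(X)$, approximate the K\"ahler class by a rational one in the open K\"ahler cone, and conclude by Kodaira's projectivity criterion. The paper states this in one line (asserting that the K\"ahler class can be taken integral, so $X$ is a Hodge manifold); you have merely filled in the density-and-openness step that the paper leaves implicit.
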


\begin{proof}
In this case $H^2(X,\CC)=H^{1,1}(X)$ and so the $(1,1)$-form associated
with a K\"ahler metric is integral (i.\,e. $X$ is a Hodge manifold).
Therefore, it is projective by the Kodaira criterion \cite{Kodaira1954}.
\end{proof}

We will also use one general sufficient condition for algebraicity.
Recall that an $n$-dimensional complex manifold is called
\emph{Moishezon} if the transcendence degree of its field of meromorphic
functions equals~$n$. It is well-known that a compact K\"ahler Moishezon
manifold is a projective algebraic variety,
see~\mbox{\cite[Theorem~11]{Moishezon-1967eng}}.

\begin{lemma}[{see \cite[Proposition~12.2]{Ueno1975}}]
\label{lemma:projective-easy}
Let $X$ be a compact complex manifold, $Y$ be a Moishezon manifold,
and $h\colon X\to Y$ be a morphism. Assume that a typical fiber of $h$ is
a  rational curve.
Then the manifold~$X$ is Moishezon. If moreover $X$ is K\"ahler, then
it is a projective algebraic variety.
\end{lemma}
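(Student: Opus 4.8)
The plan is to reduce the Moishezon property of $X$ to that of $Y$ by exhibiting enough meromorphic functions, and then to invoke Moishezon's theorem in the Kähler case. First I would note that since $Y$ is Moishezon and the question is bimeromorphic in nature, we may pass to a modification and assume $Y$ is projective; by Hironaka's resolution (in the analytic category, via the flattening of $h$) we may further replace $X$ by a modification so that $h$ becomes a flat morphism with generic fiber a smooth rational curve, and then, after a base change and another modification, we may assume $h\colon X\to Y$ is a $\PP^1$-bundle in the Zariski-local sense over a dense open subset. At this point the key observation is that the generic fiber $X_\eta$ of $h$, viewed over the function field $\CC(Y)$, is a smooth conic, hence a Brauer–Severi curve over $\CC(Y)$; I would want to show this conic has a rational point so that $X_\eta\cong\PP^1_{\CC(Y)}$, which gives $\MMM(X)\cong\CC(Y)(t)$ and hence $\operatorname{tr.deg}_\CC\MMM(X)=\dim Y+1=n$, i.e.\ $X$ is Moishezon.

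The cleanest way to get the rational point — and the step I expect to be the main obstacle — is to produce a meromorphic multisection of $h$ of odd degree over $Y$, or more directly to use that the typical fiber being a smooth rational curve forces the conic bundle to be birationally trivial over a Moishezon (hence, after reduction, projective) base. One route: the relative anticanonical bundle $\KKK_X^{-1}$ restricts to $\OOO_{\PP^1}(2)$ on a typical fiber, so its pushforward along $h$ is generically a rank-$3$ locally free sheaf whose projectivization gives an embedding of $X$ (over a dense open of $Y$) as a conic bundle in a $\PP^2$-bundle; the discriminant locus is then a divisor on $Y$, and the Brauer class it defines in $\operatorname{Br}(\CC(Y))$ is $2$-torsion and split over the ramification divisor — but over a field of transcendence degree one could have residues, so one genuinely needs the hypothesis that the \emph{typical} fiber is $\PP^1$ over $\CC$ (not merely a twisted form). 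Concretely, the generic fiber of $h$ is a curve over $\CC(Y)$ whose base change to the algebraic closure is $\PP^1$; since it is already $\PP^1$ over a dense Zariski-open of $Y$ with its reduced complex structure, it carries a divisor of degree $1$ (a section over that open set), which descends to a degree-$1$ zero-cycle on $X_\eta$, forcing $X_\eta\cong\PP^1_{\CC(Y)}$ by Riemann–Roch for genus-zero curves.

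Granting $X_\eta\cong\PP^1_{\CC(Y)}$, we conclude $\MMM(X)$ is a purely transcendental extension of $\MMM(Y)=\CC(Y)$ of transcendence degree one, so $\operatorname{tr.deg}_\CC\MMM(X)=n$ and $X$ is Moishezon. Finally, if $X$ is in addition Kähler, then $X$ is a compact Kähler Moishezon manifold, hence a projective algebraic variety by Moishezon's theorem (\cite[Theorem~11]{Moishezon-1967eng}), as recalled in the paragraph preceding the statement. I would present the argument by first doing the case where $h$ is already a $\PP^1$-bundle and then reducing the general case to it via flattening and resolution of indeterminacy, keeping in mind that all the manifolds involved are allowed to be modified bimeromorphically since both ``Moishezon'' and ``projective'' are bimeromorphic invariants in the Kähler setting.
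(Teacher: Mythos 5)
The paper does not prove this lemma at all: it is quoted directly from Ueno (\cite[Proposition~12.2]{Ueno1975}), with the K\"ahler refinement following from Moishezon's theorem as recalled just above the statement. So the only comparison to make is with the standard argument, and your core tool is indeed the right one: after modifications making $Y$ projective and $h$ generically a smooth $\PP^1$-fibration, the sheaf $h_*\KKK_{X/Y}^{-1}$ is generically locally free of rank $3$ and embeds $X$ bimeromorphically as a conic bundle inside a $\PP^2$-bundle over a projective base; that image is a closed analytic, hence (by GAGA/Chow) algebraic, subvariety of a projective variety, so $X$ is Moishezon, and the K\"ahler case then follows from \cite[Theorem~11]{Moishezon-1967eng}. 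Had you stopped there, the proof would be complete.

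The genuine problem is that you instead route the conclusion through the claim that the generic fibre $X_\eta$ has a rational point over $\CC(Y)$, justified by saying that since each fibre over a dense open set of $Y$ is a reduced $\PP^1$, the family ``carries a divisor of degree $1$ (a section over that open set).'' This is false: a smooth conic bundle all of whose closed fibres are isomorphic to $\PP^1$ need not admit any meromorphic section --- the obstruction is a possibly nontrivial $2$-torsion Brauer class on (an open part of) $Y$, and standard examples of conic bundles over surfaces with nontrivial Brauer class show that fibrewise triviality does not descend to a degree-$1$ zero-cycle on the generic fibre. Consequently the intermediate assertion $\MMM(X)\cong\CC(Y)(t)$ is also false in general. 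Fortunately none of this is needed: whether or not the conic $X_\eta$ has a rational point, its function field has transcendence degree $1$ over $\MMM(Y)$ (any nonconstant ratio of coordinates of the anticanonical embedding already gives a meromorphic function on $X$ that is nonconstant on the typical fibre), so $\operatorname{tr.deg}_{\CC}\MMM(X)=\dim Y+1=\dim X$ in any case. You should excise the Brauer--Severi/rational-point detour and conclude directly from the conic-bundle embedding.
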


Automorphism groups of K\"ahler manifolds have nice properties.

\begin{theorem}[{\cite{Kim}}]
\label{theorem:aut}
Let $X$ be a compact K\"ahler manifold. Then the group $\Aut(X)$ is Jordan.
\end{theorem}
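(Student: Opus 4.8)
The plan is to decompose $\Aut(X)$ via its action on cohomology, treat the identity component separately, and reassemble the pieces using Lemma~\ref{lemma:group-theory}. Recall first that for any compact complex manifold $X$ the group $\Aut(X)$ is a complex Lie group, with Lie algebra $H^0(X,T_X)$.

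I would begin with the identity component $\Aut^0(X)$. It is a connected complex Lie group, so by the Cartan--Iwasawa--Malcev theorem every compact, in particular every finite, subgroup of it is contained up to conjugacy in a fixed maximal compact subgroup $K$. Since $K$ is a compact Lie group, Peter--Weyl embeds it into some $\mathrm{U}(N)$, and hence $K$ is Jordan by the classical theorem of Jordan recalled in the introduction. Consequently $\Aut^0(X)$ is Jordan, with Jordan constant that of $K$. (Note that this step does not yet use the K\"ahler hypothesis.)

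Next let $\Aut_\tau(X)\subseteq\Aut(X)$ be the kernel of the action of $\Aut(X)$ on $H^2(X,\ZZ)$; it contains $\Aut^0(X)$ because a connected group acts trivially on this discrete lattice. Here the K\"ahler assumption enters: by the results of Lieberman and Fujiki --- concretely, the compactness of the cycle space of a compact K\"ahler manifold, applied to the graphs in $X\times X$ of automorphisms in $\Aut_\tau(X)$ --- the group $\Aut_\tau(X)$ has only finitely many connected components, i.e.\ $\Aut_\tau(X)/\Aut^0(X)$ is finite. A finite extension of a Jordan group is again Jordan (replace a bounded-index abelian subgroup by its normal core), so $\Aut_\tau(X)$ is Jordan. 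Meanwhile the quotient $\Aut(X)/\Aut_\tau(X)$ embeds into $\GL\!\big(H^2(X,\ZZ)\big)\cong\GL_{b}(\ZZ)$ with $b=b_2(X)$, whose finite subgroups have order bounded solely in terms of $b$ by Minkowski's theorem; thus $\Aut(X)/\Aut_\tau(X)$ has bounded finite subgroups.

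Finally I would invoke Lemma~\ref{lemma:group-theory} for the exact sequence
$$
1\longrightarrow\Aut_\tau(X)\longrightarrow\Aut(X)\longrightarrow\Aut(X)/\Aut_\tau(X)\longrightarrow 1,
$$
whose kernel is Jordan and whose quotient has bounded finite subgroups; this yields that $\Aut(X)$ is Jordan. The main obstacle is the Hodge-theoretic input of the previous paragraph, the finiteness of $\Aut_\tau(X)/\Aut^0(X)$, which is precisely where K\"ahlerness is essential and which fails for general compact complex manifolds; a secondary technical point is ensuring one really works with the \emph{connected} group $\Aut^0(X)$, so that the Cartan--Iwasawa--Malcev reduction to a single maximal compact subgroup is legitimate.
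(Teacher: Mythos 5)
The paper does not prove this statement at all --- it is imported verbatim from \cite{Kim} --- and your argument is a correct reconstruction of the standard proof given there (and in the related work of Meng--Zhang): Jordanness of the identity component via maximal compact subgroups, Lieberman--Fujiki finiteness of $\Aut_\tau(X)/\Aut^0(X)$ from boundedness of the volumes of graphs fixing a K\"ahler class, a Minkowski-type bound on the image in $\GL\bigl(H^2(X,\ZZ)\bigr)$, and reassembly via Lemma~\ref{lemma:group-theory}. The only nitpick is that $H^2(X,\ZZ)$ may have torsion, so the quotient embeds in $\Aut\bigl(H^2(X,\ZZ)\bigr)$ rather than literally in $\GL_b(\ZZ)$; this does not affect the boundedness of its finite subgroups.
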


Jordan property is also known to hold for automorphism groups of certain
special compact complex manifolds.

\begin{lemma}[{see \cite[Corollary~5.9]{Shramov-Fiberwise}}]
\label{lemma:P1-bundle-over-a-torus}
Let $S$ be a complex torus. Let $X$ be a compact complex manifold, and let $\tau\colon X\to S$~ be a flat surjective morphism
whose typical fiber is isomorphic to~$\PP^1$.
Suppose that $X$ is not bimeromorphic to a projectivization of a holomorphic vector bundle of rank $2$ on $S$.
Then the group $\Bim(X)$ is Jordan.
\end{lemma}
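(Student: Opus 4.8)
The plan is to analyse the homomorphism
$$
\rho\colon \Bim(X)\longrightarrow \Aut(S),
$$
defined as follows. Since $S$ is a complex torus it carries no rational curves, so a general fibre $F\cong\PP^1$ of $\tau$ is mapped by any $\gamma\in\Bim(X)$ onto a rational curve of $X$, which is necessarily a fibre of $\tau$; hence $\gamma$ descends to a bimeromorphic selfmap $\bar\gamma$ of $S$, and $\bar\gamma$ is a biholomorphism because bimeromorphic selfmaps of a complex torus are automorphisms. The kernel $K$ of $\rho$ is precisely the group of maps acting fiberwise with respect to $\tau$, and restriction to a general fibre embeds $K$ into the group $\Aut_k(C)$ of $k$-automorphisms of the generic fibre $C$ of $\tau$, a smooth genus-zero curve over $k:=\MMM(S)$ (cf.\ Section~\ref{section:conic-bundles}).

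First I would dispose of the case in which $C$ has a $k$-point. Then $\tau$ has a meromorphic section, and the $\PP^1$-bundle-with-section structure of $\tau$ over the complement of its degeneracy locus extends to a projectivization of a holomorphic rank-$2$ bundle on all of $S$; thus $X$ is bimeromorphic to such a projectivization, contrary to the hypothesis. Hence $C$ is a nonsplit conic, so $\Aut_k(C)\cong\PGL_1(D)(k)=D^{\times}/k^{\times}$ for a quaternion division algebra $D$ over~$k$.

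The crux is to prove that $\PGL_1(D)(k)$, and hence $K$, has bounded finite subgroups. Let $x\in D^{\times}$ with $x^{n}\in k^{\times}$ for some $n\ge1$. As $D$ is a division algebra, $L:=k(x)$ is a subfield with $[L:k]\le 2$; when $[L:k]=2$ let $\sigma$ generate $\mathrm{Gal}(L/k)$. From $x^{n}\in k^{\times}$ one gets $(\sigma(x)/x)^{n}=1$, so $\zeta:=\sigma(x)/x$ is a root of unity with $\sigma(\zeta)=\zeta^{-1}$; but any root of unity is algebraic over $\QQ$, hence lies in $\CC\subseteq k$, whence $\sigma(\zeta)=\zeta$ and $\zeta=\pm1$, so $x^{2}\in k^{\times}$. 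Therefore every finite-order element of $D^{\times}/k^{\times}$ has order at most $2$, so every finite subgroup of $\PGL_1(D)(k)$ is an elementary abelian $2$-group; being contained in $\PGL_2(\overline k)$, whose finite subgroups of exponent $\le 2$ have order $\le 4$, such a subgroup has order at most $4$. In particular $|G\cap K|\le 4$ for every finite subgroup $G\subseteq\Bim(X)$.

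Finally I would derive the Jordan property. Let $G\subseteq\Bim(X)$ be finite. Since $\Aut(S)=S\rtimes\Aut(S,0)$ and $\Aut(S,0)$ embeds into $\GL(H_{1}(S,\ZZ))\cong\GL_{4}(\ZZ)$, which has bounded finite subgroups by Minkowski's theorem, after passing to a subgroup of bounded index we may assume $\rho(G)$ lies in the subgroup $S$ of translations; then $\rho(G)$ is a finite abelian group of rank $\le 4$, so $[G,G]\subseteq G\cap K$ has order $\le 4$ and $G$ is generated by at most six elements. Replacing $G$ by the centralizer of $[G,G]$ in $G$ --- again of bounded index --- we may further assume $G$ is nilpotent of class $\le 2$ with $[G,G]$ central; then for generators $x_{1},\dots,x_{d}$ of $G$ the map $g\mapsto\bigl([x_{1},g],\dots,[x_{d},g]\bigr)$ is a homomorphism $G\to[G,G]^{d}$ with kernel $Z(G)$, and since $d$ is bounded (Schreier's inequality) and $|[G,G]|\le4$, the centre $Z(G)$ is abelian of bounded index. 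Taking its normal core in the original $G$ produces a normal abelian subgroup of bounded index, so $\Bim(X)$ is Jordan. I expect the two ingredients of the second and third paragraphs to be the main obstacle: the complex-analytic statement that a meromorphic section promotes $X$ to a projectivized rank-$2$ bundle \emph{on $S$ itself}, and the arithmetic boundedness of $\PGL_1(D)(k)$ for a field $k\supseteq\CC$.
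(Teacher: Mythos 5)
The paper does not actually prove this lemma: it is imported wholesale from \cite[Corollary~5.9]{Shramov-Fiberwise}, so there is no internal argument to compare with. Your outline does resemble the strategy of that reference at a high level (bound the fiberwise part by a torsion computation in a quaternion algebra, then handle the image in $\Aut(S)$ via the structure of torus automorphisms), and two of your three blocks are sound: the computation showing that every torsion element of $D^{\times}/k^{\times}$ satisfies $x^{2}\in k^{\times}$ when $k\supseteq\CC$, hence that finite subgroups have order at most $4$, is correct; and the concluding group theory (Minkowski for $\GL_{2g}(\ZZ)$, bounded commutator subgroup, centre of bounded index, normal core) is fine.

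The genuine gap is the geometric reduction in your first two paragraphs: you treat $\tau$ as if it had a generic fibre which is a smooth conic $C$ over $k=\MMM(S)$, with $K=\Bim(X)_{\tau}$ embedding into $\Aut_k(C)$ and with $C(k)\neq\emptyset$ forcing a meromorphic section. This is exactly what fails when $S$ is non-algebraic, which is the only case the paper needs the lemma for. If $\ab(S)<\dim S$ then $\MMM(S)$ has transcendence degree $\ab(S)$ and the fibration does not descend to any object defined over $k$; in the extreme case $\ab(S)=0$ one has $k=\CC$, so ``the conic $C$'' would be $\PP^1_{\CC}$, which always has a $k$-point --- yet there exist flat $\PP^1$-fibrations over such tori (with nontrivial analytic Brauer class) that admit no meromorphic section and are not bimeromorphic to any $\PP_S(\EEE)$. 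Your dichotomy would declare the hypothesis of the lemma vacuous in that case, which it is not. Likewise, restricting a fiberwise map to a single general fibre only produces an element of $\PGL_2(\CC)$, not of $\Aut_k(C)$; to get a $k$-rational automorphism you need the whole family to be defined over $k$. The missing idea is the correct substitute for ``the generic fibre'': one must work with the sheaf of relative (anti)canonical algebras of the conic bundle and its algebra of \emph{meromorphic} sections over $\MMM(S)$, prove that this algebra fails to be a division algebra exactly when $X$ is bimeromorphic over $S$ to a projectivization, and only then run your torsion argument. That analytic input is the actual content of \cite{Shramov-Fiberwise}, and your proposal assumes it rather than proves it.
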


Throughout the paper we will frequently use the notion of the \emph{algebraic
reduction} of a compact complex manifold. For such a manifold~$X$,
the algebraic reduction is a meromorphic map $X\dasharrow Y$ with connected fibers to a projective variety
$Y$ of dimension $\dim Y=\ab(X)$ such that the fields of meromorphic functions of $X$ and $Y$ are isomorphic to
each other; we refer the reader to \cite[\S\,I.3]{Ueno1975} for details. Note that in the case when $\dim X=2$,
the algebraic reduction is a holomorphic  map, and its typical fiber is an elliptic curve provided that~\mbox{$\ab(X)=1$}, see~\mbox{\cite[Proposition~VI.5.1]{BHPV-2004}}.

We will need some auxiliary assertions about compact complex surfaces.
The following fact is well-known.

\begin{lemma}[{see for instance \cite[Lemma~2.1]{ProkhorovShramov-SurfacesBFS}}]
\label{lemma:two-intersecting-curves}
Let $S$ be a compact complex surface. Suppose that $S$ contains two
divisors $C_1$ and $C_2$ such that $C_1^2\ge 0$ and~\mbox{$C_1\cdot C_2>0$}.
Then the surface~$S$ is algebraic.
\end{lemma}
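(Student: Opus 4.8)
The plan is to produce a line bundle on $S$ whose first Chern class has positive self-intersection, and then invoke the fact that a compact complex surface carrying such a class is necessarily algebraic (this is essentially the content of the Kodaira criterion / the observation that $\ab(S)=2$ forces $S$ to be Moishezon, hence projective). Concretely, I would work with the $\QQ$-vector space $H^{1,1}(S,\RR)$ equipped with the intersection form and consider the divisor classes $[C_1]$ and $[C_2]$. First I would treat the boundary case $C_1^2=0$: since $C_1\cdot C_2>0$, the restriction of the intersection form to the span of $[C_1]$ and $[C_2]$ has matrix $\left(\begin{smallmatrix} 0 & C_1\cdot C_2 \\ C_1\cdot C_2 & C_2^2\end{smallmatrix}\right)$, whose determinant $-(C_1\cdot C_2)^2$ is strictly negative, so this plane is nondegenerate of signature $(1,1)$; hence it contains a class of positive self-intersection, and rescaling to clear denominators we obtain an integral class $D$ with $D^2>0$. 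In the case $C_1^2>0$ there is nothing to do: $D=C_1$ already works.

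Once we have an integral divisor class $D$ with $D^2>0$, I would argue that $S$ is algebraic as follows. By Riemann--Roch on the compact complex surface $S$,
\[
\h^0(S,\OOO_S(nD))+\h^0(S,\OOO_S(\KKK_S-nD))\ge \chi(\OOO_S)+\tfrac{1}{2}\,nD\cdot(nD-\KKK_S),
\]
and the right-hand side grows like $\tfrac12 n^2 D^2\to\infty$. For large $n$ the term $\h^0(S,\OOO_S(\KKK_S-nD))$ vanishes, because $(\KKK_S-nD)\cdot D<0$ for $n\gg 0$ while an effective divisor meeting the class $D$ of positive self-intersection negatively cannot exist once we also know $D$ is "positive enough" — more carefully, if $\h^0(S,\OOO_S(\KKK_S-nD))>0$ for infinitely many $n$ then $\KKK_S-nD$ is effective for such $n$, and intersecting with the growing linear system $|mD|$ yields a contradiction. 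Hence $\h^0(S,\OOO_S(nD))$ grows quadratically in $n$, so the meromorphic function field of $S$ has transcendence degree $2$, i.e. $\ab(S)=2$ and $S$ is Moishezon; a Moishezon surface is projective algebraic.

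The main obstacle is the clean verification that the linear system $|nD|$ actually separates enough points to give $\ab(S)=2$ rather than merely producing many sections of a single line bundle; equivalently, one must rule out the pathology in which all the sections of $\OOO_S(nD)$ have a common base component. This is handled by the standard argument that a base component would be a curve $B$ with $B\cdot D\le 0$, whereas writing $D\equiv aB+D'$ and using $D^2>0$ together with the Hodge index theorem (valid on any compact complex surface for the intersection form restricted to $H^{1,1}$, once one class has positive square) forces $B^2<0$ and pins down $B$, after which only finitely many such $B$ can occur and they can be removed. Since this is precisely the content of the cited references, I would simply quote \cite[Lemma~2.1]{ProkhorovShramov-SurfacesBFS} (or the analogous statement via the Kodaira criterion) rather than reproduce the full chain of inequalities.
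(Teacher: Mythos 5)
The paper does not actually prove this lemma --- it is quoted verbatim from \cite[Lemma~2.1]{ProkhorovShramov-SurfacesBFS} with no argument given --- so the comparison is really with the standard proof in that reference, and your proposal reproduces it: combine $C_1$ and $C_2$ into a single divisor class $D$ with $D^2>0$ (most directly, $D=nC_1+C_2$ for $n\gg 0$, since $D^2=n^2C_1^2+2nC_1\cdot C_2+C_2^2>0$; your signature computation on the plane spanned by $[C_1]$ and $[C_2]$ is a slightly roundabout way of saying the same thing), and then invoke the criterion that a compact complex surface carrying a line bundle of positive self-intersection is projective, which is \cite[Theorem~IV.6.2]{BHPV-2004}. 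That reduction plus citation is a complete and correct proof. The only caveat concerns your middle paragraph, where you sketch the proof of the criterion itself: the assertion that an effective divisor cannot meet a class of positive square negatively is false in general (components of $\Supp D$ with negative square can do exactly that), and the argument ``intersect $\KKK_S-nD$ with the growing system $|mD|$'' presupposes the nonvanishing you are trying to establish. These gaps are harmless only because you ultimately defer that step to the standard reference rather than relying on the sketch; if you intend to cite \cite[Theorem~IV.6.2]{BHPV-2004} (rather than the lemma being proved, which would be circular), the sketch can simply be dropped.
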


\begin{corollary}\label{corollary:two-intersecting-curves}
Let $S$ be a compact complex surface of algebraic dimension~$1$.
Then all the curves on $S$ are contained in the fibers
of its algebraic reduction.
\end{corollary}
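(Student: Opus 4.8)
The plan is to argue by contradiction, using Lemma~\ref{lemma:two-intersecting-curves}. Since $\ab(S)=1$, the algebraic reduction of $S$ is a holomorphic map $\phi\colon S\to B$ with connected fibers onto a smooth projective curve $B$, and its typical fiber is an elliptic curve (see the discussion of the algebraic reduction above). In particular, a typical fiber $F$ of $\phi$ is an irreducible curve on $S$; moreover, since the fibers of $\phi$ over two distinct points of $B$ are disjoint and numerically equivalent, one has $F^2=0$.

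Suppose now that some irreducible curve $C\subset S$ is not contained in a fiber of $\phi$. Then $\phi|_C\colon C\to B$ is non-constant, hence surjective, so $C$ meets a typical fiber $F$ in finitely many points, which gives $C\cdot F>0$. Applying Lemma~\ref{lemma:two-intersecting-curves} with $C_1=F$ and $C_2=C$, where $C_1^2=F^2=0\ge 0$ and $C_1\cdot C_2=F\cdot C>0$, we conclude that $S$ is algebraic, that is, $\ab(S)=2$. This contradicts the hypothesis $\ab(S)=1$. Therefore every irreducible curve on $S$ is contained in a fiber of $\phi$, and the same then holds for an arbitrary curve, being a union of its irreducible components.

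There is essentially no obstacle here. The only points to check are that a typical fiber $F$ of $\phi$ is an honest irreducible divisor with $F^2=0$, which is immediate from the structure of a fibration over a curve, and that a curve not contained in a fiber of $\phi$ must dominate $B$; the content of the assertion is entirely carried by Lemma~\ref{lemma:two-intersecting-curves}.
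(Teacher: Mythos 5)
Your proof is correct and follows essentially the same route as the paper: both arguments take a typical fiber $F$ of the algebraic reduction, note $F^2=0$ and $F\cdot C>0$ for a putative curve $C$ not contained in a fiber, and invoke Lemma~\ref{lemma:two-intersecting-curves} to contradict $\ab(S)=1$. The only (immaterial) difference is that the paper first replaces $C$ by $C+nF$ with $n\gg 0$ to get a divisor of strictly positive self-intersection before applying the lemma, whereas you apply the lemma directly to the pair $(F,C)$, which the hypothesis $C_1^2\ge 0$ already permits.
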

\begin{proof}
Suppose that there exists an irreducible curve $Z$ on $S$ that is not
contained in a fiber of the algebraic reduction $\theta$ of $S$.
Let $F$ be a typical
fiber of $\theta$. Then $F^2=0$ and $F\cdot Z>0$. Thus, for
$n\gg 0$ one has $(Z+nF)^2>0$ and
$(Z+nF)\cdot F>0$. Now the assertion follows from
Lemma~\ref{lemma:two-intersecting-curves}.
\end{proof}

Recall that a compact complex surface
$S$ is called \emph{minimal} if it does not contain any smooth rational curves with self-intersection~$-1$.
There exists a Kodaira--Enriques classification of minimal compact complex surfaces,
see~\cite[Chapter~VI]{BHPV-2004}.
Recall in particular that a compact complex surface of non-negative Kodaira dimension
is non-ruled (that is, it is not covered by rational curves).
Any non-algebraic K\"ahler compact complex surface has non-negative Kodaira dimension.

\begin{lemma}[{see for instance \cite[Proposition~3.5]{ProkhorovShramov-Surfaces}}]
\label{lemma:Bir-vs-Aut}
Let $S$ be a non-ruled minimal compact complex surface. Then~\mbox{$\Bim(S)=\Aut(S)$}.
\end{lemma}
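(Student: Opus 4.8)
The plan is to argue that every bimeromorphic selfmap of a non-ruled minimal compact complex surface is in fact a morphism, i.e.\ an automorphism, so that the inclusion $\Aut(S)\subset\Bim(S)$ is an equality. First I would take an arbitrary $\varphi\in\Bim(S)$ and resolve its indeterminacy: choose a minimal sequence of blowups $\sigma\colon\tilde S\to S$ such that the composition $\psi=\varphi\circ\sigma\colon\tilde S\to S$ is a morphism. The claim is that $\sigma$ must be an isomorphism, which forces $\varphi=\psi\circ\sigma^{-1}$ to be a morphism.

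The key step is to rule out any blowups in $\sigma$. Suppose $\sigma$ is not an isomorphism. Since $S$ is minimal and non-ruled, it is not rational and not birationally ruled, so the minimal model in its bimeromorphic class is unique up to isomorphism; here that minimal model is $S$ itself (see \cite[Chapter~VI]{BHPV-2004}). Now $\tilde S$ dominates $S$ via both $\sigma$ and $\psi$. Consider the exceptional locus of $\sigma$: it contains a $(-1)$-curve $E$ (the exceptional divisor of the last blowup). Because $S$ has no $(-1)$-curves, the curve $\psi(E)\subset S$ cannot be a point — otherwise $E$ would have to be $\psi$-exceptional too, but then running the same argument on $\psi$ and using uniqueness of the minimal model in the non-ruled case leads to a contradiction, as one can contract the $(-1)$-curves of $\tilde S$ in a unique way to reach $S$ on both sides. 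The cleanest formulation: in the non-ruled case both $\sigma$ and $\psi$ factor through the (unique) minimal model, and by minimality of the chosen resolution $\tilde S$ must already equal $S$, so no blowup occurs.

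Let me restructure this more carefully, since the subtle point is precisely the comparison of the two maps $\tilde S\to S$. I would invoke the fact that for a non-ruled surface the canonical bundle behaves well under bimeromorphic maps: if $\sigma$ involves $k$ blowups then $\KKK_{\tilde S}=\sigma^*\KKK_S + \sum_{i=1}^k E_i$ with the $E_i$ the total transforms of the exceptional curves, all having negative self-intersection contributions, and similarly $\KKK_{\tilde S}=\psi^*\KKK_S+\sum_{j=1}^{\ell}E'_j$ for the resolution data of $\psi^{-1}$. Pushing forward and using that $\KKK_S$ is nef on a minimal non-ruled surface (true when $\varkappa(S)\ge 0$, and the case $\varkappa(S)=-\infty$ non-ruled does not occur for surfaces by classification) gives that the number of blowups on each side must coincide and, intersecting with a $(-1)$-curve on $\tilde S$, one deduces $k=\ell=0$. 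Hence $\sigma$ is an isomorphism and $\varphi$ is an automorphism.

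I expect the main obstacle to be handling the logic of the two-sided resolution cleanly — making rigorous that ``the minimal model is unique in the non-ruled case and both $\sigma,\psi$ realize $\tilde S$ as a blowup of that same $S$'' without circularity. In the write-up I would likely just cite the standard statement from Kodaira--Enriques theory (e.g.\ \cite[Chapter~VI]{BHPV-2004}) that a bimeromorphic map between minimal non-ruled surfaces is an isomorphism, and then note $\Bim(S)=\Bim(S,S)$ restricted to the target also minimal gives $\Bim(S)=\Aut(S)$. Since the lemma is quoted from \cite[Proposition~3.5]{ProkhorovShramov-Surfaces}, a short proof of this form suffices.
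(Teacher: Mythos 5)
The paper does not prove this lemma at all: it is quoted verbatim from \cite[Proposition~3.5]{ProkhorovShramov-Surfaces}, so there is no internal argument to compare against. Your strategy --- resolve the indeterminacy of $\varphi\in\Bim(S)$ by a minimal tower of blowups $\sigma\colon\tilde S\to S$, look at the $(-1)$-curve $E$ of the last blowup, and use $K_{\tilde S}=\psi^*\KKK_S+\sum E_j'$ together with nefness of $\KKK_S$ to force $\sigma$ to be an isomorphism --- is exactly the standard argument behind the cited result, and it is correct whenever $\varkappa(S)\ge 0$ (note that minimality of the resolution is what guarantees $E$ is not $\psi$-exceptional, so that $-1=K_{\tilde S}\cdot E\ge \psi^*\KKK_S\cdot E\ge 0$ gives the contradiction).

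The one genuine flaw is your parenthetical claim that ``the case $\varkappa(S)=-\infty$ non-ruled does not occur for surfaces by classification.'' That implication holds for algebraic (and K\"ahler) surfaces, but the lemma is stated for arbitrary compact complex surfaces, and there the Kodaira--Enriques classification contains a whole class of counterexamples: minimal surfaces of class VII (Hopf, Inoue, Kato surfaces) are non-ruled, have $\varkappa=-\infty$, and $\KKK_S$ is not obviously nef on them. So your proof as written does not cover all surfaces in the scope of the statement. The patch is standard: the last exceptional curve argument actually produces an irreducible \emph{rational} curve $\Gamma=\psi(E)\subset S$ with $\KKK_S\cdot\Gamma<0$; adjunction then forces $\Gamma$ to be a $(-1)$-curve when $\Gamma^2<0$ (contradicting minimality), while $\Gamma^2\ge 0$ is excluded because it would make $S$ ruled in the algebraic case and algebraic in the non-algebraic case (Lemma~\xref{lemma:two-intersecting-curves}), with a short separate discussion of the remaining degenerate possibilities. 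For the purposes of this paper the gap is harmless, since the lemma is only ever applied to surfaces with $\varkappa(S)\ge 0$ or to surfaces containing no curves at all, where the conclusion is immediate.
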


\begin{lemma}[{see \cite[Proposition~1.2]{ProkhorovShramov-SurfacesBFS}, \cite[Lemma~2.4]{ProkhorovShramov-SurfacesBFS}}]
\label{lemma:Kodaira-base-subgroup}
Let $S$ be either a Kodaira surface, or a minimal compact complex surface with~\mbox{$\varkappa(S)=1$}.
In the former case define the elliptic fibration~\mbox{$\phi\colon S\to C$} as the algebraic reduction of~$S$.
In the latter case define~$\phi$ as the pluricanonical fibration. Then the image of the group~\mbox{$\Aut(S)$} in~\mbox{$\Aut(C)$} is finite.
\end{lemma}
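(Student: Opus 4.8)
The plan is to exploit that in both alternatives the fibration $\phi\colon S\to C$ is intrinsically determined by~$S$: every $\gamma\in\Aut(S)$ descends to an automorphism $\rho(\gamma)$ of~$C$ with $\phi\circ\gamma=\rho(\gamma)\circ\phi$, giving a homomorphism $\rho\colon\Aut(S)\to\Aut(C)$, and we must show that $H:=\rho(\Aut(S))$ is finite. (For a Kodaira surface with $\ab(S)=1$ the base of the algebraic reduction is $C\cong\PP^1$, while in the second alternative $\phi$ is the relatively minimal elliptic fibration defined by $|m\KKK_S|$ for $m\gg 0$.) I would use two canonical objects attached to~$\phi$: the finite $H$-invariant subset $\Sigma\subset C$ formed by the images of the singular fibres and of the multiple fibres of~$\phi$, and the holomorphic map $j_\phi\colon C\to\PP^1$ whose value at a general point~$c$ is the $j$-invariant of $\phi^{-1}(c)$ (it extends over~$\Sigma$ since $C$ is smooth). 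As $\gamma$ carries $\phi^{-1}(c)$ isomorphically onto $\phi^{-1}(\rho(\gamma)(c))$, one gets $j_\phi\circ\rho(\gamma)=j_\phi$, so $H\subseteq\{g\in\Aut(C)\mid j_\phi\circ g=j_\phi\}$. If $g(C)\ge2$ then $\Aut(C)$ is already finite; so from now on assume $g(C)\le1$.

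Suppose $g(C)=1$; by the remark above this occurs only for a minimal surface with $\varkappa(S)=1$. Since $\phi$ is relatively minimal, Kodaira's canonical bundle formula gives $\KKK_S\qq\phi^*D$ with $D$ a $\QQ$-divisor on~$C$ of degree $2g(C)-2+\chi(\OOO_S)+\sum_i(1-1/m_i)=\chi(\OOO_S)+\sum_i(1-1/m_i)$, where the $m_i$ are the multiplicities of the multiple fibres of~$\phi$; moreover $\varkappa(S)=1$ is equivalent to $\deg D>0$. Hence either $\chi(\OOO_S)>0$, which forces a singular fibre, or some $m_i\ge2$, which forces a multiple fibre; in either case $\Sigma\ne\emptyset$. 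As $\Aut(C)$ is an extension of a finite group by the translation group of~$C$, and the translations preserving a fixed non-empty finite subset of~$C$ form a finite group, $H$ is finite.

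It remains to treat $g(C)=0$, so $C\cong\PP^1$; this covers all the Kodaira surfaces. If $j_\phi$ is non-constant, it is a morphism $\PP^1\to\PP^1$ of some finite degree~$d$, and $\{g\in\Aut(\PP^1)\mid j_\phi\circ g=j_\phi\}$ has order dividing~$d$, since $j_\phi$ factors through the quotient of~$\PP^1$ by this group; hence $H$ is finite. If $j_\phi$ is constant, then $\phi$ is isotrivial: for a minimal surface with $\varkappa(S)=1$ the orbifold form of the canonical bundle formula yields $-2+\sum_{p\in\Sigma}(1-1/e_p)>0$ with all $e_p\ge2$, forcing $|\Sigma|\ge3$; and for a Kodaira surface the bound $|\Sigma|\ge3$ follows from its structure as a finite quotient of a principal elliptic fibre bundle over a curve, $\Sigma$ being the images of the exceptional orbits. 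Since a subgroup of $\Aut(\PP^1)=\PGL_2(\CC)$ preserving a finite set of at least three points is finite, $H$ is finite, which completes the argument.

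The step I expect to be the main obstacle is the last one: converting the hypothesis on $\varkappa(S)$ (or, for Kodaira surfaces, their structure theory) into the estimate $|\Sigma|\ge3$ for isotrivial fibrations over~$\PP^1$, which requires the precise orbifold canonical bundle formula together with its known relation to $\varkappa(S)$. The non-isotrivial sub-case and the cases $g(C)\ge1$ are comparatively routine once the invariants $\Sigma$ and $j_\phi$ are set up.
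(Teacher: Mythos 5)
The paper gives no proof of this lemma --- it is quoted from \cite[Proposition~1.2, Lemma~2.4]{ProkhorovShramov-SurfacesBFS} --- so your argument can only be judged on its own merits. For minimal surfaces with $\varkappa(S)=1$ and for secondary Kodaira surfaces your strategy (the finite $\Aut(C)$-invariant set $\Sigma$ of images of singular and multiple fibres, plus the $j$-map) is sound and is essentially the standard argument. However, there is a genuine gap: your parenthetical claim that the algebraic reduction of a Kodaira surface has base $C\cong\PP^1$ is false for \emph{primary} Kodaira surfaces. A primary Kodaira surface is a principal elliptic fibre bundle over an elliptic curve $B$, its meromorphic function field contains the elliptic function field $\MMM(B)$ (which cannot embed into $\CC(t)$), and its algebraic reduction is precisely the bundle projection $S\to B$. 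So the case $g(C)=1$ does occur for Kodaira surfaces, contrary to what you assert, and your treatment of $g(C)=1$ relies on $\varkappa(S)=1$ via the canonical bundle formula to produce $\Sigma\neq\emptyset$.

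For a primary Kodaira surface both of your tools are vacuous: the fibration is a smooth fibre bundle, so there are no singular or multiple fibres ($\Sigma=\emptyset$) and $j_\phi$ is constant; hence nothing in your argument prevents the image of $\Aut(S)$ in $\Aut(B)$ from containing the full translation group of $B$. Finiteness in this case is true but requires a different mechanism, namely the non-triviality of the central extension
$1\to\Lambda_F\to\pi_1(S)\to\Lambda_B\to 1$
(equivalently, the non-vanishing twist of the principal bundle): the associated commutator pairing on $\Lambda_B$ and the extension class severely restrict which affine transformations of $B$ lift to $S$, and this is how the cited reference handles primary Kodaira surfaces. Your proof needs this case added; as written it omits it entirely. (A secondary, more minor point: in the isotrivial case over $\PP^1$ with $\varkappa(S)=1$ your orbifold inequality $-2+\sum(1-1/e_p)>0$ should be justified via the base change trivializing the fibration and Riemann--Hurwitz, since the naive canonical bundle formula also contains the term $\chi(\OOO_S)$; the conclusion $|\Sigma|\ge 3$ is nonetheless correct.)
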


\begin{proposition}[{see \cite[Theorem~1.1]{ProkhorovShramov-SurfacesBFS}}]
\label{proposition:BFS-for-surfaces}
Let~$S$ be a compact complex surface of non-zero Kodaira dimension.
Suppose that the group~\mbox{$\Bim(S)$} of bimeromorphic selfmaps of $S$
has unbounded finite subgroups. Then~$S$ is bimeromorphic to a surface of one of the following types:
\begin{itemize}
\item a complex torus;

\item a bielliptic surface;

\item a Kodaira surface;

\item a surface of Kodaira dimension $1$.
\end{itemize}
Moreover, in the first three cases the group $\Bim(S)$ always has
unbounded finite subgroups.
\end{proposition}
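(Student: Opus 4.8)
The plan is to reduce to the case of a minimal surface and then go through the Kodaira--Enriques classification. Since $\varkappa(S)\ge 0$, the surface $S$ is non-ruled, and non-ruledness is a bimeromorphic invariant; hence the unique minimal model $S_0$ of $S$ is non-ruled, and Lemma~\ref{lemma:Bir-vs-Aut} gives $\Bim(S)\cong\Bim(S_0)=\Aut(S_0)$. So I may assume $S=S_0$ is minimal and work with $\Aut(S)$ in place of $\Bim(S)$. By the Kodaira--Enriques classification, $S$ is then a complex torus, a K3 surface, an Enriques surface, a bielliptic surface, a primary or secondary Kodaira surface, a minimal surface with $\varkappa(S)=1$, or a surface of general type.

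For the ``only if'' part I would show that in each of the four cases not appearing in the conclusion the group $\Aut(S)$ has bounded finite subgroups, so that the hypothesis cannot hold. If $S$ is of general type, then $\Aut(S)$ is finite. If $S$ is a K3 or an Enriques surface, then $\Aut(S)$ acts on $H^2(S,\ZZ)$ modulo torsion --- a free abelian group of finite rank $N$, with automorphism group $\GL_N(\ZZ)$ --- and the kernel of this action is finite (trivial for K3 surfaces, by the Torelli theorem); since $\GL_N(\ZZ)$ has bounded finite subgroups by Minkowski's theorem, so does $\Aut(S)$ by Lemma~\ref{lemma:group-theory}. If $S$ is a secondary Kodaira surface, its algebraic reduction is an elliptic fibration $\phi\colon S\to\PP^1$; by Lemma~\ref{lemma:Kodaira-base-subgroup} the image of $\Aut(S)$ in $\Aut(\PP^1)$ is finite, while the kernel of $\Aut(S)\to\Aut(\PP^1)$ acts fiberwise and is therefore an extension of a finite group by the Mordell--Weil group of the Jacobian fibration of $\phi$, which is finitely generated by the Lang--N\'eron theorem; hence the kernel, and so $\Aut(S)$, has bounded finite subgroups by Lemma~\ref{lemma:group-theory}. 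Consequently, if $\Bim(S)$ has unbounded finite subgroups then $S$ is bimeromorphic to a complex torus, a bielliptic surface, a (primary) Kodaira surface, or a surface of Kodaira dimension~$1$, which is the asserted conclusion.

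For the final (``moreover'') part, let $S$ be a minimal complex torus, bielliptic surface, or primary Kodaira surface; I would exhibit a positive-dimensional complex subtorus inside $\Aut(S)=\Bim(S)$. When $S$ is a torus this is $S$ itself, acting by translations. Writing a bielliptic surface as $(E_1\times E_2)/G$ with $G$ finite acting on $E_1$ by translations, the $E_1$-translations commute with $G$ and descend to a one-dimensional subtorus of $\Aut(S)$. When $S$ is a principal elliptic bundle over an elliptic curve with fiber an elliptic curve $F$ --- the case of a primary Kodaira surface --- the group $F$ acts on $S$ by fiberwise translations. Since a complex torus of positive dimension contains a subgroup isomorphic to $(\ZZ/n)^2$ for every $n\ge 1$, the group $\Bim(S)$ has unbounded finite subgroups in all three cases.

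The step I expect to be the main obstacle is the analysis of Kodaira surfaces: one must distinguish the primary Kodaira surfaces (which occur in the conclusion, with $\Bim$ having unbounded finite subgroups) from the secondary ones (for which $\Aut(S)$ is in fact bounded, via the algebraic reduction, Lemma~\ref{lemma:Kodaira-base-subgroup}, and finiteness of Mordell--Weil), and one must make sure the Kodaira--Enriques list is exhausted. The remaining ingredients --- finiteness of the automorphism group of a surface of general type, injectivity of the action on $H^2$ for K3 surfaces, the explicit structure of bielliptic and primary Kodaira surfaces, and finiteness of the Mordell--Weil group of an elliptic surface over a curve --- are classical.
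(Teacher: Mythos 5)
The paper does not actually prove this proposition: it is imported verbatim from \cite[Theorem~1.1]{ProkhorovShramov-SurfacesBFS}, so there is no in-paper argument to compare yours with. Judged on its own, your reduction to the unique minimal model via Lemma~\xref{lemma:Bir-vs-Aut}, the run through the Kodaira--Enriques list, the boundedness arguments for general type, K3 and Enriques surfaces, and the three constructions in the ``moreover'' part (translations on a torus, descended $E_1$-translations on a bielliptic surface, fiberwise translations on a primary Kodaira surface) are all correct.

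The genuine error is your treatment of secondary Kodaira surfaces, and it is not a harmless one: your conclusion there contradicts the ``moreover'' clause of the statement itself, which asserts unbounded finite subgroups for \emph{all} Kodaira surfaces, secondary ones included. The Lang--N\'eron theorem gives finite generation of the Mordell--Weil group only when the trace of the generic fiber is trivial; the elliptic fibration $\phi\colon S\to\PP^1$ of a secondary Kodaira surface is isotrivial (locally trivial away from the multiple fibers), so the group of fiberwise bimeromorphic selfmaps contains a positive-dimensional complex torus and is nothing like an extension of a finite group by a finitely generated one. Concretely, write $S=S'/\langle g\rangle$ with $S'$ a primary Kodaira surface, an elliptic fiber bundle with fiber $F$ over an elliptic curve $B$. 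The induced automorphism $\bar g$ of $B$ acts on $H^1(B,\QQ)$ without nonzero fixed vectors, hence is a translation composed with a nontrivial finite-order automorphism and therefore has a fixed point $b_0\in B$; freeness of the $g$-action forces $g|_{F_{b_0}}$ to be a translation, and conjugating the fiberwise $F$-translation action by $g$ and restricting to $F_{b_0}$ shows that $g$ commutes with every translation $t_a$, $a\in F$. Hence $F$ descends to $S$ with finite kernel, and $\Aut(S)$ contains $(\ZZ/n\ZZ)^2$ for every $n$. So secondary Kodaira surfaces have \emph{unbounded} finite subgroups; your strengthened ``only if'' conclusion (primary Kodaira surfaces only) is false, and the ``moreover'' clause is left unproved for the secondary case. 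The repair is straightforward: secondary Kodaira surfaces are already admitted by the proposition's list, so nothing needs to be excluded in the first direction, but the descent argument above must replace your Mordell--Weil argument in the second.
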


The following assertion is well-known, but we provide its proof for
the reader's convenience.

\begin{lemma}\label{lemma:pa-a-0}
Let $S$ be a non-algebraic compact complex surface, and let $D$ be a non-zero effective divisor on $S$. Then
$\pa(D)\le 1$.
\end{lemma}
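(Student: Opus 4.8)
The plan is to reduce to the case when $D$ is a curve (i.e.\ has no multiple components) and then exploit the fact that a non-algebraic surface carries very few curves with non-negative self-intersection. First I would recall the adjunction-type formula for the arithmetic genus of an effective divisor on a smooth surface: if $D$ is a non-zero effective divisor on $S$, then
$$
\pa(D)=1+\tfrac12\bigl(D^2+D\cdot\KKK_S\bigr).
$$
So the statement $\pa(D)\le 1$ is equivalent to $D^2+D\cdot\KKK_S\le 0$. The key structural input is Lemma~\ref{lemma:two-intersecting-curves}: on a non-algebraic surface, whenever an irreducible curve $C$ satisfies $C^2\ge 0$, it cannot meet any other effective divisor positively, and in fact any two distinct irreducible curves on $S$ have intersection number $\le 0$ unless $S$ is algebraic. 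This severely constrains the intersection pattern of the components of $D$.

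The main step is to analyze $D=\sum_i a_i C_i$ with $a_i>0$ and $C_i$ distinct irreducible curves. For distinct $i\ne j$ we have $C_i\cdot C_j\ge 0$, but Lemma~\ref{lemma:two-intersecting-curves} together with non-algebraicity of $S$ forces, roughly, that at most one component can have non-negative self-intersection, and if some $C_i$ has $C_i^2\ge 0$ then $C_i$ must be disjoint from all other components (else $C_i^2\ge 0$ and $C_i\cdot C_j>0$ would make $S$ algebraic); moreover distinct components cannot meet either (two curves $C_i,C_j$ with $C_i\cdot C_j>0$: replacing if necessary, $(C_i+nC_j)^2>0$ for $n\gg0$ and it meets $C_j$ positively, contradiction as in Corollary~\ref{corollary:two-intersecting-curves}). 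Hence $D$ is a disjoint union of its components, so $\pa(D)=\sum_i\pa(a_iC_i)-(\#\text{components}-1)$, and it suffices to bound $\pa(a C_i)$ for a single irreducible curve $C=C_i$ with multiplicity $a\ge1$. For such a curve $C^2\le 0$ on a non-algebraic surface unless $C$ is the unique non-negative component, in which case $C^2=0$ and $C\cdot\KKK_S\le 0$ by non-algebraicity (a curve with $C^2=C\cdot\KKK_S\ge0$ combines into something algebraic), so $\pa(C)\le1$; and for $a\ge 2$ one computes
$$
\pa(aC)=1+\tfrac12\bigl(a^2C^2+a\,C\cdot\KKK_S\bigr)=1+a\bigl(\pa(C)-1\bigr)+\tfrac{a(a-1)}{2}C^2,
$$
which is $\le \pa(C)\le 1$ since $C^2\le 0$ and $\pa(C)\le1$.

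The step I expect to be the real obstacle is the case distinction for a single irreducible curve $C$ — specifically ruling out $C^2>0$ and, when $C^2=0$, controlling the sign of $C\cdot\KKK_S$. The clean way is: if $C^2>0$ then $S$ is algebraic by Lemma~\ref{lemma:two-intersecting-curves} (taking $C_1=C_2=C$), contradiction; and if $C^2=0$, then for the canonical bundle one notes that on a non-algebraic surface $\pa(C)\le 1$ fails only if $C\cdot\KKK_S\ge1$, but then $C$ and $\KKK_S$ (or rather a sufficiently positive effective multiple, using that $\varkappa(S)\ge0$ for non-algebraic K\"ahler surfaces, or handling the general non-algebraic case directly via the fact that $\pa(C)\ge0$ for an irreducible curve is automatic and $\pa(C)\le1$ for irreducible curves on non-algebraic surfaces is classical) gives the bound. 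I would organize the write-up so that the irreducible case is settled first by a direct appeal to Lemma~\ref{lemma:two-intersecting-curves} plus $\pa(C)\ge0$, and then bootstrap to arbitrary effective $D$ via the disjointness argument above.
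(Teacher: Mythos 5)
Your argument has two genuine gaps, and the second one sits exactly where the real content of the lemma is. First, the disjointness claim for the components of $D$ is false. Lemma~\ref{lemma:two-intersecting-curves} requires one of the two divisors to have \emph{non-negative} self-intersection; it says nothing about two curves $C_i$, $C_j$ with $C_i^2<0$ and $C_j^2<0$ meeting each other, and your fallback ``$(C_i+nC_j)^2>0$ for $n\gg 0$'' fails precisely there, since the leading term $n^2C_j^2$ is negative. Such configurations do occur on non-algebraic surfaces: ADE configurations of $(-2)$-curves on non-algebraic $K3$ surfaces, or cycles of rational curves inside fibers of the algebraic reduction when $\ab(S)=1$. (A cycle of rational curves has $\pa=1$, so the lemma survives, but your structural reduction to pairwise disjoint components does not.)

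Second, and more seriously, the case of a single irreducible curve $C$ is never actually proved. By adjunction, $\pa(C)\le 1$ is equivalent to $C\cdot\KKK_S\le -C^2$, so bounding $C\cdot\KKK_S$ from above \emph{is} the statement to be proved; it cannot be quoted as ``classical'' nor extracted from Lemma~\ref{lemma:two-intersecting-curves}, which applies to effective divisors, whereas $\KKK_S$ need not be effective (nor have an effective multiple) on a non-algebraic surface --- your phrase ``combines into something algebraic'' is circular at this point. The paper attacks exactly this issue by a different route: for $\ab(S)=1$ it uses Corollary~\ref{corollary:two-intersecting-curves} to place $D$ inside the fibers of the elliptic algebraic reduction; for $\ab(S)=0$ it bounds $\h^1(D,\OOO_D)$ via the cohomology sequence of $0\to\OOO_S(-D)\to\OOO_S\to\OOO_D\to 0$, Serre duality, the fact that every line bundle on a surface of algebraic dimension $0$ has at most a one-dimensional space of sections, and Hodge-number estimates, obtaining first $\pa(D)\le 4$ and then improving this to $\pa(D)\le 1$ by pulling $D$ back under unramified $n$-fold covers (which exist because $\h^{0,1}(S)>0$ whenever $\pa(D)>1$) and comparing degrees of $\KKK_S\otimes\OOO_S(D)$ on $D$ and on its preimage. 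None of these ingredients is present in your proposal, and the intersection-theoretic input you do use cannot replace them.
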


\begin{proof}
If $\ab(S)=1$, then by
Corollary~\ref{corollary:two-intersecting-curves}
every effective divisor~$D$ on~$S$ is contained in the fibers of
the algebraic reduction of~$S$. The latter is an elliptic fibration,
and thus
in this case it is obvious that~\mbox{$\pa(D)\le 1$}. Therefore, we will assume that~\mbox{$\ab(S)=0$}.

Consider the exact sequence of sheaves
$$
0\to \OOO_S(-D)\to \OOO_S\to\OOO_D\to 0.
$$
It gives the exact sequence of cohomology groups
$$
\ldots\to H^1(S, \OOO_S)\to H^1(D, \OOO_D)\to H^2(S, \OOO_S(-D))\to\ldots
$$
Thus we obtain
$$
\h^1(D, \OOO_D)\le \h^1(S,\OOO_S)+\h^2(S,\OOO_S(-D)).
$$
Serre duality implies
$$
\h^1(D, \OOO_D)\le \h^1(S,\OOO_S)+\h^0(S,\KKK_S\otimes\OOO_S(D)).
$$
Since $\ab(S)=0$, the space of global sections of any line bundle on $S$ has dimension at most $1$. Hence
$$
\h^1(D, \OOO_D)\le \h^1(S,\OOO_S)+1.
$$
Recall that
$$
\h^1(S,\OOO_S)=\h^{0,1}(S)\le \h^{1,0}(S)+1,
$$
see for instance \cite[Theorem~IV.2.7]{BHPV-2004}.
Furthermore, since $\ab(S)=0$, one has $\h^{1,0}(S)\le 2$, see
\cite[Proposition~IV.8.1(ii)]{BHPV-2004}.
Therefore, we obtain
\begin{equation}\label{eq:h01}
\h^1(D, \OOO_D)\le \h^{0,1}(S)+1\le \h^{1,0}(S)+2\le 4.
\end{equation}
This gives
\begin{equation}\label{eq:pa}
\pa(D)=\h^1(D,\OOO_D)-\h^0(D,\OOO_D)+1\le \h^1(D,\OOO_D)\le 4.
\end{equation}

Suppose that $\pa(D)>1$. Then~\eqref{eq:pa} implies
$$
\h^1(D, \OOO_D)>1,
$$
and thus $\h^{0,1}(S)>0$ by~\eqref{eq:h01}. In particular, we have~\mbox{$\rk H^1(S,\ZZ)> 0$}. Hence for any positive integer $n$ there exists an unramified
$n$-fold covering
$\pi\colon S'\to S$ (see for instance \cite[Proposition~I.18.1(i)]{BHPV-2004}).
Set~\mbox{$D'=\pi^*(D)$}. Then $D'$ is a non-zero effective divisor on a compact complex surface~$S'$.
Since $S'$ has zero algebraic dimension, the above arguments show that
$\pa(D')\le 4$.
On the other hand, we have
\begin{multline*}
6\ge
2\pa(D')-2=\deg \left(\KKK_{S'}\otimes\OOO_{S'}(D')\right)\vert_{D'}=\\
=\deg \left(\pi^*\KKK_{S}\otimes\pi^*\OOO_{S}(D)\right)\vert_{D'}=\\
=n\cdot\deg \left(\KKK_{S}\otimes\OOO_{S}(D)\right)\vert_{D}=n\cdot(2\pa(D)-2)\ge 2n.
\end{multline*}
The obtained contradiction shows that the inequality $\pa(D)>1$ is impossible.
\end{proof}

\begin{lemma}\label{lemma:pa-a-0-Kahler}
Let $S$ be a compact K\"ahler surface of algebraic dimension~$0$, and let $D$ be a connected
reduced effective divisor on~$S$. Then~\mbox{$\pa(D)=0$}.
\end{lemma}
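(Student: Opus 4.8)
The plan is to leverage Lemma~\ref{lemma:pa-a-0}, which gives $\pa(D)\le 1$ for any non-zero effective divisor $D$ on a non-algebraic surface, and then to rule out the case $\pa(D)=1$ using the K\"ahler hypothesis together with connectedness and reducedness of $D$. First I would recall that since $D$ is connected and reduced, $\h^0(D,\OOO_D)=1$, so $\pa(D)=\h^1(D,\OOO_D)$; thus it suffices to show $\h^1(D,\OOO_D)=0$, i.e.\ to exclude $\h^1(D,\OOO_D)=1$.

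Suppose $\pa(D)=1$. The key point is that a connected reduced effective divisor of arithmetic genus $1$ on a smooth surface is what is classically called a curve of \emph{canonical type} (in Kodaira's terminology): one can show $\KKK_S\otimes\OOO_S(D)$ restricts to $\OOO_D$ on $D$, so $\deg(\KKK_S+D)|_{D_i}=0$ for each irreducible component $D_i$; equivalently $D$ (or an appropriate positive multiple, if $D$ is not primitive, but since $D$ is reduced this is already the situation of a cycle of rational curves or a smooth elliptic curve or a rational curve with one node/cusp) supports a fiber-like configuration. In all of these cases $D^2=0$ and $\OOO_D(D)$ is a torsion or trivial line bundle on $D$. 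The idea is that such a $D$ behaves like a (multiple of a) fiber of an elliptic fibration: by the theory of curves of canonical type on compact complex surfaces (see Kodaira, or \cite[Chapter~III]{BHPV-2004}), the existence of such a configuration with $D^2=0$ already forces $S$ to have an elliptic fibration, or at least forces $\ab(S)\ge 1$, contradicting $\ab(S)=0$.

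More concretely, I would argue as follows. Since $\pa(D)=1$ and $D$ is reduced and connected, $\KKK_S|_D\otimes\OOO_D(D)\cong\OOO_D$, and in particular $D^2 = \deg \OOO_D(D)=-\deg(\KKK_S|_D)=2\pa(D)-2-D^2$... more cleanly, $(\KKK_S+D)\cdot D=2\pa(D)-2=0$, and on a non-algebraic surface an irreducible curve has non-positive self-intersection, from which one deduces $D_i^2\le 0$ for every component and hence (since the intersection form is negative semidefinite on components of $D$ and $(\KKK_S+D)\cdot D=0$) one gets $D^2=0$. Now a non-zero effective divisor $D$ with $D^2=0$ on a K\"ahler surface: using the K\"ahler class $\omega$, one has $D\cdot\omega>0$, so the class $[D]\in H^2(S,\RR)$ is non-trivial and isotropic. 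On a K\"ahler surface the existence of a non-zero effective divisor with $D^2=0$ is well known to imply $\ab(S)\ge 1$ (this is essentially the statement that $|nD|$ for suitable $n$ gives a pencil, i.e.\ an elliptic or quasi-elliptic fibration, via Riemann--Roch: $\h^0(S,nD)$ grows because $\chi(\OOO_S(nD))=\chi(\OOO_S)$ and $\h^2(S,nD)=\h^0(\KKK_S-nD)=0$ for $n\gg0$ since $(\KKK_S-nD)\cdot\omega<0$ eventually... one needs $\KKK_S\cdot\omega$ bounded and $D\cdot\omega>0$). This contradicts $\ab(S)=0$.

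The main obstacle I anticipate is making the last step fully rigorous without circularity: one must pass from ``$D$ is a connected reduced effective divisor of arithmetic genus $1$ with $D^2=0$'' to ``$\ab(S)\ge 1$'' cleanly. The honest route is to invoke the structure of curves of canonical type together with Riemann--Roch on the surface to produce a genuine pencil (hence a non-constant meromorphic function), using $\chi(\OOO_S(nD))=\chi(\OOO_S)+\tfrac12(nD)\cdot(nD-\KKK_S)=\chi(\OOO_S)-\tfrac n2 \KKK_S\cdot D$, and $\KKK_S\cdot D = (\KKK_S+D)\cdot D - D^2 = 0$, so $\chi(\OOO_S(nD))=\chi(\OOO_S)$ is constant; combined with the vanishing of $\h^2(S,\OOO_S(nD))=\h^0(S,\KKK_S-nD)$ (which holds for $n\gg 0$ because $(\KKK_S-nD)\cdot\omega\to-\infty$), this forces $\h^0(S,\OOO_S(nD))\to\infty$, producing the required pencil and hence $\ab(S)\ge1$, the desired contradiction. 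Alternatively, if the paper prefers, one can cite the relevant statement from \cite[Chapter~III and Chapter~IV]{BHPV-2004} directly.
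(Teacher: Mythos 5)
Your overall strategy (reduce to excluding $\pa(D)=1$, then derive a contradiction with $\ab(S)=0$) is reasonable, but the pivotal step is a non sequitur. From $(\KKK_S+D)\cdot D=2\pa(D)-2=0$ you only get $\KKK_S\cdot D=-D^2$; knowing that each component has non-positive self-intersection (or even that the intersection form on the components is negative definite) does \emph{not} force $D^2=0$, because on a non-minimal surface $\KKK_S$ has positive exceptional part and $\KKK_S\cdot D$ can be strictly positive. At the level of intersection numbers nothing is contradictory: two smooth rational curves of self-intersection $-3$ meeting transversally in two points form a connected reduced divisor with $\pa(D)=1$, $D^2=-2$, and negative definite intersection matrix. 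So your argument never actually excludes $\pa(D)=1$ on $S$ itself. What is missing is the global classification input that the paper uses: since $S$ is K\"ahler with $\ab(S)=0$, its minimal model $S_{\min}$ is a complex torus or a $K3$ surface, hence $\KKK_{S_{\min}}$ is trivial; there the identity $(\KKK+D)\cdot D=0$ really does give $D^2=0$, which contradicts the negative definiteness of the (finitely many) curves on a surface of algebraic dimension~$0$ (\cite[Theorem~IV.8.2]{BHPV-2004}). One concludes that every connected curve configuration on $S_{\min}$ is a tree of smooth rational curves, and this property is inherited by the blowup $S$.

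A secondary problem is your last step: even granting $D^2=\KKK_S\cdot D=0$, Riemann--Roch gives $\h^0(S,\OOO_S(nD))\ge\chi(\OOO_S)$ for $n\gg0$, which produces a pencil (hence $\ab(S)\ge 1$) only when $\chi(\OOO_S)\ge 2$, i.e.\ in the $K3$ case; when the minimal model is a torus one has $\chi(\OOO_S)=0$ and the count yields nothing. (In that case one must instead use that a torus of algebraic dimension $0$ contains no curves at all, so $D$ is supported on the exceptional locus of $S\to S_{\min}$, which is a disjoint union of trees of smooth rational curves.) So both halves of your argument ultimately need the torus/$K3$ dichotomy that you never invoke; once it is invoked, the paper's shorter combinatorial argument becomes available and the Riemann--Roch machinery is unnecessary.
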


\begin{proof}
In this case the minimal model $S_{\min}$ of the surface~$S$ is either a complex torus or a $K3$ surface.
If $\pa(D)>0$, then $D$ varies in a positive-dimensional algebraic family. On the other hand, since~\mbox{$\ab(S_{\min})=0$},
the surface $S_{\min}$ contains at most
a finite number of curves, see~\mbox{\cite[Theorem~IV.8.2]{BHPV-2004}}. Hence the support of any connected divisor on~$S_{\min}$
is a tree of smooth rational curves. Therefore, the same holds
for the surface~$S$.
\end{proof}

\begin{remark}\label{remark:not-much-interesting}
Let $S$ be a non-algebraic compact complex surface.
Then $S$ contains at most a finite number of rational curves.
Indeed, if $\ab(S)=0$, then $S$ contains a finite number of
curves at all by~\mbox{\cite[Theorem~IV.8.2]{BHPV-2004}}.
If $\ab(S)=1$, then it follows from
Corollary~\ref{corollary:two-intersecting-curves}
that all the curves on $S$ are contained in the fibers of
its algebraic reduction, and it remains to notice that a typical
fiber of the latter is an elliptic curve.
A similar argument shows that a non-algebraic compact complex surface
contains at most a finite number of singular curves.
\end{remark}

\section{Uniruled manifolds}
\label{section:uniruled}

In this section we recall the definitions and the main properties of uniruled and rationally connected manifolds as well as main properties of rationally connected fibrations.

A compact complex manifold is said to be \emph{uniruled} if it can be covered by rational curves. More precisely, a compact complex manifold~$X$ is  uniruled if
there exist compact complex manifolds $\mathcal{U}$ and $\mathcal{Z}$ and
morphisms
\[
\xymatrix@R=7pt{
&\mathcal{U}\ar[dl]_{\mathrm{\pi}}\ar[dr]^{\mathrm{\varphi}}&
\\
\mathcal{Z}&& X
}
\]
such that a typical fiber of $\pi$ is a smooth rational curve, and the morphism $\varphi$
is surjective and does not contract a typical fiber of the morphism $\pi$.
This is equivalent to the existence of
a compact complex manifold $Y$, a holomorphic rank $2$ vector bundle $\EEE$ on $Y$, and a dominant meromorphic map
$$
f\colon \PP_Y(\EEE) \dashrightarrow X,
$$
which does not factor through the projection
$$
\PP_Y(\EEE)\to Y,
$$
see~\mbox{\cite[Lemma~2.2]{Fujiki1981}}.
It is clear that uniruledness is a bimeromorphic invariant.

\begin{proposition}[{\cite[Remark,~p.~691]{Fujiki1981}, \cite[Corollary~IV.1.11]{Kollar-1996-RC}}]
Let~$X$ be a uniruled compact complex manifold.
Then $H^0(X, (\KKK_X)^{\otimes p})=0$ for all $p>0$, i.\,e.
$\varkappa(X)=-\infty$.
\end{proposition}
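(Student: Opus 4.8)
The plan is to reduce the vanishing of pluricanonical sections to the corresponding statement for the total space of a $\PP^1$-bundle and then to exploit the rational curve in the fibres. Since uniruledness and the vanishing of pluricanonical sections are both bimeromorphic invariants --- for the latter, $H^0(X,(\KKK_X)^{\otimes p})$ is a bimeromorphic invariant of compact complex manifolds --- it suffices, by the characterization recalled just before the statement, to treat the case $X=\PP_Y(\EEE)$ for a compact complex manifold $Y$ and a rank $2$ holomorphic vector bundle $\EEE$ on $Y$. Actually the map $f\colon\PP_Y(\EEE)\dashrightarrow X$ only goes one way, so I would argue as follows: a dominant meromorphic map does not increase pluricanonical sections, i.e.\ a nonzero section of $(\KKK_X)^{\otimes p}$ pulls back to a nonzero section of $(\KKK_{\PP_Y(\EEE)})^{\otimes p}$ over the locus where $f$ is defined, which extends across the indeterminacy locus by Hartogs. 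Hence it is enough to show $H^0(\PP_Y(\EEE),(\KKK_{\PP_Y(\EEE)})^{\otimes p})=0$ for all $p>0$.

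Let $\pi\colon P=\PP_Y(\EEE)\to Y$ be the projection, with relative dimension $1$. The relative canonical bundle $\KKK_{P/Y}$ restricts to $\KKK_{\PP^1}=\OOO(-2)$ on each fibre, so there is the relative Euler-type identity $\KKK_P\cong\pi^*(\KKK_Y\otimes\det\EEE)\otimes\OOO_P(-2)$, where $\OOO_P(1)$ is the relative hyperplane bundle; only the qualitative shape $\KKK_P\cong\pi^*\LLL\otimes\OOO_P(-2)$ for some line bundle $\LLL$ on $Y$ matters here. Then $(\KKK_P)^{\otimes p}\cong\pi^*\LLL^{\otimes p}\otimes\OOO_P(-2p)$. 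Pushing forward to $Y$ and using the projection formula together with the fact that $\pi_*\OOO_P(-2p)=0$ for $p>0$ (because $H^0(\PP^1,\OOO(-2p))=0$ and this holds on every fibre), we get $\pi_*\big((\KKK_P)^{\otimes p}\big)=\LLL^{\otimes p}\otimes\pi_*\OOO_P(-2p)=0$. Since $\pi$ is proper with connected fibres, $H^0(P,(\KKK_P)^{\otimes p})=H^0(Y,\pi_*(\KKK_P)^{\otimes p})=0$, as desired. This also gives $\varkappa(X)=\varkappa(P)=-\infty$.

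The one point requiring a little care --- and the step I would flag as the main technical obstacle --- is the behaviour under the meromorphic map $f$: one must be sure that a dominant meromorphic map $f\colon P\dashrightarrow X$ of compact complex manifolds yields an injection $H^0(X,(\KKK_X)^{\otimes p})\hookrightarrow H^0(P,(\KKK_P)^{\otimes p})$. For this one resolves the indeterminacy: choose a modification $\mu\colon\widetilde P\to P$ so that $\tilde f=f\circ\mu\colon\widetilde P\to X$ is a morphism; since $\tilde f$ is dominant between manifolds of the same dimension (if $\dim X<\dim P$ one first notes that $f$ cannot be dominant onto a lower-dimensional manifold and the statement is about $X$ itself being uniruled of dimension $\dim X$), the differential $d\tilde f$ is generically an isomorphism, so $\tilde f^*\KKK_X\to\KKK_{\widetilde P}$ is a nonzero sheaf map, whence $\tilde f^*$ is injective on pluricanonical sections; and $H^0(\widetilde P,(\KKK_{\widetilde P})^{\otimes p})=H^0(P,(\KKK_P)^{\otimes p})$ by bimeromorphic invariance. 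Combining with the vanishing on $P$ established above, we conclude $H^0(X,(\KKK_X)^{\otimes p})=0$ for all $p>0$, i.e.\ $\varkappa(X)=-\infty$.
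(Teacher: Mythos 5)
The paper does not actually prove this proposition --- it is quoted from Fujiki and from Koll\'ar with no internal argument --- so there is nothing in the text to compare your proof against; it has to stand on its own. In substance your argument is the standard one (and essentially the one behind Fujiki's remark): reduce to a $\PP^1$-bundle $P=\PP_Y(\EEE)$ dominating $X$, note that $\KKK_P\cong\pi^*\LLL\otimes\OOO_P(-2)$ forces $\pi_*\bigl(\KKK_P^{\otimes p}\bigr)=0$ and hence $H^0(P,\KKK_P^{\otimes p})=0$, and pull back pluricanonical sections injectively along a dominant map after resolving indeterminacy. Those steps are all correct in the analytic category (Grauert coherence and the projection formula apply, and plurigenera are bimeromorphic invariants of compact complex manifolds).

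The one point that is genuinely wrong as written is your treatment of the case $\dim X<\dim P$. The parenthetical claim that ``$f$ cannot be dominant onto a lower-dimensional manifold'' is false: dominance only means dense image, and the projection $\PP_Y(\EEE)\to Y$ itself is a dominant map onto a lower-dimensional manifold. Your injectivity step really does need $\dim P=\dim X$: if $\dim P>\dim X$, the pullback of a section of $(\Omega_X^{\dim X})^{\otimes p}$ lives in $(\Omega_P^{\dim X})^{\otimes p}$, not in $(\KKK_P)^{\otimes p}$, so the vanishing you established on $P$ says nothing about it. The gap is closed not by your parenthetical but by the precise form of the cited characterization: in Fujiki's Lemma~2.2 one may take $\dim Y=\dim X-1$, so that $f$ is a generically finite dominant meromorphic map between manifolds of equal dimension and $\tilde f^*\KKK_X\hookrightarrow\KKK_{\widetilde P}$ makes sense. (The paper's own restatement of that lemma omits the dimension condition, which is probably what tripped you up.) With that correction your proof is complete. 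For contrast, Koll\'ar's Corollary~IV.1.11 takes a different route --- restricting a pluricanonical section to free rational curves, on which $\KKK_X$ has negative degree --- which avoids the $\PP^1$-bundle model but requires deformation theory of rational curves instead.
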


\begin{theorem}[\cite{Demailly-Peternell:2003}, \cite{Hoering-Peternell-2016}]
Let $X$ be a compact K\"ahler manifold of dimension at most $3$. Then $X$ is uniruled if and
only if $\varkappa(X)=-\infty$.
\end{theorem}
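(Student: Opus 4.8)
The plan is to establish the two implications separately. The implication ``$X$ uniruled $\Rightarrow\varkappa(X)=-\infty$'' is exactly the Proposition preceding the statement: a uniruled compact complex manifold has $H^0(X,(\KKK_X)^{\otimes p})=0$ for all $p>0$, so $\varkappa(X)=-\infty$. It remains to prove that, for $\dim X\le 3$, the condition $\varkappa(X)=-\infty$ forces $X$ to be uniruled; I would do this by ascending induction on $\dim X$.

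In dimension $1$ there is nothing to do: a smooth compact curve with $\varkappa=-\infty$ is $\PP^1$, which is a rational curve. In dimension $2$ one passes to a minimal model and invokes the Kodaira--Enriques classification (recalled in Section~\ref{section:prelim}): a minimal compact complex surface with $\varkappa=-\infty$ is rational, ruled over a curve, or of class~$\mathrm{VII}$; the last case has odd first Betti number, hence is non-K\"ahler and is excluded. In the remaining cases the surface is covered by rational curves, and since uniruledness is a bimeromorphic invariant, so is $X$.

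The substantial case is $\dim X=3$, and this is where the cited work of Demailly--Peternell and H\"oring--Peternell is needed. I would argue in two steps. \emph{Step 1}: show that $\varkappa(X)=-\infty$ implies that $\KKK_X$ is not pseudo-effective; indeed, if $\KKK_X$ were pseudo-effective then the abundance theorem for K\"ahler threefolds would give $\varkappa(X)\ge 0$, a contradiction. \emph{Step 2}: run the K\"ahler minimal model program on $X$. Since $\KKK_X$ is not pseudo-effective, the program cannot terminate with a minimal model; by the cone and contraction theorems for K\"ahler threefolds, together with the existence and termination of flips in this category, it terminates with a Mori fibre space $X'\to Y$, where $X'$ is bimeromorphic to $X$, one has $\dim Y<3$, and $-\KKK_{X'}$ is relatively ample over $Y$. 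A general fibre of this fibration is then a Fano variety (possibly singular, with at worst terminal singularities) of dimension $3-\dim Y\in\{1,2,3\}$, namely a copy of $\PP^1$, a del Pezzo surface, or a Fano threefold; in all cases it is rationally connected, hence uniruled. Since a general point of $X'$ lies on a rational curve contained in such a fibre, $X'$ is uniruled, and, uniruledness being a bimeromorphic invariant, so is $X$.

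The main obstacle is precisely the input to Steps~1 and~2: the whole apparatus of the minimal model program in the non-algebraic K\"ahler category in dimension three --- the cone theorem, the existence of divisorial and flipping contractions, the existence and termination of flips, and the abundance (and non-vanishing) theorems --- for which the usual Mori-theoretic tools are unavailable and must be replaced by transcendental methods (closed positive currents, Demailly regularization, positivity of direct images, and so on). This is exactly what is developed in \cite{Demailly-Peternell:2003} and \cite{Hoering-Peternell-2016} together with the papers on which they build, so here the statement is invoked rather than reproved.
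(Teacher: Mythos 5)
The paper does not prove this statement at all: it is imported wholesale from the cited works of Demailly--Peternell and H\"oring--Peternell, so there is no internal proof to compare yours against, and your write-up is in the same position --- the easy implication and the cases $\dim X\le 2$ are handled correctly (the preceding Proposition for ``uniruled $\Rightarrow\varkappa=-\infty$'', and the Enriques--Kodaira classification with class $\mathrm{VII}$ excluded by K\"ahlerness for surfaces), while the threefold case is ultimately a pointer to the same references. As a reading guide your sketch is broadly accurate, but two points deserve care. First, in Step 1 the inference ``$\KKK_X$ pseudo-effective $\Rightarrow\varkappa(X)\ge 0$'' is not the abundance theorem alone: abundance applies to a minimal model, so one needs existence of minimal models for K\"ahler threefolds with $\KKK_X$ pseudo-effective (H\"oring--Peternell) \emph{and then} abundance/non-vanishing (Campana--H\"oring--Peternell). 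Second, and more seriously, Step 2 as written risks circularity: in the K\"ahler category the cone and contraction theorems needed to run the MMP when $\KKK_X$ is not pseudo-effective are established for \emph{uniruled} threefolds, with uniruledness proved beforehand by a separate argument (reduction to the projective case via Miyaoka--Mori/BDPP when $\ab(X)=3$, and an analysis of the algebraic reduction otherwise), rather than extracted afterwards from the general fibre of a Mori fibre space. So the implication ``$\KKK_X$ not pseudo-effective $\Rightarrow X$ uniruled'' is an input to, not an output of, the Mori fibre space theorem you invoke. Since both you and the paper treat the hard direction as a black box, this does not create a checkable gap here, but if you intend the sketch to reflect the actual logical structure of the cited proofs, the order of deductions in Step 2 should be reversed.
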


The following assertion is an analytic version of  well known Abhyankar's result (see \cite[VI.1.2]{Kollar-1996-RC}).

\begin{lemma}
\label{lemma:exc}
Let $f\colon X \dashrightarrow Y$ be a bimeromorphic map of compact complex manifolds, and let $D\subset X$ be an irreducible divisor contracted by~$f$. Then $D$
is bimeromorphic to a uniruled compact complex manifold.
\end{lemma}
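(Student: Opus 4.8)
The plan is to reduce the statement to the classical algebraic Abhyankar-type result by resolving the bimeromorphic map and working inside a smooth compact complex manifold that dominates both $X$ and $Y$. First I would choose a smooth compact complex manifold $W$ together with modifications $p\colon W\to X$ and $q\colon W\to Y$ such that $q\comp p^{-1}=f$ as meromorphic maps; such a $W$ exists by taking an appropriate resolution of the graph $\Graph_f\subset X\times Y$. The divisor $D\subset X$ being contracted by $f$ means precisely that its strict transform $\widetilde D\subset W$ maps to a subvariety of dimension $<\dim D$ under $q$. Since $p$ restricts to a proper modification $\widetilde D\to D$, it suffices to prove that $\widetilde D$ is uniruled.

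Next I would localize the problem. Let $Z=q(\widetilde D)\subset Y$ and let $G$ be a typical fiber of the morphism $q|_{\widetilde D}\colon\widetilde D\to Z$; then $\dim G=\dim\widetilde D-\dim Z\ge 1$. The idea is to show that a typical fiber $G$ is covered by rational curves, whence $\widetilde D$ itself is uniruled (rational curves sweeping out the generic fibers sweep out a dense subset of $\widetilde D$, and uniruledness only needs covering by rational curves after passing to a suitable family). To produce rational curves in the fibers of $q|_{\widetilde D}$ I would invoke the exceptional-locus structure of the modification $q\colon W\to Y$: over the generic point of $Z$, a Hironaka-type (or Moishezon-type) local factorization of the proper modification $q$ into blowups along smooth centres shows that the exceptional divisors are ruled over their images, and more precisely the positive-dimensional fibers of such blowups are projective spaces, hence rationally connected. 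This is exactly the content of the algebraic statement \cite[VI.1.2]{Kollar-1996-RC}, and the point of this lemma is merely to transcribe it into the analytic category, which is legitimate because the relevant constructions (blowups, resolution of singularities, Chow families of curves) are all available for complex spaces.

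Concretely, the key steps are: (1) build the smooth resolution $W$ with its two modifications; (2) replace $D$ by its strict transform $\widetilde D$ and reduce to showing $\widetilde D$ is uniruled; (3) factor $q$ locally over the generic point of $Z=q(\widetilde D)$ as a composite of blowups with smooth centres, and identify a component of the exceptional locus dominating $\widetilde D$; (4) observe that a positive-dimensional fiber of a blowup along a smooth centre is a projective space, producing a covering family of rational curves on the generic fiber of $\widetilde D\to Z$; (5) conclude that $\widetilde D$, and therefore $D$, is uniruled. Alternatively, one may bypass the explicit factorization by noting that $q|_{\widetilde D}$ is a surjective morphism onto $Z$ with all fibers of positive dimension, and combining this with the fact that the exceptional divisor of a modification of smooth manifolds is covered by rational curves contracted by the modification; either way, the engine is the same as in the projective case.

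The main obstacle I expect is purely expository rather than mathematical: one must make sure that the analytic analogue of the local factorization of a proper modification into blowups along smooth centres is actually available with references (this is where Hironaka's work in the analytic category enters), and that "uniruled" in the sense defined in Section~\ref{section:uniruled} really does follow from "a typical fiber of a dominant morphism is covered by rational curves". The latter is handled by noting that rational curves covering the generic fibers assemble, after base change and resolution, into a family of the required type $\pi\colon\mathcal U\to\mathcal Z$, $\varphi\colon\mathcal U\to\widetilde D$; the former is exactly the reason the lemma is phrased as "an analytic version of Abhyankar's result" — the proof is a translation, and the only care needed is to cite the analytic resolution and factorization theorems correctly.
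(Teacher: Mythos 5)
Your proposal follows essentially the same route as the paper: resolve the graph of $f$ to make the map holomorphic, dominate the resulting proper modification by a composition of blowups with smooth centres (the paper does this in two steps, via Hironaka's relative analytic Chow lemma to reduce to the blowup of a coherent ideal sheaf, and then Bierstone--Milman principalization), and conclude that the exceptional divisors, hence $D$, are uniruled because exceptional divisors of smooth blowups are projective bundles over their centres. The only difference is expository: the paper makes the intermediate Chow-lemma reduction explicit, whereas you fold it into the citation of the analytic factorization results, which you correctly identify as the one point needing careful references.
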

\begin{proof}
Replacing $X$ by a resolution of singularities of the graph of the map $f$ (see \cite{Bierstone-Milman-1997}), we may assume that $f$ is holomorphic.
According to the relative complex analytic version of the Chow lemma \cite[Corollary~2]{Hironaka1975}, the map $f$ is dominated by a projective morphism,
and we can replace $f$ by this projective morphism. In other words, we assume that $f$ is a blow up of some coherent ideal sheaf $\FFF$ on~$Y$.
Then it follows from \cite[Theorem~1.10]{Bierstone-Milman-1997} that the morphism $f$ is dominated by a composition of blow ups with smooth centers $f'\colon X'\to Y$.
All exceptional divisors of the latter morphism are uniruled compact complex manifolds.
\end{proof}

A compact complex manifold $X$ is said to be \textit{rationally connected} if
two typical points of $X$ can be connected by a rational curve. More precisely, $X$
is rationally connected if
there exist compact complex manifolds $\mathcal{U}$ and $\mathcal{Z}$ and morphisms
\[
\xymatrix@R=7pt{
&\mathcal{U}\ar[dl]_{\pi}\ar[dr]^{\varphi}&
\\
\mathcal{Z}&& X
}
\]
such that a typical fiber of $\pi$ is a  rational curve, and the induced map
\[
\varphi^2\colon \mathcal{U}\times_\mathcal{Z} \mathcal{U} \longrightarrow X \times X
\]
is surjective. This definition is taken
from \cite[Definition~IV.3.2]{Kollar-1996-RC} and is a little bit
different from \cite[Definition~3.1]{Campana1991a}.
It is easy to see that they are equivalent.
It is also clear that rational connectedness is a bimeromorphic invariant.

\begin{proposition}[{\cite[Corollary~IV.3.8]{Kollar-1996-RC}}]
\label{proposition:RC-no-pluriforms}
Let $X$ be a rationally connected compact complex manifold. Then $X$
carries no global holomorphic (pluri)forms:
\begin{equation*}
H^0(X, (\Omega_X^1)^{\otimes p})=0
\end{equation*}
for all~\mbox{$p>0$}.
\end{proposition}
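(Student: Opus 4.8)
The plan is to reproduce, in the compact complex setting, the standard argument for this fact (which is \cite[Corollary~IV.3.8]{Kollar-1996-RC} in the algebraic case); it uses neither the K\"ahler hypothesis nor algebraicity of $X$. Suppose for contradiction that $\omega$ is a nonzero element of $H^0\bigl(X,(\Omega^1_X)^{\otimes p}\bigr)$ for some $p\ge 1$. I would first observe that for any non-constant morphism $f\colon\PP^1\to X$ the pullback $f^*\omega$ is a global section of $f^*\bigl((\Omega^1_X)^{\otimes p}\bigr)=(f^*\Omega^1_X)^{\otimes p}$, and that, writing $f^*\Omega^1_X\cong\bigoplus_{i=1}^{n}\OOO_{\PP^1}(-a_i)$ by the splitting theorem for vector bundles on $\PP^1$, the bundle $(f^*\Omega^1_X)^{\otimes p}$ is a direct sum of line bundles $\OOO_{\PP^1}(-a_{i_1}-\dots-a_{i_p})$. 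Hence, if all $a_i\ge 1$ --- equivalently, if $f^*T_X$ is ample, i.e. $f$ is a \emph{very free} rational curve --- then every summand has negative degree, so $f^*\omega=0$.

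The second, and essential, step is to produce enough very free rational curves: namely, to show that a dense set of points of $X$ lie on very free rational curves. This is exactly the standard characterization of rational connectedness in characteristic zero (see \cite[\S\,IV.3]{Kollar-1996-RC}). Starting from the connecting family $\pi\colon\mathcal U\to\mathcal Z$, $\varphi\colon\mathcal U\to X$ furnished by the definition of a rationally connected manifold, the surjectivity of the induced map $\varphi^2$ together with generic smoothness forces a general fiber of $\pi$ to map onto a \emph{free} rational curve through a general point of $X$, along which, moreover, two general marked points can be deformed independently; attaching copies of such curves into a comb along a general free curve and smoothing the comb then yields, through any prescribed general point of $X$, a very free rational curve. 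Combining the two steps, $\omega$ must vanish on a dense subset of the connected manifold $X$, hence $\omega=0$, contradicting the choice of $\omega$.

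The hard part is the second step: one must check that the characteristic-zero theory of free and very free rational curves transports to the complex-analytic category --- that the spaces $\Hom(\PP^1,X)$ exist as finite-dimensional complex spaces with tangent space $H^0(\PP^1,f^*T_X)$ and obstruction space $H^1(\PP^1,f^*T_X)$, and that bend-and-break, the gluing of combs, and the smoothing of nodal rational curves with sufficiently positive normal sheaf all remain valid for compact complex manifolds. All of this is available, so the argument goes through; the rest is formal linear algebra on $\PP^1$. As an alternative one could first establish that a compact K\"ahler rationally connected manifold is projective and then quote \cite[Corollary~IV.3.8]{Kollar-1996-RC} directly, but proving that projectivity seems of comparable difficulty.
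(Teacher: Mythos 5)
Your proposal is correct and follows essentially the same route as the paper, whose entire proof is the remark that the argument of \cite[Corollary~IV.3.8]{Kollar-1996-RC} (very free curves through general points kill sections of $(\Omega_X^1)^{\otimes p}$) carries over verbatim to compact complex manifolds; you have simply unpacked that argument and flagged the same transport-of-theory issues the authors implicitly wave through. One small caution about your closing aside: in this paper projectivity of rationally connected compact K\"ahler manifolds (Theorem~\xref{theorem:RC-Kahler-algebraic}) is \emph{deduced from} this proposition, so the alternative route you mention would be circular here, and in any case the proposition is stated without a K\"ahler hypothesis.
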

\begin{proof}
This fact was proved in \cite[Corollary~IV.3.8]{Kollar-1996-RC} for projective algebraic
manifolds. The proof works in the general case as well.
\end{proof}

Let $X$ and $S$ be compact complex manifolds.
A dominant meromorphic map $f\colon X \dashrightarrow S$ is called \emph{rationally connected fibration} if its typical fiber
is irreducible and rationally connected. A rationally connected fibration
is called \emph{maximal} if for its sufficiently general fiber~$X_s$
and sufficiently general point $x\in X_s$ there exists
no rational curve~\mbox{$C\subset X$} that passes through $x$ and is not contained in $X_s$;
here by a sufficiently general point we mean a point from
the complement to the union of a countable number of proper
closed analytic subsets, and by a sufficiently general fiber
we mean a fiber over a sufficiently general point.
If such a map exists, then it is unique;
in particular, it is eqiuivariant with respect to
the action of the group of bimeromorphic automorphisms of the complex manifold.
The maximal
rationally connected fibration  exists for arbitrary compact K\"ahler
manifold~\mbox{\cite[Theorem~2.3, Remark~2.8]{Campana1992}}.

\begin{remark}\label{remark:MRC-basic}
A compact complex
manifold is rationally connected if and only if the base of its maximal rationally connected fibration
is a point.
\end{remark}

Using Proposition~\ref{prop:Kahler-proj},
it is easy to deduce the following result.
Recall that in the category of compact complex spaces there exists
a resolution of singularities (see, e.\,g., \cite{Bierstone-Milman-1997}).

\begin{corollary}
\label{cor:Kahler-proj}
Let $X$ be a compact K\"ahler manifold,
and let~\mbox{$\tau\colon X \dashrightarrow S$}
be a rationally connected fibration. Assume that $X$
is not algebraic.
Then~\mbox{$\h^{2,0}(S)\neq 0$}.
\end{corollary}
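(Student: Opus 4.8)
The plan is to contrapose: assume $\h^{2,0}(S) = 0$ and deduce that $X$ is algebraic. Since $S$ is the base of a rationally connected fibration of a compact K\"ahler manifold, and $S$ is itself dominated meromorphically by $X$ (via $\tau$ composed with a resolution), $S$ is again a compact K\"ahler manifold — more precisely, resolving the indeterminacy of $\tau$ and using that the image of a K\"ahler manifold under a surjective morphism of compact complex manifolds is again K\"ahler (by pushing forward the K\"ahler form, or by bimeromorphic invariance after passing to a smooth model of the image). Thus $S$ is a compact K\"ahler manifold with $H^0(S, \Omega_S^2) = 0$, so by Proposition~\ref{prop:Kahler-proj} the manifold $S$ is a projective algebraic variety.

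Next I would reduce to the projective case on the total space. Resolve the indeterminacy of $\tau$ to obtain a morphism $\tilde\tau\colon \tilde X \to S$ from a compact K\"ahler manifold $\tilde X$ bimeromorphic to $X$, with $\tilde\tau$ still having rationally connected typical fiber. Now $S$ is Moishezon (indeed projective), and the typical fiber of $\tilde\tau$ is a rationally connected manifold, hence in particular uniruled; one wants to conclude $\tilde X$ is Moishezon. This is where Lemma~\ref{lemma:projective-easy} comes in, but that lemma is stated for fibers that are rational \emph{curves}, not higher-dimensional rationally connected manifolds. The cleanest route is to take a maximal rationally connected fibration that is moreover fibered by rational curves in stages — or, more directly, to note that a rationally connected fiber is itself projective (being rationally connected K\"ahler, it has $H^0(\Omega^2) = 0$ by Proposition~\ref{proposition:RC-no-pluriforms}, hence is projective by Proposition~\ref{prop:Kahler-proj}), and a rationally connected projective variety is covered by rational curves, so one can find a dominant meromorphic map from a $\PP^1$-bundle tower over $S$ onto $\tilde X$. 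Iterating Lemma~\ref{lemma:projective-easy} up such a tower of $\PP^1$-fibrations shows the tower is Moishezon, and since it dominates $\tilde X$, the manifold $\tilde X$ is Moishezon as well.

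Finally, $\tilde X$ is a compact K\"ahler Moishezon manifold, hence projective by the theorem of Moishezon quoted in the excerpt (\cite[Theorem~11]{Moishezon-1967eng}). Since algebraicity is a bimeromorphic invariant for compact K\"ahler manifolds (a compact K\"ahler manifold bimeromorphic to a projective variety is Moishezon and K\"ahler, hence projective), $X$ itself is a projective algebraic variety, contradicting the hypothesis. This proves $\h^{2,0}(S) \neq 0$.

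The main obstacle is the bookkeeping in the middle step: one must pass from "typical fiber of $\tilde\tau$ is rationally connected" to a genuine tower of $\PP^1$-fibrations over $S$ to which Lemma~\ref{lemma:projective-easy} applies verbatim, or alternatively invoke a relative version of "rationally connected $+$ projective base $\Rightarrow$ Moishezon total space." The subtlety is that the rationally connected fibers need not vary in an algebraic family a priori — but they do once we know the base is projective, because over the projective base $S$ the relative cycle space / relative Hilbert scheme machinery applies fiberwise, and a rationally connected projective fiber is swept out by a family of rational curves that spreads out over (an open part of) $S$. Making this spreading-out argument clean — ensuring the covering $\PP^1$-bundle tower is an honest compact complex manifold — is the one place where some care is needed; everything else is a direct chain of the cited results.
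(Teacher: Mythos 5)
Your overall strategy (contrapose and show that $X$ is algebraic) is reasonable, but the middle step contains a genuine gap that cannot be repaired as written. You propose to dominate $\tilde X$ by a tower of $\PP^1$-fibrations over $S$ and then climb the tower with Lemma~\ref{lemma:projective-easy}. Over a typical point of $S$ such a tower restricts to a tower of $\PP^1$-fibrations over a point, i.e.\ to a unirational variety; so if it dominated $\tilde X$ compatibly with the projections to $S$, the typical fiber of $\tau$ would be unirational. The hypothesis only gives rational connectedness of the fibers, and whether every rationally connected variety is unirational is a well-known open problem. Moreover, ``covered by rational curves'' only produces a single $\PP^1$-bundle over some base of one dimension less (as in the definition of uniruledness in Section~\ref{section:uniruled}), and that base has no reason to be uniruled again, so the iteration does not close up into a tower over $S$. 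The difficulty you flag (``spreading out'' the family over $S$) is not the real obstruction. A secondary, fixable point: ``pushing forward the K\"ahler form'' does not show that $S$ is K\"ahler; one needs Varouchas-type results (the image lies in Fujiki's class $\mathscr{C}$, and a smooth bimeromorphic model is K\"ahler), as is done for surfaces in the proof of Corollary~\ref{cor:Kahler-proj1}.

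The paper's proof avoids all of this by never leaving $X$: after making $\tau$ holomorphic, a holomorphic $2$-form on $X$ vanishes on the rationally connected fibers (Proposition~\ref{proposition:RC-no-pluriforms}) and hence descends to $S$, so $\h^{2,0}(S)=0$ forces $\h^{2,0}(X)=0$, and Proposition~\ref{prop:Kahler-proj} applied directly to the K\"ahler manifold $X$ gives projectivity --- no K\"ahlerness or projectivity of $S$, and no Moishezon intermediary, is needed. Your route does go through when $\dim X-\dim S=1$, since the fibers are then honest rational curves and Lemma~\ref{lemma:projective-easy} applies verbatim; this covers the threefold application, but the corollary is stated and used (e.g.\ in Proposition~\ref{proposition:Kahler-proj2}) for fibers of arbitrary dimension.
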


\begin{proof}
Replacing $X$ with another bimeromorphic model we may assume that the map $\tau$ is holomorphic.
If \mbox{$\h^{2,0}(S)=0$}, then $\h^{2,0}(X)=0$ (because the fibers of $\tau$ are covered by rational
curves). Therefore,
$X$ is algebraic by Proposition~\ref{prop:Kahler-proj},
which contradicts our assumptions.
\end{proof}

\begin{corollary}\label{cor:Kahler-proj1}
Let $X$ be a non-algebraic compact K\"ahler manifold, and let $\tau\colon X \dashrightarrow S$ be a rationally connected fibration.
Then~\mbox{$\dim S\ge 2$}. In particular, $X$ is not rationally connected.
If $\dim S=2$, then $S$ is a K\"ahler surface with $\pg(S)>0$.
In particular, the Kodaira dimension of $S$ is non-negative.
\end{corollary}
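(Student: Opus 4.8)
The plan is to combine Corollary~\ref{cor:Kahler-proj} with the elementary structure theory of Kähler surfaces. First I would fix a bimeromorphic model on which $\tau$ is a holomorphic rationally connected fibration, which is harmless since both the statement and its hypotheses are bimeromorphic invariants (and resolution of singularities is available for compact complex spaces). The inequality $\dim S \ge 2$ is the heart of the matter: if $\dim S = 0$ then $X$ is rationally connected, hence algebraic by Corollary~\ref{cor:Kahler-proj1}'s underlying input, contradicting the hypothesis; this also gives the ``in particular'' clause. If $\dim S = 1$, then $S$ is a smooth projective curve (every compact Riemann surface is projective), so $S$ is Moishezon, and then Lemma~\ref{lemma:projective-easy} applies because a typical fiber of $\tau$ is a rationally connected curve, hence a rational curve --- forcing $X$ to be a projective algebraic variety, again a contradiction. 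Hence $\dim S \ge 2$.

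For the case $\dim S = 2$, I would argue that $S$ inherits a Kähler structure: after our reduction $\tau\colon X \to S$ is a surjective morphism from a compact Kähler manifold, and the base of such a morphism is again Kähler (one can push forward or use that the pullback of a Kähler class stays Kähler on a general fiber and patch, or invoke Varouchas-type results on images of Kähler manifolds). Then Corollary~\ref{cor:Kahler-proj} gives $\h^{2,0}(S) \ne 0$, i.e.\ $\pg(S) > 0$. Finally, a compact Kähler surface with $\pg > 0$ carries a nonzero holomorphic $2$-form; its zero divisor is an effective canonical divisor, so $\KKK_S$ is effective and hence $\varkappa(S) \ge 0$.

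The main obstacle I anticipate is the assertion that the base $S$ of the fibration is Kähler. For a general surjective holomorphic map of compact complex manifolds the image need not be Kähler, so one genuinely uses that $X$ is Kähler together with properties of $\tau$: the cleanest route is to cite that a surjective holomorphic image of a compact Kähler manifold under a map with connected fibers (in particular a fibration) is Kähler when the base is smooth, which follows from the theory of Kähler currents / Varouchas's theorem on Kähler images, or alternatively to note that $S$ is a smooth surface bimeromorphic to a quotient-type construction and handle it by classification. Everything else --- the $\dim S \ge 2$ dichotomy, and extracting $\pg(S) > 0 \Rightarrow \varkappa(S) \ge 0$ --- is routine once that point is secured.
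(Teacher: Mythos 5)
There is a genuine gap in your treatment of the case $\dim S=1$. You claim that ``a typical fiber of $\tau$ is a rationally connected curve, hence a rational curve,'' and then invoke Lemma~\ref{lemma:projective-easy}. But the fibers of $\tau$ have dimension $\dim X-\dim S=\dim X-1$, so they are curves only when $\dim X=2$; in the situation where the corollary is actually applied ($X$ a threefold), and a fortiori for general $X$, the fibers are higher-dimensional rationally connected manifolds, and Lemma~\ref{lemma:projective-easy} (which requires rational curve fibers) does not apply. The repair is not along this route: the paper disposes of both low-dimensional cases at once by observing that Corollary~\ref{cor:Kahler-proj} gives $\h^{2,0}(S)\neq 0$, which is impossible for $\dim S\le 1$ since a point or a curve carries no holomorphic $2$-forms. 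You already use Corollary~\ref{cor:Kahler-proj} for the case $\dim S=2$, so you have the right tool in hand; you simply did not apply it where it does all the work. (Your $\dim S=0$ case is correct in substance --- a rationally connected compact K\"ahler manifold is projective, cf.\ Theorem~\ref{theorem:RC-Kahler-algebraic} --- though phrasing it as following from ``Corollary~\ref{cor:Kahler-proj1}'s underlying input'' reads as circular.)

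The rest of your argument matches the paper: $\pg(S)=\h^{2,0}(S)>0$ comes from Corollary~\ref{cor:Kahler-proj}, a nonzero holomorphic $2$-form on a surface is a section of $\KKK_S$ so $\varkappa(S)\ge 0$, and the K\"ahlerness of $S$ is exactly the Varouchas-type statement you identify as the main obstacle (the paper cites \cite[Theorem~5]{Varouchas1989} together with the fact that K\"ahlerness is a bimeromorphic invariant of compact complex surfaces). Your alternative suggestion of ``patching'' pulled-back K\"ahler classes over general fibers is not a proof and should be dropped in favor of that citation.
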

\begin{proof}
According to Corollary~\ref{cor:Kahler-proj}
we have~\mbox{$\h^{2,0}(S)\neq 0$}, and therefore
the dimension of $S$ cannot be less than $2$. In particular, $S$ is not a point,
i.\,e. by Remark~\ref{remark:MRC-basic} the manifold $X$ is not rationally connected.
If $S$ is a surface, then
according to the above the inequality~\mbox{$\pg(S)>0$} holds
and so $\varkappa(S)\ge 0$.
Moreover, the surface $S$ is K\"ahler
by~\mbox{\cite[Theorem~5]{Varouchas1989}},
because  K\"ahlerness
is preserved under bimeromorphic maps of compact complex surfaces.
\end{proof}

The following assertion is a partial generalization of Corollary~\ref{cor:Kahler-proj1}.
Note that for the proofs of the main results of this paper such generality
is not needed.

\begin{proposition}[{\cite{Graber-Harris-Starr-2003}, \cite[\S3]{CampanaWinkelmann2016}}]
\label{proposition:Kahler-proj2}
Let $X$ be a compact K\"ahler manifold,
and let $\tau\colon X \dashrightarrow Y$ be the maximal rationally connected fibration.
Then~$Y$ is not uniruled.
\end{proposition}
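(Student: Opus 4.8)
**Proof proposal for Proposition (Graber–Harris–Starr type statement: the base $Y$ of the maximal rationally connected fibration of a compact Kähler manifold is not uniruled).**

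The plan is to argue by contradiction, exactly in the spirit of the algebraic Graber–Harris–Starr theorem, but being careful to stay inside the Kähler category. Suppose $Y$ were uniruled. First I would replace $X$ and $Y$ by suitable bimeromorphic models so that $\tau\colon X\to Y$ is a holomorphic fibration (using resolution of singularities of the graph, as recalled before Corollary~\ref{cor:Kahler-proj}), and so that $Y$ carries a covering family of rational curves; concretely, by the characterization of uniruledness recalled in Section~\ref{section:uniruled} there is a dominant meromorphic map from the projectivization of a rank~$2$ bundle over some base, hence through a general point $y\in Y$ passes a rational curve $C\cong\PP^1$ (after normalization). The key idea is then to show that $\tau^{-1}(C)$, restricted over a general such $C$, admits a rational section, so that the fibers of $\tau$ over points of $C$ can be connected to one another by rational curves lying in $\tau^{-1}(C)$; combining this with the rational connectedness of the fibers $X_y$ themselves produces, through a general point $x\in X_y$, a rational curve meeting a neighboring fiber $X_{y'}$ and not contained in $X_y$. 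This contradicts the maximality of $\tau$ as formulated in Section~\ref{section:uniruled}, since $x$ was chosen sufficiently general.

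The technical heart is the existence of a rational (meromorphic) section of $\tau$ over a general rational curve $C\subset Y$. Pulling back the fibration to (the normalization of) $C\cong\PP^1$ gives a fibration $X_C\to\PP^1$ whose general fiber is rationally connected; here I would invoke the Graber–Harris–Starr theorem in its genuinely needed form — a fibration over a curve with rationally connected general fiber has a section — together with its extension to the Kähler/complex-analytic setting worked out by Campana–Winkelmann, which is precisely the reference \cite[\S3]{CampanaWinkelmann2016} cited in the statement. Note that $X_C$ is again compact Kähler, being a submanifold (after resolution) of the Kähler manifold $X$; this is what makes the Kähler hypothesis enter. The section $\sigma\colon \PP^1\to X_C$ gives a rational curve in $X$ dominating $C$, hence not contracted by $\tau$.

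To finish, fix a general point $y$ on a general rational curve $C\subset Y$ and a general point $x\in X_y$. The section constructed above produces a rational curve $C'\subset X$ through some point of $X_y$ with $\tau(C')=C$; since $X_y$ is rationally connected, there is a rational curve inside $X_y$ joining $x$ to that point. Gluing these two rational curves and smoothing (or simply using that a connected chain of rational curves through a general point can be deformed to an irreducible rational curve, by the standard bend-and-break/smoothing arguments, which apply on Kähler manifolds for curves in a fixed fiber over a rationally connected fiber) yields an irreducible rational curve through $x$ that is not contained in $X_y$, contradicting maximality of $\tau$. The main obstacle is the second paragraph: making the Graber–Harris–Starr existence-of-sections statement available over a general rational base curve in the compact Kähler (non-projective) category, and ensuring the family of curves $C$ genuinely sweeps out $Y$ so that a \emph{general} point of $Y$, and hence a \emph{general} point of $X$, is reached — this is exactly the content imported from \cite{Graber-Harris-Starr-2003} and \cite{CampanaWinkelmann2016}, so in the write-up I would cite those for this step rather than reprove it.
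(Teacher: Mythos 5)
Your argument is a faithful transplant of the original Graber--Harris--Starr proof (a section of the fibration over a general rational curve in the base, followed by gluing and smoothing a comb through a prescribed general point), and it can be made to work; but it is a genuinely different route from the paper's, and it delegates the two hardest steps --- the existence of the section in the K\"ahler category and the smoothing of the chain through the general point $x$ --- entirely to the references. The paper instead makes a short reduction to the projective case: take the maximal rationally connected fibration $\phi\colon Y\dasharrow Z$ of $Y$, suppose $\dim Z<\dim Y$, and consider a typical fiber $F$ of $\phi\circ\tau$. Since $F$ fibres with rationally connected fibres over the rationally connected base $\tau(F)$, one gets $\h^{2,0}(F)=\h^{2,0}(\tau(F))=0$ by Proposition~\ref{proposition:RC-no-pluriforms}, hence $F$ is projective by Proposition~\ref{prop:Kahler-proj}; the \emph{projective} Graber--Harris--Starr theorem (their Corollary~1.3: a fibration with rationally connected base and fibres has rationally connected total space) then makes $F$ rationally connected, which contradicts the maximality of $\tau$ because $F$ strictly contains its fibres. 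This buys a proof that never needs an analytic version of the section theorem or any comb-smoothing. The same trick would streamline your write-up: your $X_C$ fibres over $\PP^1$ with rationally connected fibres, so $\h^{2,0}(X_C)=0$ and $X_C$ is projective, making the ordinary Graber--Harris--Starr section theorem applicable and the appeal to Campana--Winkelmann unnecessary; you would still have to carry out the smoothing step inside the projective variety $X_C$ to reach the \emph{given} general point $x$ (the section alone only meets $X_y$ somewhere), a step the paper's argument sidesteps by working with rational connectedness of $F$ rather than with individual curves.
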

\begin{proof}
Let $\phi\colon Y \dashrightarrow Z$ be  the maximal rationally connected fibration of $Y$.
Suppose that $\dim Z<\dim Y$. We may assume that~$\tau$ and~$\phi$ are holomorphic maps.
Also, we may assume that the manifold~$Y$ is K\"ahler, see
\cite[Theorem~5]{Varouchas1989}.
Let $F$ be a typical fiber of the composition~\mbox{$\phi\circ\tau$}.
Then $F$ is a K\"ahler manifold.
Hence there exists a rationally connected
fibration $F\to \tau(F)$ over the rationally connected base~\mbox{$\tau(F)$}.
As in the proof of Corollary~\ref{cor:Kahler-proj}, we obtain
$$
\h^{2,0}(F)=\h^{2,0}(\tau(F))=0.
$$
Thus $F$ is a projective algebraic variety.
According to~\mbox{\cite[Corollary~1.3]{Graber-Harris-Starr-2003}}, the variety $F$ is rationally connected.
Hence, we have the equality $\dim Z=\dim Y$, a contradiction.
\end{proof}

\begin{theorem}[{cf. Corollary~\ref{cor:Kahler-proj1}}]
\label{theorem:RC-Kahler-algebraic}
Let $X$ be a compact K\"ahler rationally connected manifold. Then $X$ is a projective algebraic variety.
\end{theorem}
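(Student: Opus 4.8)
The plan is to reduce to Proposition~\ref{prop:Kahler-proj} by showing that a rationally connected compact K\"ahler manifold carries no nonzero holomorphic $2$-forms. First I would invoke Proposition~\ref{proposition:RC-no-pluriforms}, which says that a rationally connected compact complex manifold $X$ satisfies $H^0(X,(\Omega_X^1)^{\otimes p})=0$ for all $p>0$. Taking $p=2$ gives $H^0(X,(\Omega_X^1)^{\otimes 2})=0$, and since $\Omega_X^2=\wedge^2\Omega_X^1$ is a quotient (equivalently, a subsheaf via antisymmetrization over $\QQ$) of $(\Omega_X^1)^{\otimes 2}$, we get $H^0(X,\Omega_X^2)=0$, i.e. $\h^{2,0}(X)=0$.

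Then I would simply apply Proposition~\ref{prop:Kahler-proj}: a compact K\"ahler manifold with $H^0(X,\Omega_X^2)=0$ is a projective algebraic variety. This finishes the proof.

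I do not expect any serious obstacle here; the statement is essentially a corollary of Propositions~\ref{prop:Kahler-proj} and~\ref{proposition:RC-no-pluriforms}, and the only minor point worth spelling out is the passage from vanishing of sections of $(\Omega_X^1)^{\otimes 2}$ to vanishing of sections of $\Omega_X^2$, which is immediate since in characteristic zero $\Omega_X^2$ is a direct summand of $(\Omega_X^1)^{\otimes 2}$ (via the idempotent $\omega\mapsto\frac12(\omega-\sigma\omega)$, $\sigma$ the transposition). Alternatively, one observes directly that the maximal rationally connected fibration of $X$ is a point by Remark~\ref{remark:MRC-basic}, so Corollary~\ref{cor:Kahler-proj1} would force algebraicity of $X$ as well (its contrapositive: a non-algebraic K\"ahler manifold with an RC fibration has base of dimension $\ge 2$, hence is not rationally connected). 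Either route works; the cleanest is the two-line argument via $\h^{2,0}(X)=0$.
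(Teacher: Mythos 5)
Your proof is correct and coincides with the paper's own argument: both deduce $H^0(X,\Omega_X^2)=0$ from Proposition~\ref{proposition:RC-no-pluriforms} via the fact that $\Omega_X^2$ is a direct summand of $(\Omega_X^1)^{\otimes 2}$, and then apply Proposition~\ref{prop:Kahler-proj}. No issues.
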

\begin{proof}
Note that the vector bundle $\Omega_X^2$ is a direct summand of the vector bundle $(\Omega_X^1)^{\otimes 2}$.
Hence it follows from Proposition~\ref{proposition:RC-no-pluriforms}
that~\mbox{$H^{0}(X, \Omega_X^2)= 0$}.
It remains to apply Proposition~\ref{prop:Kahler-proj}.
\end{proof}

There is a classification of
non-algebraic uniruled compact K\"ahler threefolds which describes them in terms of
their maximal rationally connected fibrations.
We do not use this classification, however we provide it here for
completeness.

\begin{theorem}[{\cite{Fujiki1983}}, {\cite[Theorem~9.1]{Campana-Peternell:2000}}, {\cite[Theorem~1.2]{Campana-Horing-Peternell}}]
\label{theorem:CHP}
Let~$X$ be a compact K\"ahler threefold.
Assume that $X$ is not algebraic, and~\mbox{$\varkappa(X)=-\infty$}.
Let $\eta\colon X\dashrightarrow B$ be the algebraic reduction,
and let~\mbox{$\tau\colon X \dashrightarrow S$}~be the maximal rationally connected fibration.
Then~\mbox{$\dim S=2$}, and one of the following cases occurs.
\begin{enumerate}
\item
$\ab(X) = 0$, $\ab (S) = 0$; in this case $S$ is either a complex torus or a $K3$ surface.
\item
$\ab(X) = 1$, $\ab (S) = 0$; in this case $X$ is bimeromorphic to $\PP^1 \times S$, where $S$ is either a complex torus or a $K3$ surface.
\item
$\ab(X) = 1$, $\ab(S)=1$; in this case the
algebraic reduction~\mbox{$\eta\colon X \dasharrow B$}
can be decomposed as follows
\[
\eta\colon X \overset{\tau}\dashrightarrow S \overset{\beta}\dashrightarrow B,
\]
where $\tau$ coincides with the relative Albanese map and
$\beta$ is the algebraic reduction of the surface $S$.
A typical fiber of the map~$\eta$ has the form $\PP (\OOO\oplus \LLL)$, where
$\LLL$ is a non-torsion
line bundle of degree~$0$ over an elliptic curve.

\item
$\ab (X) = 2$,
$\ab(S)=1$.  There is
the following commutative diagram
\[
\xymatrix@R=8pt{
&X\ar@{-->}[dl]_\tau\ar@{-->}[dr]^{\eta}&
\\
S\ar[dr]&&B\ar[dl]
\\
&C&
}
\]
where $C$ is a curve, $B\to C$ is a fibration with typical fiber~$\PP^1$, and $S\to C$ is the algebraic reduction.
The induced map
$$
X \dasharrow S\times_C B
$$
is dominant.
\end{enumerate}
\end{theorem}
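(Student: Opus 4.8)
The plan is to combine the general properties of the maximal rationally connected fibration recalled above with the Kodaira--Enriques classification of compact complex surfaces and with an analysis of $\PP^1$-fibrations over non-algebraic surfaces. First I would observe that $X$ is uniruled (since $\varkappa(X)=-\infty$ and $\dim X\le 3$), so the maximal rationally connected fibration $\tau\colon X\dashrightarrow S$ exists; Corollary~\ref{cor:Kahler-proj1} gives $\dim S\ge 2$, while uniruledness forces a general fiber of $\tau$ to be positive-dimensional, so $\dim S=2$ and a general fiber of $\tau$ is a rationally connected curve, that is, $\PP^1$. By the same corollary $S$ is a K\"ahler surface with $\pg(S)>0$, hence $\varkappa(S)\ge 0$ and $S$ is non-uniruled. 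Since $\tau^*$ embeds $\MMM(S)$ into $\MMM(X)$ one has $\ab(S)\le\ab(X)$, and since a general fiber of $\tau$ is a curve one has $\ab(X)\le\ab(S)+1$ (most transparently, by looking at the image $Z$ of the meromorphic map $(\tau,\eta)\colon X\dashrightarrow S\times B$: it dominates both $S$ and $B$, satisfies $\MMM(Z)=\MMM(X)$, and its fibers over $S$ are images of $\PP^1$'s, hence curves). Moreover $\ab(S)\ne 2$: otherwise $S$ would be a K\"ahler Moishezon, hence projective, surface, and then $X$ would be projective by Lemma~\ref{lemma:projective-easy}; and $\ab(X)\le 2$ since $X$ is non-algebraic. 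Thus $(\ab(X),\ab(S))$ must be one of $(0,0)$, $(1,0)$, $(1,1)$, $(2,1)$, which is exactly the list of four cases.

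Next I would identify $S$ up to bimeromorphism. Replacing $S$ by its minimal model (which changes neither $\varkappa$, $\pg$, $\ab$, nor the bimeromorphism class of the base of the maximal rationally connected fibration) and applying the Kodaira--Enriques classification, the conditions $\varkappa(S)\ge 0$, K\"ahlerness and the value of $\ab(S)$ give: if $\ab(S)=0$ then $S$ is a complex torus or a $K3$ surface; if $\ab(S)=1$ then $S$ carries an elliptic algebraic reduction $\beta\colon S\to C$ over a smooth curve $C$ whose typical fiber is an elliptic curve, and $S$ is a complex torus, a $K3$ surface, or a surface of Kodaira dimension $1$ (Kodaira surfaces being non-K\"ahler). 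This settles the description of $S$ in cases (1) and (2) and supplies the map $\beta$ used in cases (3) and (4).

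The bulk of the work is the fine structure, which I would obtain by studying the $\PP^1$-fibration $\tau\colon X'\to S$ (after replacing $X$ by a bimeromorphic model $X'$ on which $\tau$ is holomorphic), i.e.\ a conic bundle over a non-algebraic surface. When $\ab(S)=0$ the surface $S$ carries only finitely many curves, all rational (Remark~\ref{remark:not-much-interesting}, Lemma~\ref{lemma:pa-a-0-Kahler}), so the discriminant of the conic bundle is severely constrained: in case (1) this is consistent with $\MMM(X)=\CC$ and nothing further is forced, while in case (2) the equality $\ab(X)=\ab(S)+1$, together with the absence of curves on $S$ that could support reducible fibers, should yield a bimeromorphic trivialization of the fibration over $S$, so that $X$ becomes bimeromorphic to $\PP^1\times S$. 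When $\ab(S)=1$, comparing function fields produces a dominant map $B\dashrightarrow C$. In case (3) both $\MMM(S)$ and $\MMM(B)$ have transcendence degree $1$ inside $\MMM(X)$, hence $\MMM(S)=\MMM(B)$ and $B\cong C$, so the algebraic reduction $\eta$ of $X$ factors as $\beta\circ\tau$; a typical fiber of $\eta$ is then a $\PP^1$-fibration over a typical (elliptic) fiber $E$ of $\beta$, and identifying $\tau$ with the relative Albanese map and invoking Atiyah's classification of rank $2$ vector bundles on $E$ one obtains $\tau^{-1}(E)\cong\PP(\OOO_E\oplus\LLL)$, with the degree-$0$ and non-torsion conditions on $\LLL$ being exactly what keeps the algebraic dimension equal to $\ab(S)=1$ (a line bundle of non-zero degree, or a torsion one, would create extra meromorphic functions on $X$). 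In case (4), $\MMM(X)=\MMM(B)$ has transcendence degree $2$ and contains $\MMM(C)$, the induced map $B\to C$ has one-dimensional rational typical fiber, hence is a $\PP^1$-fibration, $S\to C$ is the algebraic reduction $\beta$, and the tautological map $X\dashrightarrow S\times_C B$ is dominant because over a typical point $c\in C$ it is the map from the ruled surface $\tau^{-1}(\beta^{-1}(c))$ to $\beta^{-1}(c)\times\PP^1$ determined by the ruling and by $\eta$.

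The hard part will be this last step, and within it the analysis of conic bundles over non-algebraic surfaces of algebraic dimension $0$ and $1$: one has to control their discriminant curves precisely, keep track of exactly which meromorphic functions on $X$ descend to $S$, $B$ or $C$, and pin down the line bundles entering the relative Albanese construction. This is where the finiteness of curves on $S$ when $\ab(S)=0$, and the structure of the elliptic fibration $\beta$ when $\ab(S)=1$, must be used with care; the numerical reductions in the first two steps, by contrast, are routine given the results already established.
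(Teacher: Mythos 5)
The paper does not prove Theorem~\xref{theorem:CHP} at all: it is quoted from Fujiki and from Campana--Peternell and Campana--H\"oring--Peternell, and the authors explicitly remark that the classification is stated only for completeness and is not used elsewhere in the paper. So there is no internal argument to compare yours against; your proposal has to be judged on its own.

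The first two layers of your plan are correct and essentially routine: uniruledness of $X$, the existence of $\tau$ with $\dim S=2$ and $\pg(S)>0$, the inequalities $\ab(S)\le\ab(X)\le\ab(S)+1$ (the upper bound is Ueno's inequality for a fibration with one-dimensional fibers; your ad hoc argument via the image of $(\tau,\eta)$ is close to circular and should just cite that), the exclusion of $\ab(S)=2$, and the resulting list of four pairs $(\ab(X),\ab(S))$. The identification of the minimal model of $S$ when $\ab(S)=0$ as a torus or a $K3$ is also fine.

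The genuine gaps are exactly where the content of the theorem lies, and your sketch does not close them. In case (ii) you write that the absence of curves on $S$ ``should yield a bimeromorphic trivialization''; but a standard conic bundle over such an $S$ is a $\PP^1$-bundle $\PP_S(\EEE)$, and nothing you say rules out a nontrivial $\EEE$, nor does it show that the generically finite map $(\tau,\eta)\colon X\dasharrow S\times\PP^1$ has degree one rather than giving a multisection of degree $\ge 2$. One must actually prove that $\ab(X)=1$ forces $\EEE$ to split with a trivializable twist; this is a real argument, not a formal consequence. In case (iii) the identification of $\tau$ with the relative Albanese map, the reduction to Atiyah's classification on the elliptic fibers, and the verification that only $\PP(\OOO\oplus\LLL)$ with $\LLL$ of degree $0$ and non-torsion keeps $\ab(X)=1$ (indecomposable bundles and torsion or nonzero-degree summands must each be excluded by exhibiting or counting meromorphic functions) are asserted rather than proved. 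In case (iv) the claim that $B\to C$ has rational fibers and that $X\dasharrow S\times_C B$ is dominant needs the comparison of function fields to be carried out, in the spirit of Lemma~\xref{lemma:fiber-product} of the paper. In short: the proposal is a plausible roadmap that reproduces the case division correctly, but the four structural statements are precisely the nontrivial part, and for each of them the decisive step is missing.
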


\section{Maximal rationally connected fibration}
\label{section:MRC}

In this section
we study the relation between maximal rationally connected fibrations with structure of groups of bimeromorphic
automorphisms of complex manifolds.

For a meromorphic map $\gamma\colon X \dashrightarrow Y$ of
compact complex manifolds,
by $\ind(\gamma)$ we denote its indeterminacy locus.
This is the minimal proper closed analytic subset in $X$ such
that the restriction~\mbox{$\gamma|_{X\setminus \ind(\gamma)}$} is holomorphic.
Recall that the codimension of $\ind(\gamma)$ is at least two.

Given a surjective morphism of compact complex
manifolds~\mbox{$h\colon X\to Z$}, we will consider the subgroup $\Bim(X)_{h}$
in~\mbox{$\Bim(X)$}
that consists of bimeromorphic selfmaps whose action is fiberwise with respect to $h$.
Also, we will consider the subgroup
$\Bim(X)_{h}^{\hol}$ in~\mbox{$\Bim(X)_{h}$}
consisting of bimeromorphic selfmaps $\gamma$
that are holomorphic on a typical fiber of~$h$ (the set of such fibers may depend on~$\gamma$).

\begin{lemma}
\label{lemma:fiberwise-embedding}
Let $X$ and $Z$ be compact complex manifolds, and~\mbox{$h\colon X\to Z$}~be a surjective morphism with connected fibers.
Let~\mbox{$G_i, i\in\mathbb{N}$}, be a countable family of finite subgroups in~\mbox{$\Bim(X)_{h}$}
(respectively, in~\mbox{$\Bim(X)_{h}^{\hol}$}).
Then there exists a smooth, irreducible, and reduced
fiber $F$ of the map $h$ of dimension $\dim X-\dim Z$ such that all the groups $G_i$ are embedded into the group $\Bim(F)$
(respectively, into the group~\mbox{$\Aut(F)$}).
Moreover, if $\dim Z>0$, and we are
given a countable union $\Xi$ of proper closed analytic subsets in $Z$, then the fiber $F$ can be chosen so that the point $h(F)$ does not lie on $\Xi$.
\end{lemma}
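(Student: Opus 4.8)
The plan is to produce the fiber $F$ by choosing the point $h(F)$ in $Z$ outside a suitable countable union of proper closed analytic subsets, and then restricting the elements of the groups $G_i$ to $F$. First I would record the good locus on the base coming from generic smoothness, which is valid in characteristic zero: there is a proper closed analytic subset $\Delta_0\subset Z$ such that over $Z\setminus\Delta_0$ the morphism $h$ is a submersion; for $z\notin\Delta_0$ the fiber $h^{-1}(z)$ is then smooth of pure dimension $\dim X-\dim Z$, and since the fibers of $h$ are connected it is irreducible and reduced, and moreover $h$ is an open map over $Z\setminus\Delta_0$. If $\dim Z=0$ then $Z$ is a point, $\Xi=\emptyset$, and $F=X$ works trivially, so I assume $\dim Z>0$.

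Next, for each of the countably many elements $\gamma\in\mathcal G:=\bigcup_i G_i$ I would produce a proper closed analytic subset $\Delta_\gamma\subset Z$ so that for $z\notin\Delta_0\cup\Delta_\gamma$ the following hold: $\gamma$ restricts to a bimeromorphic selfmap $\gamma|_{F_z}$ of $F_z:=h^{-1}(z)$ (respectively, in the $\hol$ case, to a biholomorphic selfmap, after enlarging $\Delta_\gamma$ so that $z$ avoids the image of the non-holomorphy locus of $\gamma$ along the fibers); the assignment $\gamma\mapsto\gamma|_{F_z}$ is compatible with composition (so on the set of elements whose datum avoids $z$, which contains all the $G_i$ since each $G_i$ is a group, it is a homomorphism into $\Bim(F_z)$, resp. into $\Aut(F_z)$ — a holomorphic selfmap of the compact manifold $F_z$ with holomorphic inverse is an automorphism); and $\gamma|_{F_z}=\mathrm{id}$ forces $\gamma=\mathrm{id}$. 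For the first two points one uses that $\ind(\gamma)$ and $\ind(\gamma^{-1})$ have codimension $\ge 2$ in $X$, hence for general $z$ meet $F_z$ in codimension $\ge 2$; passing to $\Graph_\gamma$, which lies in $X\times_Z X$ precisely because the action of $\gamma$ is fiberwise, and taking its general fiber over $Z$ shows that $\gamma|_{F_z}$ is genuinely bimeromorphic, with inverse $\gamma^{-1}|_{F_z}$. For the injectivity point, when $\gamma\neq\mathrm{id}$ the closure $W$ of $\{x\notin\ind(\gamma):\gamma(x)=x\}$ is a proper closed analytic subset of $X$; since $X\setminus W$ is dense and $h$ is open over $Z\setminus\Delta_0$, the image $h(X\setminus W)$ contains a dense open subset of $Z$, so the locus $\{z:F_z\subseteq W\}$ is nowhere dense, and one absorbs it (or its closure, which is analytic) into $\Delta_\gamma$.

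Finally I would set $\Delta:=\Delta_0\cup\Xi\cup\bigcup_{\gamma\in\mathcal G}\Delta_\gamma$, a countable union of proper closed analytic subsets of the connected, hence irreducible, manifold $Z$. Each such subset is nowhere dense and $Z$ is a Baire space, so $Z\setminus\Delta\neq\emptyset$; this is exactly the mechanism underlying the notion of a sufficiently general point used throughout the paper. Choosing any $z\in Z\setminus\Delta$ and putting $F:=h^{-1}(z)$, the fiber $F$ is smooth, irreducible and reduced of dimension $\dim X-\dim Z$, satisfies $h(F)=z\notin\Xi$, and by construction the restriction map $\gamma\mapsto\gamma|_F$ embeds each $G_i$ into $\Bim(F)$ (respectively into $\Aut(F)$ in the $\hol$ case). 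The main obstacle is the analytic bookkeeping of the previous paragraph: verifying in the complex‑analytic category that a nontrivial fiberwise bimeromorphic selfmap restricts to a nontrivial, and still bimeromorphic (respectively biholomorphic), selfmap of a general fiber. The underlying geometry is transparent, but the argument with indeterminacy loci, the fiber product $X\times_Z X$, the graph over the base, and generic smoothness must be carried out with care.
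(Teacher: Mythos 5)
Your proposal is correct and follows essentially the same route as the paper: for each non-trivial element of $\bigcup_i G_i$ one excludes the non-smooth locus of $h$, the locus of fibers contained in the indeterminacy set, and the locus of fibers on which the element acts trivially, and then uses that $Z$ is not a countable union of proper closed analytic (nowhere dense) subsets to pick a good fiber. The only cosmetic difference is that you invoke the Baire property where the paper cites the uncountability of $\CC$; these amount to the same fact.
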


\begin{proof}
Let the set $\Delta\subset Z$ consist of those points over which the morphism $h$ is not smooth.

Chose a non-trivial bimeromorphic map $\gamma$
contained in the group~\mbox{$\Bim(X)_{h}$}.
Consider the set $\nabla_\gamma\subset Z$ consisting of all the points~$P$ for which
$\ind(\gamma) \supset h^{-1}(P)$. Thus the map $\gamma$ is defined in a typical point of the fiber
$h^{-1}(P)$ over $P\in Z\setminus \nabla_\gamma$.

Consider also the subset $\Delta_\gamma\subset Z\setminus \nabla_\gamma$ consisting of all the points~$P$
such that for a typical point~\mbox{$Q\in h^{-1}(P)$} one has $\gamma(Q)=Q$.
Then for any point
$$
P\not\in\Delta\cup\overline{\Delta_\gamma}\cup\nabla_\gamma
$$
the map $\gamma$ defines an element $\gamma\vert_F$ of
the group $\Bim(F)$, where~\mbox{$F=h^{-1}(P)$}, and if $\gamma$ is non-trivial, then
the element $\gamma\vert_F$ is also non-trivial. Moreover if $\gamma\in\Bim(X)_h^{\hol}$, then
$$
\gamma\vert_F\in\Aut(F)\subset\Bim(F).
$$

It is obvious that the sets $\Delta$, $\overline{\Delta_\gamma}$, and $\nabla_\gamma$ are proper closed analytic subsets in $Z$. If~\mbox{$\dim Z>0$}, fix also a subset $\Xi$ which is a countable union of proper closed analytic subsets in $Z$.
Since the field $\mathbb{C}$ is uncountable,  $Z$ cannot
be represented as a union of a countable number of proper closed analytic subsets. Hence
the complement
$$
U=Z\setminus\left(\Delta\cup\bigcup_{\gamma\in\bigcup_i G_i, \gamma\neq\mathrm{id}}\left(\overline{\Delta_{\gamma}}\cup\nabla_\gamma\right)\cup\Xi\right)
$$
is non-empty. It remains to note that fiber of the map $h$ over any point from $U$ satisfies the desired properties.
\end{proof}

\begin{corollary}\label{corollary:Jordan-fiberwise}
Let $X$ and $Z$ be compact complex manifolds, and~\mbox{$h\colon X\to Z$}~be a surjective morphism with connected fibers.
Assume that the group $\Bim(X)_{h}$
(respectively, the group~\mbox{$\Bim(X)_{h}^{\hol}$}) is not Jordan.
Then for a typical fiber $F$ of the map $h$, the group
$\Bim(F)$ (respectively, the group~\mbox{$\Aut(F)$})
is not Jordan.
\end{corollary}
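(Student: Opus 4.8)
The plan is to reduce the statement to Lemma~\ref{lemma:fiberwise-embedding} by a standard contrapositive argument. Suppose that for a typical fiber $F$ of the map $h$ the group $\Bim(F)$ (respectively $\Aut(F)$) \emph{is} Jordan; we must deduce that $\Bim(X)_h$ (respectively $\Bim(X)_h^{\hol}$) is Jordan. Here one has to be slightly careful: a priori the Jordan constant might vary among the countably many isomorphism classes of typical fibers $F$. This is handled by noticing that, because $h$ is a morphism with connected fibers between compact complex manifolds, the typical fibers move in a connected family, so there is a single fiber that works for any prescribed countable collection of data; this is exactly what the last sentence of Lemma~\ref{lemma:fiberwise-embedding} provides.

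Concretely, argue as follows. Assume $\Bim(X)_h$ is not Jordan. Then there is a sequence of finite subgroups $G_i\subset\Bim(X)_h$, $i\in\mathbb{N}$, such that the minimal index of a normal abelian subgroup of $G_i$ tends to infinity (if no such sequence existed, the bound would be uniform and the group would be Jordan). Apply Lemma~\ref{lemma:fiberwise-embedding} to this countable family: there exists a smooth irreducible reduced fiber $F$ of $h$ of the expected dimension such that every $G_i$ embeds into $\Bim(F)$. Then $\Bim(F)$ contains finite subgroups (the images of the $G_i$) with unboundedly large minimal abelian-normal-subgroup index, so $\Bim(F)$ is not Jordan --- contradicting our assumption. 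The case of $\Bim(X)_h^{\hol}$ and $\Aut(F)$ is identical, using the parenthetical variant of Lemma~\ref{lemma:fiberwise-embedding}, which guarantees that the $G_i$ embed into $\Aut(F)$ rather than merely into $\Bim(F)$.

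The only genuine point requiring attention --- and it is already dispatched by the cited lemma --- is that one cannot simply pick an arbitrary typical fiber and hope it is bad: one must feed the \emph{specific} countable family $\{G_i\}$ witnessing the failure of the Jordan property into Lemma~\ref{lemma:fiberwise-embedding} and extract a \emph{single} fiber into which all of them embed simultaneously. Once this is observed, there is no remaining obstacle; the argument is a formal consequence of Lemma~\ref{lemma:fiberwise-embedding} together with the definition of the Jordan property. (Strictly speaking ``typical fiber'' in the statement should be read as ``there exists a fiber with the listed genericity properties'', which is precisely the output of the lemma.)
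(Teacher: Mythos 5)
Your proposal is correct and follows essentially the same route as the paper: both arguments extract a countable family of finite subgroups witnessing the failure of the Jordan property (your formulation via unbounded minimal indices of normal abelian subgroups is just a concrete way of saying, as the paper does, that the $G_i$ cannot all live in a single Jordan group), and then feed that specific family into Lemma~\xref{lemma:fiberwise-embedding} to land them all in $\Bim(F)$ (respectively $\Aut(F)$) for one fiber $F$. Your closing remark on how to read ``typical fiber'' also matches what the lemma actually delivers.
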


\begin{proof}
In the group $\Bim(X)_{h}$
(respectively, in the group~\mbox{$\Bim(X)_{h}^{\hol}$})
one can find a countable number of subgroups
$G_i$, $i\in\mathbb{N}$, such
that all~$G_i$ cannot simultaneously appear as subgroups in any Jordan group.
On the other hand, by Lemma~\ref{lemma:fiberwise-embedding} all the groups
$G_i$ are embedded into the group~\mbox{$\Bim(F)$}
(respectively, into the group~\mbox{$\Aut(F)$})
for a typical fiber $F$ of the map $h$.
\end{proof}

\begin{corollary}\label{corollary:fiberwise-RC}
Let $X$ and $Z$ be compact complex manifolds, and~\mbox{$h\colon X\to Z$}~be a surjective morphism with connected fibers.
Assume that the manifold $X$ is K\"ahler, and
a typical fiber of the map $h$ is rationally connected.
Then the group~\mbox{$\Bim(X)_{h}$} is Jordan.
\end{corollary}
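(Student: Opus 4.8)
The plan is to argue by contradiction and reduce, via Corollary~\ref{corollary:Jordan-fiberwise}, to the Jordan property for the bimeromorphic automorphism group of a rationally connected compact K\"ahler manifold; the latter in turn follows by combining Theorem~\ref{theorem:RC-Kahler-algebraic} with the known Jordan property for birational automorphism groups of rationally connected algebraic varieties.

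So assume that $\Bim(X)_{h}$ is not Jordan. Then Corollary~\ref{corollary:Jordan-fiberwise} yields that for a typical fiber $F$ of $h$ the group $\Bim(F)$ is not Jordan. Now a typical fiber $F$ is a smooth connected, hence irreducible, compact complex submanifold of $X$; being a submanifold of the K\"ahler manifold $X$, it inherits a K\"ahler metric (the restriction of the K\"ahler form of $X$). Since the genericity conditions appearing in Corollary~\ref{corollary:Jordan-fiberwise} and in the hypothesis of our assertion both single out fibers lying outside a countable union of proper closed analytic subsets of $Z$, we may choose $F$ so as to satisfy all of them simultaneously; in particular we may assume that $F$ is rationally connected. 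By Theorem~\ref{theorem:RC-Kahler-algebraic} the manifold $F$ is then a projective algebraic variety, and by the remark following the definition of $\Bim$ the group $\Bim(F)$ coincides with the group of birational automorphisms of the rationally connected projective variety $F$.

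It remains to invoke the Jordan property for birational automorphism groups of rationally connected algebraic varieties over $\CC$ (\cite[Theorem~1.1]{Birkar}; see also \cite[Theorem~1.8]{ProkhorovShramov-RC}), which shows that $\Bim(F)$ is Jordan, contradicting the conclusion of the previous paragraph. Hence $\Bim(X)_{h}$ is Jordan. The only points requiring a little care are the compatibility of the various genericity conditions on the fiber $F$ (so that a single typical fiber is at once smooth, irreducible, K\"ahler and rationally connected) and the identification of $\Bim(F)$ with the birational automorphism group of the projective variety $F$; both are straightforward given the material recalled above, and I expect no serious obstacle.
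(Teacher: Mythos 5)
Your argument is correct and is essentially the paper's own proof: assume non-Jordanness, pass to a typical fiber via Corollary~\ref{corollary:Jordan-fiberwise}, note that the fiber is K\"ahler and rationally connected hence projective by Theorem~\ref{theorem:RC-Kahler-algebraic}, and conclude by the Jordan property for birational automorphisms of rationally connected varieties. The extra care you take with the compatibility of genericity conditions and with identifying $\Bim(F)$ with the birational automorphism group is sound but left implicit in the paper.
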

\begin{proof}
Assume that the group
$\Bim(X)_{h}$ is not Jordan.
From Corollary~\ref{corollary:Jordan-fiberwise} we obtain that, for a typical
fiber $F$ of the map $h$, the group~\mbox{$\Bim(F)$} is not Jordan.
On the other hand, the complex manifold $F$ is K\"ahler and
rationally connected, and so it is a projective algebraic
variety by Theorem~\ref{theorem:RC-Kahler-algebraic}.
Thus,
the group $\Bim(F)$ is Jordan by \cite[Theorem~1.8]{ProkhorovShramov-RC}
and \cite[Theorem~1.1]{Birkar}, a contradiction.
\end{proof}

\begin{corollary}\label{corollary:fiberwise-dim-1}
Let $X$ and $Z$ be compact complex manifolds, and~\mbox{$h\colon X\to Z$}~be a surjective morphism with connected fibers.
Assume that a typical fiber of the map $h$ has dimension~$1$.
Then the group $\Bim(X)_{h}$ is Jordan.
\end{corollary}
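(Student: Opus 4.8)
The plan is to reduce to the surface case treated earlier in the excerpt, exactly as in the proof of Corollary~\ref{corollary:fiberwise-RC}. Assume, for contradiction, that the group $\Bim(X)_{h}$ is not Jordan. By Corollary~\ref{corollary:Jordan-fiberwise} applied to the morphism $h\colon X\to Z$, a typical fiber $F$ of $h$ has the property that $\Bim(F)$ is not Jordan. By hypothesis $F$ is a one-dimensional compact complex manifold, i.e.\ a compact Riemann surface, and a typical such fiber is smooth and connected (this is part of the conclusion of Lemma~\ref{lemma:fiberwise-embedding}, which underlies Corollary~\ref{corollary:Jordan-fiberwise}). So it suffices to show that the group of bimeromorphic selfmaps of a compact Riemann surface is always Jordan.

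For a compact Riemann surface $F$ of genus $g$ the group $\Bim(F)$ coincides with $\Aut(F)$, since every meromorphic map between smooth compact curves is a morphism and a bimeromorphic morphism of smooth curves is an isomorphism. Now one splits into cases according to $g$: if $g=0$ then $\Aut(F)\cong\PGL_2(\CC)$, which is Jordan by Jordan's classical theorem (it embeds into $\GL_3(\CC)$, say, via the adjoint, or one may cite Serre's result for $\PP^1$); if $g=1$ then $\Aut(F)$ is an extension of a finite group (of order dividing $24$, coming from automorphisms fixing a point) by the abelian group $F$ itself, hence Jordan with constant $J=24$; if $g\ge 2$ then $\Aut(F)$ is finite (of order at most $84(g-1)$ by the Hurwitz bound, though finiteness alone suffices), hence trivially Jordan. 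In every case $\Bim(F)=\Aut(F)$ is Jordan, contradicting the conclusion of the previous paragraph. This completes the proof.

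The argument is entirely routine and there is no real obstacle; the only point to be slightly careful about is that Corollary~\ref{corollary:Jordan-fiberwise} delivers a \emph{typical} fiber, and one must know that such a fiber is a smooth connected curve so that the genus trichotomy applies — but this is guaranteed by the smoothness and irreducibility statement built into Lemma~\ref{lemma:fiberwise-embedding}. Alternatively, and even more cheaply, one can invoke Corollary~\ref{corollary:fiberwise-RC} directly in the genus-zero situation and handle positive genus via the finiteness/almost-abelian structure above; either route gives the claim.
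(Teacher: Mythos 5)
Your argument is correct and is essentially the paper's own proof: the authors likewise apply Corollary~\ref{corollary:Jordan-fiberwise} to get a smooth irreducible one-dimensional fiber $F$ with $\Bim(F)=\Aut(F)$ not Jordan and call the contradiction ``obvious,'' which your genus trichotomy merely spells out. One small caveat: your closing alternative of invoking Corollary~\ref{corollary:fiberwise-RC} in the genus-zero case would require $X$ to be K\"ahler, which is not assumed here, but this does not affect your main argument.
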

\begin{proof}
Assume that the group
$\Bim(X)_{h}$ is not Jordan. From Corollary~\ref{corollary:Jordan-fiberwise} we obtain that, for some
smooth irreducible one-dimensional fiber $F$ of the map $h$, the group $\Bim(F)=\Aut(F)$ is not Jordan.
This gives an obvious contradiction.
\end{proof}

\begin{corollary}\label{corollary:fiberwise-dim-2}
Let $X$ and $Z$ be compact complex manifolds, and~\mbox{$h\colon X\to Z$}~be a surjective morphism with connected fibers.
Assume that a typical fiber of the map $h$ has dimension~$2$.
Then the group $\Bim(X)_{h}^{\hol}$ is Jordan.
\end{corollary}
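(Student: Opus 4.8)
The plan is to argue by contradiction, in the same way as in the proofs of Corollaries~\ref{corollary:fiberwise-RC} and~\ref{corollary:fiberwise-dim-1}: one passes to a typical fiber of $h$, which in the present situation is a compact complex \emph{surface}, and then invokes the Jordan property for the automorphism group of such a surface.

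Concretely, assume that $\Bim(X)_{h}^{\hol}$ is not Jordan. As in the proof of Corollary~\ref{corollary:Jordan-fiberwise}, choose a countable family of finite subgroups $G_i\subset\Bim(X)_{h}^{\hol}$ that cannot simultaneously be embedded into a single Jordan group, and apply Lemma~\ref{lemma:fiberwise-embedding}. This produces a smooth, irreducible and reduced fiber $F$ of $h$ of dimension $\dim X-\dim Z=2$, that is, a compact complex surface, such that every $G_i$ embeds into $\Aut(F)$; in particular $\Aut(F)$ is not Jordan.

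It then remains to derive a contradiction from the fact that the automorphism group of any compact complex surface is Jordan. When $F$ is K\"ahler this is Theorem~\ref{theorem:aut}. Since $X$ is not assumed to be K\"ahler, the surface $F$ need not be K\"ahler either, so in general one appeals to the corresponding statement for an arbitrary compact complex surface; it follows from the classification in \cite{ProkhorovShramov-Surfaces} of compact complex surfaces with non-Jordan group of bimeromorphic selfmaps (those turn out to be ruled, and for a ruled surface $F$ the subgroup $\Aut(F)\subsetneq\Bim(F)$ is still Jordan), together with the equality $\Bim(F)=\Aut(F)$ for non-ruled surfaces coming from Lemma~\ref{lemma:Bir-vs-Aut} and passage to a minimal model.

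I expect this reduction, from the K\"ahler case to arbitrary compact complex surfaces, to be the only step that is not purely formal; the rest is a mechanical repetition of the arguments already used for one-dimensional and for rationally connected fibers. It is also worth noting that the holomorphy condition built into $\Bim(X)_{h}^{\hol}$ is precisely what allows Corollary~\ref{corollary:Jordan-fiberwise} to yield an element of $\Aut(F)$ rather than merely of $\Bim(F)$; for the larger group $\Bim(X)_{h}$ the analogous statement fails, since $\Bim(\PP^1\times E)$ is not Jordan for an elliptic curve $E$.
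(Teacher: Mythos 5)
Your proposal is correct and follows essentially the same route as the paper: reduce via Corollary~\ref{corollary:Jordan-fiberwise} to a smooth two-dimensional fiber $F$ and then invoke the Jordan property of $\Aut(F)$ for an arbitrary (not necessarily K\"ahler) compact complex surface, which the paper cites directly as \cite[Theorem~1.6]{ProkhorovShramov-Surfaces}. Your additional sketch of how that theorem is deduced from the classification of surfaces with non-Jordan $\Bim$ is just an unpacking of the same citation, not a different argument.
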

\begin{proof}
Assume that the group
$\Bim(X)_{h}^{\hol}$ is not Jordan. From Corollary~\ref{corollary:Jordan-fiberwise} we obtain that, for some
smooth irreducible two-dimensional fiber $F$ of the map $h$, the group $\Aut(F)$ is not Jordan.
However this is impossible by \cite[Theorem~1.6]{ProkhorovShramov-Surfaces}.
\end{proof}

\begin{corollary}\label{corollary:fiberwise-Kahler}
Let $X$ and $Z$ be compact complex manifolds, and~\mbox{$h\colon X\to Z$}~be a surjective morphism with connected fibers. Assume that the manifold $X$ is K\"ahler.
Then the group $\Bim(X)_{h}^{\hol}$ is Jordan.
\end{corollary}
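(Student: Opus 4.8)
The plan is to argue by contradiction and reduce the statement to the already established fiberwise principle of Corollary~\ref{corollary:Jordan-fiberwise} combined with Kim's theorem on automorphism groups of compact K\"ahler manifolds (Theorem~\ref{theorem:aut}).

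Suppose that the group $\Bim(X)_h^{\hol}$ is not Jordan. Applying Corollary~\ref{corollary:Jordan-fiberwise} to the morphism $h\colon X\to Z$ in the version concerning $\Bim(X)_h^{\hol}$, we conclude that for a typical fiber $F$ of $h$ the group $\Aut(F)$ is not Jordan. As in Lemma~\ref{lemma:fiberwise-embedding}, such a fiber $F$ can be chosen to be a smooth, irreducible, reduced compact complex manifold of dimension $\dim X-\dim Z$, so in particular $F$ is a compact complex submanifold of $X$. Then I would note that $F$ is K\"ahler: the restriction to $F$ of a K\"ahler form on $X$ is a closed positive $(1,1)$-form, hence a K\"ahler form on $F$. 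Now Theorem~\ref{theorem:aut} shows that $\Aut(F)$ is Jordan, which contradicts the previous sentence. Therefore $\Bim(X)_h^{\hol}$ is Jordan.

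I do not expect any real obstacle here, since the assertion is an immediate corollary of the preceding results. The only point that requires a little care is to use the correct variant of the fiberwise reduction — the one passing to $\Aut(F)$, available precisely because the selfmaps in $\Bim(X)_h^{\hol}$ are holomorphic on a typical fiber — rather than the variant passing to $\Bim(F)$, which would instead force one to establish that $F$ is a projective algebraic variety (as was done, for instance, in the proof of Corollary~\ref{corollary:fiberwise-RC}).
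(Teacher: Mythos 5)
Your proposal is correct and coincides with the paper's own proof: both argue by contradiction, invoke the $\Bim(X)_h^{\hol}$ version of Corollary~\ref{corollary:Jordan-fiberwise} to pass to $\Aut(F)$ for a typical smooth fiber $F$, and then contradict Theorem~\ref{theorem:aut} using the fact that $F$, being a compact complex submanifold of the K\"ahler manifold $X$, is itself K\"ahler. Your explicit remark that the restriction of the K\"ahler form to $F$ is again a K\"ahler form is a detail the paper leaves implicit, but the argument is the same.
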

\begin{proof}
Assume that the group
$\Bim(X)_{h}^{\hol}$ is not Jordan.
From Corollary~\ref{corollary:Jordan-fiberwise}
we obtain that, for some
smooth irreducible fiber $F$ of the map $h$, the group $\Aut(F)$ is not Jordan.
However this is impossible by Theorem~\ref{theorem:aut} because $F$ is a compact K\"ahler manifold.
\end{proof}

Now we prove the main result of this section.

\begin{proposition}\label{proposition:MRC-not-Jordan}
Let $X$ be a compact complex
threefold, and $S$ be a compact complex surface.
Let $\tau\colon X \dashrightarrow S$ be a rationally connected fibration. Assume that the surface $S$ has non-negative Kodaira dimension,
and $S$ is not an algebraic surface.
Finally, assume that the group $\Bim(X)$ is not Jordan.
Then the surface~$S$ is
bimeromorphic to a complex torus.
\end{proposition}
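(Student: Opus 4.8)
The plan is to push the failure of the Jordan property from $X$ down to the surface $S$ along the fibration $\tau$, and then to appeal to the classification of compact complex surfaces with large groups of bimeromorphic selfmaps. First I would record that a typical fibre of $\tau$, being an irreducible rationally connected curve, is isomorphic to $\PP^1$, so $\tau$ is a conic bundle. Since $\varkappa(S)\ge 0$, the surface $S$ is not uniruled; hence no $\gamma\in\Bim(X)$ can map a general fibre of $\tau$ to a curve whose image in $S$ is again a curve, for otherwise, letting the fibre vary, $S$ would be covered by rational curves. Therefore $\gamma$ maps a general fibre of $\tau$ onto a fibre of $\tau$, so $\tau\circ\gamma$ factors through $\tau$ and $\gamma$ descends to a bimeromorphic selfmap $\bar\gamma$ of $S$; applying this to $\gamma^{-1}$ as well, I get a homomorphism $\rho\colon\Bim(X)\to\Bim(S)$ with $\ker\rho=\Bim(X)_{\tau}$.

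Next, after replacing $X$ and $S$ by suitable bimeromorphic models I may assume that $\tau$ is a surjective morphism with connected one-dimensional fibres; then $\Bim(X)_{\tau}$ is Jordan by Corollary~\ref{corollary:fiberwise-dim-1}. Applying Lemma~\ref{lemma:group-theory} to $1\to\Bim(X)_{\tau}\to\Bim(X)\to\rho(\Bim(X))\to 1$, I see that if $\rho(\Bim(X))$ had bounded finite subgroups, then $\Bim(X)$ would be Jordan, contrary to hypothesis; hence $\rho(\Bim(X))$, and a fortiori $\Bim(S)$, has unbounded finite subgroups. As $S$ is non-algebraic it is not ruled, so $\varkappa(S)\ge 0$ automatically, and combining Proposition~\ref{proposition:BFS-for-surfaces} (for $\varkappa(S)\neq 0$) with the Kodaira--Enriques classification (for $\varkappa(S)=0$), and discarding the algebraic types (bielliptic and Enriques surfaces), I conclude that $S$ is bimeromorphic to a complex torus, to a $K3$ surface, to a Kodaira surface, or to a surface of Kodaira dimension $1$; it remains to eliminate the last three possibilities.

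In each remaining case $S$ is bimeromorphic to a minimal non-ruled surface $S_0$, so $\Bim(S)=\Aut(S_0)$ by Lemma~\ref{lemma:Bir-vs-Aut}. If $S_0$ is a $K3$ surface, then $\Aut(S_0)$ acts faithfully on the fixed lattice $H^2(S_0,\ZZ)$, hence has bounded finite subgroups, contradicting the previous step. If $S_0$ is a Kodaira surface, or satisfies $\varkappa(S_0)=1$, let $\phi\colon S_0\to C$ be its algebraic reduction, respectively its pluricanonical fibration; by Lemma~\ref{lemma:Kodaira-base-subgroup} the image of $\Aut(S_0)$ in $\Aut(C)$ is finite, so a finite-index subgroup $\Gamma_0\subset\rho(\Bim(X))$ acts fibrewise over $C$ and still has unbounded finite subgroups. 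The fibres of $\phi$ are elliptic curves, and since $S$ is non-algebraic every line bundle on $S_0$ restricts to degree $0$ on them: a line bundle of non-zero degree on a general fibre would, after twisting by a sufficiently positive pull-back from $C$, yield an effective divisor meeting that fibre positively, forcing $S_0$ to be algebraic by Lemma~\ref{lemma:two-intersecting-curves} (compare Corollary~\ref{corollary:two-intersecting-curves}). Using this I would argue that the conjugation action of $\Gamma_0$ on $\Bim(X)_{\tau}$ carries no ``Heisenberg-type'' structure, so that the preimage of $\Gamma_0$ in $\Bim(X)$ is Jordan; having finite index in $\Bim(X)$, this contradicts the assumption. (When $X$ is K\"ahler the Kodaira case is dismissed at once, since then $S$ is K\"ahler by Corollary~\ref{cor:Kahler-proj1}, whereas Kodaira surfaces are not.) Hence $S$ is bimeromorphic to a complex torus.

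The main obstacle is this last argument: one must rule out the ``Zarhin-type'' mechanism by which a conic bundle over a complex torus acquires a non-Jordan group of bimeromorphic selfmaps. What makes it work is precisely the degeneracy forced by non-algebraicity --- the elliptic fibres of $\phi$ support only numerically trivial line bundles, so the cocycle that would otherwise produce Heisenberg subgroups in $\Bim(X)$ vanishes; converting this into the assertion that the relevant finite-index subgroup of $\Bim(X)$ is Jordan, which amounts to controlling $\Aut$ of the conic bundle $\tau$ relatively over $C$, is where the real work lies.
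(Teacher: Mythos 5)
Your setup is sound and matches the paper's: equivariance of $\tau$ because $S$ is non-ruled, the exact sequence $1\to\Bim(X)_\tau\to\Bim(X)\to\Bim(S)$, Jordanness of $\Bim(X)_\tau$ via Corollary~\ref{corollary:fiberwise-dim-1}, hence unbounded finite subgroups in $\Bim(S)$ by Lemma~\ref{lemma:group-theory}, and the reduction via Proposition~\ref{proposition:BFS-for-surfaces} to eliminating the Kodaira surface and $\varkappa(S)=1$ cases (your extra care with $K3$ surfaces, using faithfulness of the $\Aut$-action on $H^2$, is harmless and correct). But the elimination of those two cases is exactly the heart of the proposition, and there you have a genuine gap: you assert that ``the conjugation action of $\Gamma_0$ on $\Bim(X)_\tau$ carries no Heisenberg-type structure, so that the preimage of $\Gamma_0$ in $\Bim(X)$ is Jordan,'' and you yourself flag this as ``where the real work lies.'' The observation that line bundles restrict to degree $0$ on the elliptic fibres of $\phi$ does not by itself control the preimage of $\Gamma_0$ in $\Bim(X)$: that preimage is a group of bimeromorphic selfmaps of a threefold fibred over the curve $C$ with two-dimensional fibres, and a priori nothing bounds it. Killing one potential source of Heisenberg subgroups is not a proof that the group is Jordan.

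The paper closes this gap by a different and decisive move: it passes to the composite fibration $\psi=\phi\circ\tau\colon X\to C$ and considers $\Bim(X)_\psi$. Since $\ab(S)=1$, Corollary~\ref{corollary:two-intersecting-curves} forces every curve of $S$ into a fibre of $\phi$; as $\ind(\gamma)$ has codimension at least $2$, its image $\tau(\ind(\gamma))$ lies in finitely many fibres of $\phi$, so every $\gamma\in\Bim(X)_\psi$ is holomorphic on a typical fibre of $\psi$, i.e. $\Bim(X)_\psi=\Bim(X)_\psi^{\hol}$. This group is Jordan by Corollary~\ref{corollary:fiberwise-dim-2} (Jordan property for automorphism groups of compact complex surfaces), and the image of $\Bim(X)$ in $\Aut(C)$ is finite by Lemma~\ref{lemma:Kodaira-base-subgroup}, so Lemma~\ref{lemma:group-theory} makes $\Bim(X)$ Jordan, a contradiction. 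To complete your argument you would need to supply this step (or an equivalent one); as written, the proof does not go through.
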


\begin{proof}
Since $\varkappa(S)\ge 0$, the surface $S$ is not ruled.
Therefore, the map~$\tau$ is equivariant with respect to the whole group $\Bim(X)$.
Hence there is an exact sequence of groups
\begin{equation*}
1\to \Bim(X)_{\tau}\to\Bim(X)\to\Bim(S).
\end{equation*}

We may assume that the map $\tau$ is holomorphic.
Since the group~\mbox{$\Bim(X)$} is not Jordan, the group $\Bim(S)$
has unbounded finite subgroups.
This follows from Corollary~\ref{corollary:fiberwise-RC} (or Corollary~\ref{corollary:fiberwise-dim-1}) and Lemma~\ref{lemma:group-theory}.

Since the Kodaira dimension of the surface $S$ is non-negative
and the group $\Bim(S)$ has unbounded finite subgroups, $S$ is bimeromorphic either to a complex torus,
or to a bielliptic surface, or to a Kodaira surface, or to a surface of Kodaira dimension $1$, see
Proposition~\ref{proposition:BFS-for-surfaces}.
Since the surface~$S$ is not algebraic, it cannot be bimeromorphic to a bielliptic surface.
We may assume that the surface $S$ is minimal.

Suppose that either $S$ is a Kodaira surface, or $\varkappa(S)=1$.
In the former case  consider its algebraic reduction
$\phi\colon S\to C$. In the latter case
consider the pluricanonical fibration $\phi\colon S\to C$
(which also coincides with the algebraic reduction under our assumptions).
In both cases put $\psi=\phi\circ \tau$.
These maps form a $\Bim(X)$-equivariant commutative diagram
\begin{equation}\label{eq:psi-phi-h}
\vcenter{
\xymatrix{
X\ar[d]_\tau\ar[rd]^{\psi}& \\
S\ar[r]_{\phi}&C
}}
\end{equation}

Consider the
subgroups $\Bim(X)_{\psi}$ and $\Bim(X)_{\psi}^{\hol}$ in $\Bim(X)$.
Let $\gamma$ be an arbitrary element of $\Bim(X)_{\psi}$.
Since $\ab(S)=1$, by
Corollary~\ref{corollary:two-intersecting-curves}
it does not contain any curves which are surjectively projected to $C$.
Therefore, $\tau(\ind(\gamma))$ is contained in a union of a
finite number of fibers of the map $\phi$,
and $\ind(\gamma)$ is contained in a union of a finite number of fibers of
the map $\psi$. In other words, one has
$\gamma\in\Bim(X)_{\psi}^{\hol}$. This implies that the group $\Bim(X)_{\psi}$
coincides with its subgroup $\Bim(X)_{\psi}^{\hol}$.
In particular, the group $\Bim(X)_{\psi}$ is Jordan by  Corollary~\ref{corollary:fiberwise-dim-2}.

Since the surface $S$ is minimal, one has $\Bim(S)=\Aut(S)$ by Lemma~\ref{lemma:Bir-vs-Aut}.
Consider the subgroup $\Aut(S)_{\phi}$ in $\Aut(S)$ consisting
of those automorphisms of $S$ whose action is fiberwise with respect to $\phi$.
There are exact sequences of groups
\begin{equation*}
1\to \Bim(X)_{\psi}\to\Bim(X)\to \Gamma_{\psi}
\end{equation*}
and
$$
1\to \Aut(S)_{\phi}\to\Aut(S)\to \Gamma_{\phi},
$$
where the groups $\Gamma_{\psi}$ and $\Gamma_{\phi}$ are subgroups in $\Aut(C)$.
Since the diagram~\eqref{eq:psi-phi-h} is commutative, we see that $\Gamma_{\psi}\subset\Gamma_{\phi}$.
On the other hand, the group $\Gamma_{\phi}$ is finite by Lemma~\ref{lemma:Kodaira-base-subgroup}.
Now we conclude from Lemma~\ref{lemma:group-theory} that the group $\Bim(X)$ is Jordan.
The obtained contradiction shows that $S$ is not a Kodaira surface, and $\varkappa(S)\neq 1$, so
$S$ is a complex torus.
\end{proof}

\begin{remark}
If in the notation of Proposition~\ref{proposition:MRC-not-Jordan} the manifold $X$ is K\"ahler,
and $\tau$ is the maximal rationally connected fibration,
then some other hypotheses are satisfied automatically.
Namely, the surface~$S$ has non-negative Kodaira dimension
by Corollary~\ref{cor:Kahler-proj1} (and also $S$ is non-algebraic
by Lemma~\ref{lemma:projective-easy} under the additional
assumption that the dimension of $X$ is equal to~$3$).
Moreover, in this case the surface $S$ is K\"ahler by Corollary~\ref{cor:Kahler-proj1},
and so in the proof we do not need to consider the case where $S$ is a Kodaira surface.
Also, if $X$ is K\"ahler, we can use Corollary~\ref{corollary:fiberwise-Kahler} instead of Corollary~\ref{corollary:fiberwise-dim-2} in the proof of Proposition~\ref{proposition:MRC-not-Jordan}.
\end{remark}

\begin{remark}
If in the notation of Proposition \ref{proposition:MRC-not-Jordan} the surface $S$ is a complex torus,
then~\mbox{$\tau\colon X \dashrightarrow S$}
is nothing but the Albanese map.
In particular, in this case $\tau$ is a holomorphic map.
\end{remark}

\section{Conic bundles over non-algebraic surfaces}
\label{section:conic-bundles}

In this section
we study the properties of conic bundles over non-algebraic compact complex surfaces.
Many results of this section are stated and proved
in greater generality than we need in the present paper.

\begin{definition}\label{definition:conic-bundle}
A proper surjective morphism~\mbox{$f\colon X\to S$} of
complex manifolds is called \textit{a conic bundle} if any fiber of~$f$
is isomorphic to a conic in~$\PP^2$. A conic bundle $f\colon X\to S$ is said to be \textit{standard} if, for any prime divisor $D \subset S$, its inverse image $f^{-1}(D)$ is irreducible.
\end{definition}

\begin{remark}
According to Definition~\ref{definition:conic-bundle},
for a conic bundle~\mbox{$f\colon X\to S$}
the anticanonical line bundle $\KKK_X^{-1}$ is $f$-ample.
In particular, $f$ is a projective morphism.
\end{remark}

For a conic bundle $f\colon X\to S$ the set of points
in which the map~$f$ is not smooth
forms a divisor $\Delta$ on~$S$ which is called the \textit{discriminant divisor} or the \textit{degeneracy divisor}.
The following assertion is well known,
see, e.\,g.,
\cite[Corollary~3.3.3]{P:rat-cb:r}, \cite[(3.3.2)]{P:rat-cb:r}, \cite[Corollary~3.9.1]{P:rat-cb:r}, and~\mbox{\cite[(3.8.2)]{P:rat-cb:r}}.

\begin{lemma}
\label{lemma:discriminant}
Let $f\colon X\to S$ be a standard conic bundle over a compact complex
surface $S$, and let $\Delta$~be its discriminant curve.
Suppose that $\Delta\neq\varnothing$. Then $\Delta$ has only ordinary double singularities.
Moreover,
the fiber of the map $f$ over a point $o\in \Delta$ is reduced (and has two irreducible components)
if and only if $o$ is a smooth point of the curve~$\Delta$.

Assume furthermore that there exists an irreducible
component~\mbox{$\Delta_1\subset \Delta$} which is a smooth rational curve.
Then the intersection of~$\Delta_1$ and~\mbox{$\Delta-\Delta_1$}
is non-empty and consists of an even number of points.
\end{lemma}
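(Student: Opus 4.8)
The plan is to reduce everything to the local and fibrewise structure of a standard conic bundle and then to use an intersection-theoretic parity argument on the normalization of $\Delta_1$. First I would recall the local classification of conic bundles: over a point $o\in\Delta$ the fibre $f^{-1}(o)$ is either two distinct lines (if $o$ is a smooth point of $\Delta$) or a double line (if $o$ is a singular point of $\Delta$); the standardness hypothesis, together with the requirement that $X$ be smooth, forces $\Delta$ to have at worst ordinary double points, since more degenerate singularities of $\Delta$ would force either a non-reduced inverse image of a prime divisor or a singularity of the total space. This part is essentially local analytic geometry and is covered by the cited references; I would state it and move on. The key point I actually need is the dichotomy: over $\Delta_{\mathrm{sm}}$ the fibre has two components, and over $\Sing\Delta$ it does not.

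Next I would analyze what happens over the smooth rational component $\Delta_1$. Restrict $f$ to $Y:=f^{-1}(\Delta_1)$, and let $\nu\colon \widetilde\Delta_1\to\Delta_1$ be the normalization (here $\widetilde\Delta_1\cong\PP^1$ since $\Delta_1$ is already smooth, so in fact $\nu$ is an isomorphism, but I keep the notation uniform). Over the open set $\Delta_1\setminus\Sing\Delta$, the surface $Y$ decomposes fibrewise into two rulings, and these two rulings get exchanged precisely by the monodromy of a double cover $\widetilde Y\to\Delta_1$ branched exactly over $\Delta_1\cap(\Delta-\Delta_1)$ (the points where $\Delta_1$ meets the rest of the discriminant are exactly the points where a third degeneration occurs and the two components of the fibre come together). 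A double cover of $\PP^1$ must be branched over an even number of points by the Riemann--Hurwitz formula (equivalently, because $\widetilde Y$ is a smooth curve, $\deg\KKK_{\widetilde Y}$ is even, and it equals $-2+\#(\text{branch points})$). Hence $\#\big(\Delta_1\cap(\Delta-\Delta_1)\big)$ is even.

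It remains to see that this intersection is non-empty. Here I would argue by contradiction: if $\Delta_1\cap(\Delta-\Delta_1)=\varnothing$, then $\Delta_1$ is a connected component of $\Delta$, every point of $\Delta_1$ is a smooth point of $\Delta$, and so over all of $\Delta_1$ the fibre of $f$ splits into two lines. Then the double cover $\widetilde Y\to\Delta_1\cong\PP^1$ is unramified, hence $\widetilde Y$ is either $\PP^1\sqcup\PP^1$ (two disjoint sections) or a single $\PP^1$ mapping $2:1$; the latter is excluded since an unramified cover of $\PP^1$ is trivial. So $Y\to\Delta_1$ carries two disjoint sections, each a smooth rational curve $C\subset X$. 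One now produces a curve on $S$ violating non-algebraicity: the image $f(C)=\Delta_1$ is a curve with $\Delta_1^2$ computable from the conic bundle formula, and combined with a general fibre of $f$ (or with a general member of an ample-enough family on $S$ — but $S$ need not be algebraic, so instead I use that $C^2\ge 0$ on the surface $Y$ together with $C\cdot(\text{fibre})=1$), one gets, via Lemma \ref{lemma:two-intersecting-curves} applied on a suitable smooth surface dominating $\Delta_1$, that $S$ would have to be algebraic — contradiction. Alternatively, and more cleanly, I would invoke that a component of the discriminant of a standard conic bundle cannot be disjoint from the rest of $\Delta$ unless it disconnects in a way incompatible with $X$ being a manifold over a non-algebraic base; I expect the intersection-theoretic argument on $Y$ to be the cleanest.

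\medskip
\noindent\textbf{Main obstacle.} The parity part is robust — it is just Riemann--Hurwitz for the monodromy double cover — so the delicate step is the non-emptiness claim, where one must genuinely use that $S$ is a \emph{non-algebraic} compact complex surface rather than a projective one; the standard projective arguments (e.g. computing $\Delta_1\cdot(\Delta-\Delta_1)$ via the conic bundle discriminant formula and an ampleness input) are not directly available, so I would route it through Lemma \ref{lemma:two-intersecting-curves} by exhibiting, on an auxiliary surface, two curves with non-negative self-intersection meeting positively, forced into existence by the hypothetical pair of disjoint sections over $\Delta_1$.
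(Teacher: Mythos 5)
The paper does not actually prove this lemma: it is quoted as well known, with references to \cite[Corollary~3.3.3, (3.3.2), Corollary~3.9.1, (3.8.2)]{P:rat-cb:r}, so the only comparison available is with the standard arguments behind those citations. Your treatment of the local structure (nodes of $\Delta$, reduced fibres exactly over smooth points) and your parity argument are correct and are exactly the classical ones: the components of the fibres over $\Delta_1$ are parametrized by a double cover of $\Delta_1\cong\PP^1$ branched precisely at $\Delta_1\cap(\Delta-\Delta_1)$, and Riemann--Hurwitz forces an even number of branch points.

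The non-emptiness step, however, contains a genuine gap. First, the lemma makes \emph{no} assumption that $S$ is non-algebraic --- it is stated for an arbitrary compact complex surface --- so any argument that derives a contradiction from ``$S$ would have to be algebraic'' (via Lemma~\ref{lemma:two-intersecting-curves}) cannot work: for algebraic $S$ there is nothing to contradict, yet the conclusion must still hold. Second, even granting non-algebraicity, the route you sketch does not produce the required configuration on $S$: the two disjoint sections live on the double cover (equivalently, give two rulings of $f^{-1}(\Delta_1)\subset X$), and their images in $S$ are both equal to the single curve $\Delta_1$; a non-algebraic surface may well contain one smooth rational curve, so no contradiction with Lemma~\ref{lemma:two-intersecting-curves} is forthcoming. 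The correct and much shorter argument uses only standardness, which you never invoke at this point: if $\Delta_1\cap(\Delta-\Delta_1)=\varnothing$, the double cover is unramified over $\PP^1$, hence splits into two disjoint sections, hence $f^{-1}(\Delta_1)$ is the union of the two corresponding ruled surfaces and is therefore \emph{reducible} --- contradicting the definition of a standard conic bundle, which requires $f^{-1}(D)$ to be irreducible for every prime divisor $D\subset S$. Replacing your last paragraph by this one line closes the gap; the rest of the proposal stands.
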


\begin{remark-definition}\label{remark:minimal:surface}
Let $S$ be a non-algebraic compact complex surface,
and let $S_{\min}$ be its minimal model.
Then there is
at most a finite number of  singular curves on $S_{\min}$
by Remark~\ref{remark:not-much-interesting}.
Hence there exists a sequence of blowups $\bar S\to S_{\min}$ such that
any  curve on~$\bar S$ has only
ordinary double singularities,
and $\bar S$ satisfies the universal property:
if~$S'$ is a compact complex surface bimeromorphic to $\bar S$
and such that any  curve on $S'$ has only
ordinary double singularities, then one
has a modification~\mbox{$S'\to \bar S$}. In particular, such a surface $\bar S$ is unique.
We call it an \emph{almost minimal surface}, or the
\emph{almost minimal model} of the surface $S$.
\end{remark-definition}

\begin{remark}\label{remark:torus-easy}
For applications in the framework of this paper we need only conic bundles
over surfaces bimeromorphic to complex tori. It is easy to see that
the almost minimal model of any such  surface is exactly the
complex torus.
\end{remark}

\begin{remark}\label{remark2:minimal:surface}
If a compact complex surface $S$ is non-algebraic, then by Lemma~\ref{lemma:pa-a-0}
any curve on $S_{\min}$ (and also on $\bar S$ and $S$) has arithmetic genus at most~$1$.
If furthermore $S$ is K\"ahler and $\ab(S)=0$, then any connected curve on
$S_{\min}$, $\bar S$, and $S$ has arithmetic genus~$0$
by Lemma~\ref{lemma:pa-a-0-Kahler}.
\end{remark}

The projective case of the following assertion was proved in~\cite{Sarkisov-1982-e}.
An alternative proof was given in~\cite{Avilov-conic-r}; it is based on a result concerning 
three-dimensional extremal contraction, see e.g.~\mbox{\cite[Corollary~7.7.1]{MoriProkhorov2019}}.

\begin{proposition}[see {\cite[Proposition 3.8]{Lin2017}}]
\label{proposition:standartization}
Let $X$ be
a compact complex threefold,
and $S$ be a non-algebraic compact complex surface.
Let~\mbox{$f\colon X\dashrightarrow S$} be a
rationally connected fibration.
Let $\bar S$ be the almost minimal model of the surface $S$. Then there exists a commutative diagram
\[
\xymatrix@R=17pt{
X\ar@{-->}[d]_{f}\ar@{-->}[r] & \bar X\ar[d]^{\bar f}
\\
S\ar@{-->}[r]^{} & \bar S
}
\]
where the
horizontal arrows are bimeromorphic maps, and $\bar f\colon \bar X\to \bar S$ is a standard conic bundle. Here the discriminant curve of the conic bundle $\bar f$ is either empty, or is a disjoint union of smooth elliptic curves, rational curves with one ordinary double point, and combinatorial cycles
of smooth rational curves.
\end{proposition}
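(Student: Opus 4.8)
The strategy is to reduce to the projective/algebraic case, which is already handled by Sarkisov's standardization theorem and the birational classification of standard conic bundles, and then to control the discriminant curve using the constraints imposed by non-algebraicity of the base.

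First I would replace $S$ by its almost minimal model: by Remark-Definition~\ref{remark:minimal:surface} and Remark~\ref{remark:torus-easy} this is a well-defined compact complex surface $\bar S$ on which every curve has only ordinary double singularities, and since $S$ is non-algebraic, so is $\bar S$. Using a resolution of the graph of $f$ together with the relative Chow lemma (as in the proof of Lemma~\ref{lemma:exc}) I may assume $f\colon X\to S$ is an honest morphism whose general fiber is $\PP^1$; composing with $S\dasharrow \bar S$ and again resolving, I get a (meromorphic, then holomorphic after a model change) fibration into rational curves over $\bar S$. Now the key point is \emph{localization}: over the smooth locus $\bar S\setminus\Sing(\bar S)$, and in fact over a Zariski-open subset of any small analytic polydisc, the situation is classical relative MMP for a fibration of rational curves over a (locally) algebraic or Stein surface base, so one can run the relative minimal model program fiberwise and obtain a standard conic bundle structure. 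The statement I would invoke here is precisely \cite[Proposition~3.8]{Lin2017}, which is the analytic analog of Sarkisov's theorem \cite{Sarkisov-1982-e} (with the alternative argument of \cite{Avilov-conic-r} via three-dimensional extremal contractions, \cite[Corollary~7.7.1]{MoriProkhorov2019}, available); this produces $\bar f\colon\bar X\to\bar S$ standard with $\bar X$ bimeromorphic to $X$ and $\bar S$ the almost minimal model.

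It then remains to analyze the discriminant curve $\Delta\subset\bar S$ of $\bar f$. By Lemma~\ref{lemma:discriminant}, $\Delta$ has only ordinary double points, a reduced fiber (two components) sits over a smooth point of $\Delta$, and any irreducible component of $\Delta$ that is a smooth rational curve meets the rest of $\Delta$ in a nonzero even number of points. Now I bring in the constraint that $\bar S$ is non-algebraic: by Remark~\ref{remark2:minimal:surface} (i.e. Lemma~\ref{lemma:pa-a-0}), every connected curve on $\bar S$ has arithmetic genus at most $1$. I would argue componentwise. An irreducible component of $\Delta$ has geometric genus $0$ or $1$; if $1$, having only nodes forces arithmetic genus $\ge 1$, hence $=1$, hence it is a smooth elliptic curve and, by connectedness/arithmetic-genus bounds, it meets no other component of $\Delta$ — so it is a connected component of $\Delta$, a smooth elliptic curve. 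The remaining components have geometric genus $0$; such a component is either a smooth rational curve or a rational curve with a single node (a further node would push the arithmetic genus of a connected subcurve above $1$). For the smooth rational components, Lemma~\ref{lemma:discriminant} forces each to meet $\Delta$ minus itself in an even, positive number of points; combining this with the genus-$\le 1$ bound on connected subcurves (a connected chain of $\PP^1$'s meeting transversally at $\ge 2$ points would have arithmetic genus $\ge 1$, with equality exactly when it forms a single cycle, and three such curves forming a ``theta'' or adding an extra node would exceed $1$), one sees that the connected components of $\Delta$ built out of smooth rational curves are exactly combinatorial cycles of smooth rational curves (``wheels''), where a cycle of length $1$ is a nodal rational curve. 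Hence $\Delta$ is empty or a disjoint union of smooth elliptic curves, nodal rational curves, and cyclic chains of smooth rational curves, as claimed.

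The main obstacle I expect is not the combinatorial classification of $\Delta$ — that is a fairly mechanical consequence of Lemma~\ref{lemma:discriminant} and the $\pa\le 1$ bound — but rather the passage to a \emph{standard} model in the analytic, non-algebraic category: one must be careful that the relative MMP can be carried out over a base that is only locally Stein/algebraic, that the resulting conic bundle is globally a morphism to the chosen model $\bar S$, and that standardness (irreducibility of $\bar f^{-1}(D)$ for every prime divisor $D$) can be achieved globally rather than merely over a general point of each $D$. This is exactly the content of \cite[Proposition~3.8]{Lin2017}, so in the write-up I would cite it for that reduction and devote the proof to deducing the shape of $\Delta$ from Lemma~\ref{lemma:discriminant} and Remark~\ref{remark2:minimal:surface}.
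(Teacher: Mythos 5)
Your analysis of the discriminant curve is essentially the paper's: combine Lemma~\ref{lemma:discriminant} with the bound $\pa\le 1$ of Remark~\ref{remark2:minimal:surface}, and the componentwise case analysis you sketch is exactly what is needed. The gap is in the first half. You invoke \cite[Proposition~3.8]{Lin2017} as if it produced a standard conic bundle over the prescribed base $\bar S$; it does not. It yields a standard conic bundle $f'\colon X'\to S'$ fiberwise bimeromorphic to $f$ over \emph{some} bimeromorphic model $S'$ of $S$, and your closing paragraph explicitly delegates the passage to the chosen model $\bar S$ to that same citation, which is a misattribution. Descending from $S'$ to $\bar S$ while preserving standardness is a genuine extra step, and it is where most of the paper's proof lives: one takes the modification $\sigma\colon S'\to\bar S$ provided by the universal property of Remark-Definition~\ref{remark:minimal:surface}, finds a $(-1)$-curve $C'$ in its exceptional locus, and performs an elementary transformation of the conic bundle over $C'$ followed by the contraction of $C'$ (\cite[Lemma~4]{Iskovskikh-mat-cb-1991-ru}, \cite[Proposition~8.5]{P:rat-cb:r}), iterating until the base becomes $\bar S$.

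Moreover, this descent is not independent of the discriminant analysis: the elementary transformation is only available when $C'$ either is disjoint from the discriminant $\Delta'$ or is a component of it with $C'\cdot(\Delta'-C')=2$, and verifying that one of these two cases always occurs uses Lemma~\ref{lemma:discriminant} together with the $\pa\le 1$ bound applied on the \emph{intermediate} model $S'$. So the classification of the discriminant has to be carried out before the contraction, on $S'$, not only at the end on $\bar S$ as in your plan. As written, your argument establishes a weaker statement --- a standard conic bundle over some model of $S$ with the stated discriminant --- rather than the proposition as formulated, whose content is precisely that the base can be taken to be the almost minimal model.
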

\begin{proof}
According to \cite[Proposition~3.8]{Lin2017}, there exists a standard conic bundle
$$
f' \colon X' \to S'
$$
fiberwise bimeromorphic to $f$, where $S'$ is some bimeromorphic model of the surface $S$.
Let $\Delta'$ be the discriminant curve of $f'$. By Lemma~\ref{lemma:discriminant} and Remark~\ref{remark2:minimal:surface}
each connected component $\Delta^{\prime(i)}\subset\Delta'$ is either
a smooth elliptic curve, or a rational curve with one ordinary double point,
or a combinatorial cycle of smooth rational curves.

According to Remark-Definition~\ref{remark:minimal:surface}, we have a modification
$$
\sigma\colon S' \to \bar S.
$$
Assume that $S'\neq \bar S$.
Then the
$\sigma$-exceptional locus contains at least one $(-1)$-curve $C'$. Again according to
Lemma~\ref{lemma:discriminant} and Remark~\ref{remark2:minimal:surface},
the curve $C'$ either does not intersect the discriminant divisor $\Delta'$, or is contained in $\Delta'$, and then
$$
C'\cdot (\Delta'-C')=2.
$$
In this case we can apply to $X^\sharp$ a sequence of bimeromorphic transformations described in \cite[Lemma~4]{Iskovskikh-mat-cb-1991-ru} or \cite[Proposition~8.5]{P:rat-cb:r}, contract all $(-1)$-curves on $S'$ and obtain a standard conic bundle over~$\bar{S}$.
\end{proof}

\begin{remark}
Proposition~\ref{proposition:standartization} cannot be directly generalized to the case of algebraic
surfaces. For example, over any projective algebraic surface~$S$ there exist non-standard conic bundles
which are not bimeromorphic to standard conic bundles over a minimal model
of~$S$.
\end{remark}

Proposition~\ref{proposition:standartization} implies the following

\begin{corollary}\label{corollary:aX-aS-1}
Let $X$ be a compact complex threefold,
and $S$ be a non-algebraic compact complex surface
bimeromorphic to a complex torus~$S_0$.
Let
$\tau\colon X\dasharrow S$ be a
rationally connected fibration.
Assume that
$X$ is not bimeromorphic to the projectivization of a holomorphic rank $2$ vector bundle on $S_0$.
Then the group $\Bim(X)$ is Jordan.
\end{corollary}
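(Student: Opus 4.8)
The plan is to reduce the statement to the standard conic bundle case provided by Proposition~\ref{proposition:standartization}, and then apply Lemma~\ref{lemma:P1-bundle-over-a-torus}. First I would invoke Proposition~\ref{proposition:standartization}: since $S$ is non-algebraic and bimeromorphic to the complex torus $S_0$, its almost minimal model $\bar S$ is exactly $S_0$ by Remark~\ref{remark:torus-easy}, so we obtain a bimeromorphic model $\bar X$ of $X$ carrying a standard conic bundle $\bar f\colon\bar X\to S_0$ with discriminant curve $\Delta$ that is either empty or a disjoint union of smooth elliptic curves, nodal rational curves, and combinatorial cycles of smooth rational curves. Since $\Bim(X)=\Bim(\bar X)$, it suffices to prove that $\Bim(\bar X)$ is Jordan.

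Next I would argue that $\Delta$ must in fact be empty. Indeed, on the complex torus $S_0$ every effective divisor has non-negative self-intersection and any two curves that meet do so positively, so by Lemma~\ref{lemma:two-intersecting-curves} (or directly because $S_0$ is non-algebraic of algebraic dimension $0$ or $1$ and hence contains no rational curves by Remark~\ref{remark:not-much-interesting}) there are no rational curves on $S_0$. This rules out nodal rational curves and combinatorial cycles of smooth rational curves among the components of $\Delta$. A smooth elliptic curve $\Delta_i\subset S_0$ is possible only if $\ab(S_0)\ge 1$; but even then $\Delta_i^2=0$ and, by the last part of Lemma~\ref{lemma:discriminant} applied after one further blowup (or by a direct parity/intersection argument as in the cited references on standard conic bundles over minimal surfaces), one derives a contradiction with the minimality of $S_0$, since the discriminant curve of a standard conic bundle over a \emph{minimal} surface cannot contain an isolated fibre component of this kind without forcing a $(-1)$-curve on the base. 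Hence $\Delta=\emptyset$, so $\bar f\colon\bar X\to S_0$ is a smooth conic bundle, i.e. a $\PP^1$-bundle in the analytic sense; in particular it is flat with typical fibre $\PP^1$.

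Finally, apply Lemma~\ref{lemma:P1-bundle-over-a-torus} to $\bar f\colon\bar X\to S_0$. By hypothesis $X$, and hence $\bar X$, is not bimeromorphic to the projectivization of a holomorphic rank~$2$ vector bundle on $S_0$, so the hypothesis of that lemma is met and we conclude that $\Bim(\bar X)=\Bim(X)$ is Jordan, as required.

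\textbf{Main obstacle.} The delicate point is showing that $\Delta=\emptyset$: ruling out the rational components is immediate from the absence of rational curves on a non-algebraic torus, but excluding smooth elliptic components of the discriminant of a \emph{standard} conic bundle over the minimal surface $S_0$ requires the structure theory of standard conic bundles (the parity statement in Lemma~\ref{lemma:discriminant} and the way discriminant components interact with $(-1)$-curves on the base), rather than a purely formal argument. Alternatively, if an elliptic discriminant component cannot be excluded outright, one would instead run the fiberwise argument of Section~\ref{section:MRC} — using that $\Bim(\bar X)_{\bar f}$ is Jordan by Corollary~\ref{corollary:fiberwise-dim-1} and that the image of $\Bim(\bar X)$ in $\Bim(S_0)=\Aut(S_0)$ is controlled — but the cleanest route is to force $\Delta=\emptyset$ and reduce directly to Lemma~\ref{lemma:P1-bundle-over-a-torus}.
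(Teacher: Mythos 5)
Your overall route---reduce to a standard conic bundle $\bar f\colon\bar X\to S_0$ via Proposition~\ref{proposition:standartization} and Remark~\ref{remark:torus-easy}, then invoke Lemma~\ref{lemma:P1-bundle-over-a-torus}---is exactly the paper's proof, which is two lines long. However, your middle paragraph, where you try to force $\Delta=\emptyset$, is both unnecessary and not actually established. Lemma~\ref{lemma:P1-bundle-over-a-torus} does not require a smooth $\PP^1$-bundle: it applies to any \emph{flat} surjective morphism onto the torus whose \emph{typical} fiber is isomorphic to $\PP^1$, and a standard conic bundle is such a morphism (all fibers are conics, hence one-dimensional, so the map is flat, and the general fiber is a smooth conic). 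So the lemma applies to $\bar f$ directly, whatever $\Delta$ is.

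The argument you offer for excluding smooth elliptic components of $\Delta$ is a genuine gap if one insists on that step. The last part of Lemma~\ref{lemma:discriminant} is a parity statement about \emph{rational} components of $\Delta$ and says nothing about elliptic ones, and there is no statement in the paper (nor an obvious argument) to the effect that a standard conic bundle over a minimal non-algebraic torus of algebraic dimension $1$ must have empty discriminant: the elliptic fibers of the algebraic reduction of $S_0$ are a priori available as discriminant components, cf.\ Theorem~\ref{theorem:CHP}. Your own fallback remark points in the right direction; the clean fix is simply to delete the $\Delta=\emptyset$ paragraph and apply Lemma~\ref{lemma:P1-bundle-over-a-torus} to the standard conic bundle as it stands.
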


\begin{proof}
By Proposition~\ref{proposition:standartization} and Remark~\ref{remark:torus-easy} we may assume
that the surface $S=S_0$ is a complex torus,
and the map $\tau$ is holomorphic and it is a conic bundle.
In this case the desired assertion follows from Lemma~\ref{lemma:P1-bundle-over-a-torus}.
\end{proof}

The following proposition refines some results obtained in \cite{Lin2017}.
We do not use it in the proofs of the main results of the present paper.

\begin{proposition}\label{proposition:Delta-to-Delta}
Let $f\colon X\to S$ be a standard conic bundle,
where $X$ is a compact complex
threefold, and $S$ is an almost minimal non-algebraic compact complex surface. Let $\Delta\subset S$ be the discriminant curve of the conic bundle $f$. Then any bimeromorphic map~\mbox{$\varphi\colon X\dashrightarrow X$} fits into the following commutative diagram
\begin{equation}
\label{eq:diagram:map}
\vcenter{
\xymatrix@R=17pt{
X\ar[d]_f\ar@{-->}[r]^{\varphi} &X\ar[d]^f
\\
S\ar[r]^{\delta} &S
}}
\end{equation}
where $\delta$ is an isomorphism. Moreover, the map~$\varphi$ cannot contract components of the divisor $f^{-1}(\Delta)$
and so~\mbox{$\delta(\Delta)=\Delta$}.
\end{proposition}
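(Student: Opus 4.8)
The plan is to exploit the rigidity of the conic bundle structure over a surface that has very few curves. First I would reduce to the case where $\varphi$ is a morphism by resolving the graph of $\varphi$; write $W\to X$ for a resolution dominating the graph, so that both projections $p\colon W\to X$ and $q\colon W\to X$ are morphisms, and let $g=f\circ p$ and $g'=f\circ q$ be the two induced maps $W\to S$. The key point is that any divisor $E\subset W$ that is $p$-exceptional is, by Lemma~\ref{lemma:exc}, bimeromorphic to a uniruled surface, hence covered by rational curves; the same holds for $q$-exceptional divisors. Since $S$ is non-algebraic it contains only finitely many rational curves (Remark~\ref{remark:not-much-interesting}), so the image in $S$ of any such $E$ is a curve, and in fact $g(E)$ and $g'(E)$ lie in this finite set of rational curves. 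The first real step is therefore to show that $g$ and $g'$ induce the \emph{same} map on $S$ after the $p$- and $q$-exceptional loci are blown down — equivalently, that there is an isomorphism $\delta\colon S\to S$ with $f\circ\varphi=\delta\circ f$ as meromorphic maps. This follows because the generic fiber of $f$ is a rational curve, so a generic fiber of $g$ is a rational curve mapping by $q$ into $X$; as $f$ is the maximal rationally connected fibration up to the finitely many vertical rational curves, $q$ sends a generic fiber of $g$ into a fiber of $f$, giving the rational map $\delta$, and symmetry gives its inverse.

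The second step is to promote $\delta$ from a bimeromorphic self-map of $S$ to an honest automorphism. Here I would invoke that $S$ is \emph{almost minimal}: by Remark-Definition~\ref{remark:minimal:surface} it carries the universal property among bimeromorphic models all of whose curves have only ordinary double singularities. Since $\delta$ is a bimeromorphic self-map, applying the universal property to $S'=S$ in both directions forces $\delta$ to be a modification in both directions, hence an isomorphism; alternatively one uses that $S$ is non-ruled ($\ab(S)=0$ or $1$, so $\varkappa(S)\ge 0$), invokes Lemma~\ref{lemma:Bir-vs-Aut} to get $\Bim(S)=\Aut(S)$, so $\delta\in\Aut(S)$. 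This gives diagram~\eqref{eq:diagram:map} with $\delta$ an isomorphism.

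The third and main step is to show that $\varphi$ does not contract any component of $f^{-1}(\Delta)$, and consequently $\delta(\Delta)=\Delta$. Suppose some component $T$ of $f^{-1}(\Delta)$ is contracted by $\varphi$. On the resolution $W$, pull $T$ back to its strict transform $\tilde T$; then either $\tilde T$ is $q$-exceptional, or $q(\tilde T)$ is again a component of $f^{-1}(\Delta)$ in the target — the dichotomy coming from whether $\varphi$ contracts $T$ to a curve/point. The structure of the discriminant curve described in Lemma~\ref{lemma:discriminant} and Proposition~\ref{proposition:standartization} is crucial: each connected component of $\Delta$ is a smooth elliptic curve, a rational curve with one node, or a combinatorial cycle of smooth rational curves, and over a smooth point of $\Delta$ the fiber of $f$ is a \emph{reduced} pair of rational curves meeting at a point, while over a node of $\Delta$ the fiber is non-reduced. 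The idea is that contracting a component of $f^{-1}(\Delta)$ would, after pushing forward, either destroy the numerical/arithmetic-genus constraints that Lemma~\ref{lemma:pa-a-0} imposes on curves in $S$ (the image of the contracted locus would have to be a curve of the wrong genus, or would have to be a rational curve not already present among the finitely many rational curves in the vertical locus), or would force $\delta^{-1}(\Delta)\ne\Delta$, contradicting the equivariance that makes $\delta$ genuinely map the conic-bundle structure to itself. I expect this third step to be the main obstacle: one has to track precisely how the standard conic bundle structure degenerates along $\Delta$ and rule out each possible exceptional behavior of $\varphi$ using the paucity of curves on a non-algebraic surface together with the standardness (irreducibility of $f^{-1}(D)$ for every prime divisor $D$). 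Once no component of $f^{-1}(\Delta)$ is contracted, the equality $\delta(\Delta)=\Delta$ is immediate since $\delta$ carries the discriminant locus of $f$ to that of $f\circ\varphi^{-1}=\delta^{-1}\circ f$, which is again $\Delta$.
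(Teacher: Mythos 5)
Your first two steps (existence of the diagram and the fact that $\delta$ is an isomorphism) match the paper: the finiteness of rational curves on a non-algebraic surface makes $f$ the maximal rationally connected fibration, hence $\varphi$ is fiberwise, and the universal property of the almost minimal model upgrades $\delta$ to an automorphism. (Your alternative route via Lemma~\ref{lemma:Bir-vs-Aut} is shakier, since that lemma requires $S$ to be \emph{minimal}, whereas an almost minimal surface is in general a blowup of the minimal model; stick with the universal property.)

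The third step, however, is a genuine gap, and it is precisely the content of the proposition. You announce two possible mechanisms for a contradiction --- an arithmetic-genus obstruction via Lemma~\ref{lemma:pa-a-0}, or a violation of $\delta^{-1}(\Delta)=\Delta$ --- and neither can work. The contracted divisor is $f^{-1}(\Lambda)$ for a component $\Lambda$ of $\Delta$, and its image in $S$ under the relevant projections is $\Lambda$ itself, a curve of arithmetic genus at most $1$ that is already present on $S$; no genus constraint is violated. And the equality $\delta(\Delta)=\Delta$ is a \emph{consequence} of the non-contraction statement, so invoking it is circular. The paper's argument is quite different and uses the conic bundle structure in an essential way: if $\varphi$ contracts $D'=f^{-1}(\Lambda)$, then $\varphi^{-1}$ contracts $D''=f^{-1}(\Lambda)$ onto an irreducible curve $\Theta'$ dominating $\Lambda$; standardness (irreducibility of $D'$) forces the double cover of $\Lambda$ parameterizing components of the degenerate fibers not to split, so $\Theta'$, and hence $\ind(\varphi)$, must meet \emph{both} components $C_1'$, $C_2'$ of a typical fiber over $\Lambda$. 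One then restricts to the preimage of a transversal arc $\Upsilon$ through a typical point of $\Lambda$, obtaining a bimeromorphic map of ruled surfaces over $\Upsilon$ that contracts the fiber $C''$; a minimal resolution of this map contains a $(-1)$-curve that is exceptional for one projection but not the other, and the self-intersection count ($C_i'^2=-1$) forces the indeterminacy locus to miss one of $C_1'$, $C_2'$ --- contradicting the previous step. None of these ideas (the monodromy/splitting argument, the incidence of $\ind(\varphi)$ with both fiber components, the transversal-slice elementary-transformation analysis) appears in your sketch, so the main claim remains unproved.
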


\begin{proof}
Since
the surface $S$ is non-algebraic, the number of rational curves on $S$
is finite by Remark~\ref{remark:not-much-interesting}. Therefore,
$f\colon X\to S$ is the maximal rationally connected
fibration. Hence the map $\varphi$ is fiberwise, i.\,e. we have the diagram \eqref{eq:diagram:map} with a bimeromorphic map $\delta$. According to the universal property (see Remark-Definition~\ref{remark:minimal:surface}) this map is an isomorphism.

For convenience, we write our map as follows:
$$
\varphi\colon X' \dashrightarrow X'',
$$
where $X'=X''=X$.
Put
$$
f'=\delta \circ f,\quad \Delta'=\delta(\Delta),\quad f''=f,\quad \Delta''=\Delta.
$$
Thus, $\Delta'$ (respectively, $\Delta''$) is the discriminant curve of the conic bundle
$f'\colon X'\to S$ (respectively,~\mbox{$f''\colon X''\to S$}), and the
diagram~\eqref{eq:diagram:map} is rewritten as the following
commutative diagram
\[
 \xymatrix@R=13pt {
 X'\ar[dr]_{f'}\ar@{-->}[rr]^{\varphi}&&X''\ar[dl]^{f''}
 \\&S&
 }
\]
Here the action of $\varphi$ is fiberwise.
Assume that $\varphi$ contracts an irreducible surface~\mbox{$D'=f^{\prime -1}(\Lambda)$}, where $\Lambda$~is an irreducible component of the discriminant curve $\Delta'$.
Then the map $\varphi^{-1}$ must contract an irreducible surface~\mbox{$D''=f^{\prime\prime -1}(\Lambda)$}.
It follows from the commutativity of the diagram  that
$$
\Theta'=\varphi^{-1}(D'')\subset X'
$$
is an irreducible curve, and  $f'(\Theta')=\Lambda$.

Let $o$ be a typical point of the curve $\Lambda$,
and let~\mbox{$C'=f^{\prime -1}(o)$} and~\mbox{$C''=f^{\prime\prime -1}(o)$}
be the fibers over the point $o$ of the maps~$f'$ and~$f''$, respectively.
Since $o$ is a smooth point of the discriminant curve~$\Delta'$,
by Lemma~\ref{lemma:discriminant} the conic~$C'$ has two irreducible
components~$C_1'$ and~$C_2'$ meeting transversally at one point.
If $\Theta'$ meets only one of the component of the fiber  $C'$, then the same holds for nearby fibers over points of~$\Lambda$.
Consider the double cover parameterizing irreducible components of the fibers over the
points of $\Lambda$, that is, the cover by the corresponding component of the Douady space,
which is compact by \cite{Fujiki1982}, because the (singular) surface $D'=f^{\prime -1}(\Lambda)$ is bimeromorphic to
a ruled surface (and in particular is Moishezon).
According to the above, this double cover splits. We conclude that
the surface $D'$ is reducible, which gives a contradiction.

Therefore,
both curves~$C_1'$ and $C_2'$  must intersect the curve~$\Theta'$, hence they must intersect the indeterminacy locus~\mbox{$\ind(\varphi)\supset \Theta'$}.
On the other hand, we have an inclusion~\mbox{$\ind(\varphi)\supset \Theta'$}.
Indeed, otherwise the map $\varphi$ is defined at a typical point of the curve~$\Theta'$.
Considering the graph
\[
 \xymatrix@R=11pt {
 &\hat X\ar[dl]_{p'}\ar[dr]^{p''}&
 \\
 X'\ar@{-->}[rr]^{\varphi}&&X''
 }
\]
of the map $\varphi$, we see that the map $p'^{-1}$ is defined at a typical point of the curve $\Theta'$, and its inverse morphism $p'$
contracts the irreducible divisor $\hat D\subset\hat X$, that is the proper transform of the divisor $D''$ with respect to~$p''$, on the curve $\Theta'$.
Clearly, this is impossible.
Therefore, one has~\mbox{$\ind(\varphi)\supset \Theta'$}, and thus both curves $C_1'$ and $C_2'$  intersect $\ind(\varphi)$.

Consider the germ of an analytic
curve $\Upsilon \subset S$ transversally meeting~$\Lambda$ at $o$.
Consider the surfaces
$$
V'=f^{\prime -1}(\Upsilon)\subset X',\qquad V''=f^{\prime\prime -1}(\Upsilon)\subset X''.
$$
Thus, $V'$ is the inverse image of $V''$ with respect to
the map $\varphi$, and we have a bimeromorphic map $\varphi_V\colon V' \dashrightarrow V''$. The natural projections
$$
f_V'\colon V'\to\Upsilon, \quad
f_{V}''\colon V''\to \Upsilon
$$
are fibrations whose typical fiber is a smooth rational curve, and the
curves~\mbox{$C'=V'\cap D'$} and $C''=V''\cap D''$ are fibers over
the point $o$ of the maps~$f_V'$ and $f_V''$, respectively. Note that the curve $C''$ is contracted by the map $\varphi_V^{-1}$, so that the map $\varphi_V$
is not holomorphic. On the other hand, its indeterminacy locus $\ind(\varphi_V)$ is contained in $C'$; moreover,
in a neighborhood of  $C'$ the set $\ind(\varphi_V)$ coincides with $\ind(\varphi)$.
Consider a resolution of indeterminacies:
\begin{equation}\label{eq:Vprime}
\vcenter{
\xymatrix@R=13pt {
&\tilde V\ar[dl]_p\ar[dr]^q&\\
V'\ar[dr]_{f_V'}\ar@{-->}[rr]^{\varphi_V}&& V''\ar[dl]^{f_V''}\\
&\Upsilon&
}}
\end{equation}
We may assume that this resolution is minimal, i.\,e. it has the minimal possible  Picard number.
Then none of the $(-1)$-curves on $\tilde V$ can be simultaneously
contracted by both maps~$p$ and~$q$.
Hence there exists a $(-1)$-curve $\tilde E$ which is $q$-exceptional but not $p$-exceptional.
Since the map $\varphi_V^{-1}$ is an isomorphism on the complement to the
curve~$C''$,
we conclude from the commutativity of the diagram~\eqref{eq:Vprime}
that~\mbox{$p(\tilde E)$} is contained in $C'$.
Then $p(\tilde E)$ coincides
with one of the irreducible components of $C'$, say, one
has~\mbox{$p(\tilde E)=C_1'$}.
Since
$$
C_1^{\prime 2}=-1=\tilde E^2,
$$
the map $p$ must be an isomorphism near $C_1'$.
Thus, the indeterminacy locus
$\ind(\varphi_V)= \varphi_{V}^{-1}(C'')$
does not intersect $C_1'$. Therefore, the indeterminacy
locus $\ind(\varphi)$, which coincides
with~$\varphi^{-1}(D'')$ in a neighborhood of~$C'$,
also does not intersect $C_1'$.
But this contradicts the observation made above.
Hence the map $\varphi$ cannot contract the divisor $f^{\prime -1}(\Lambda)$. This proves the proposition.
\end{proof}

\section{Case $\ab(S)=0$}
\label{section:a-0}

In this section we consider compact complex threefolds
having a structure of a rational curve fibration  over a surface of algebraic dimension~$0$.

\begin{remark}
\label{remark:X-vs-X-prime}
Let
$f\colon X\to S$ be a standard conic bundle over a compact K\"ahler surface
of algebraic dimension $0$. Then $f$ is a smooth morphism such that
all the fibers of~$f$ are isomorphic to~$\PP^1$,
see~\mbox{\cite[Proposition~3.10]{Lin2017}}.
For proofs of our results we do not need  this assertion: we will use a weaker (but more general)
Proposition~\ref{proposition:standartization}.
\end{remark}

The following assertion is a particular case of
\cite[Corollary~3.1]{BZ19}. We provide its proof for the
reader's convenience.

\begin{lemma}
\label{lemma:X-vs-X-prime}
Let $S$ be a compact complex surface that does not contain any curves
(and in particular has
algebraic dimension~$0$). Let~\mbox{$f\colon X\to S$}
be a standard conic bundle.
Then the group $\Bim(X)$ acts biholomorphically on
$X$.
\end{lemma}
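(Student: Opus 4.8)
The plan is to show that any $\varphi\in\Bim(X)$ is in fact a biholomorphic automorphism of $X$. First I would observe that since $S$ contains no curves at all, $S$ is in particular non-algebraic, so $f\colon X\to S$ is the maximal rationally connected fibration of $X$ (a typical fiber is $\PP^1$, and there are no rational curves in $S$ through which a rational curve of $X$ could project nontrivially). By the uniqueness and equivariance of the MRC fibration, $\varphi$ acts fiberwise with respect to $f$, fitting into a commutative square
\[
\xymatrix@R=14pt{
X\ar[d]_f\ar@{-->}[r]^{\varphi}&X\ar[d]^f\\
S\ar[r]^{\delta}&S
}
\]
with $\delta\in\Bim(S)$. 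Since $S$ has no curves, it is minimal (no $(-1)$-curves) and non-ruled, so $\Bim(S)=\Aut(S)$ by Lemma~\ref{lemma:Bir-vs-Aut}; hence $\delta$ is an automorphism of $S$.

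Next I would show $\varphi$ has no indeterminacy. Suppose $\ind(\varphi)\neq\varnothing$. Its image $f(\ind(\varphi))$ is an analytic subset of $S$ of dimension at most $1$; since $S$ contains no curves, $f(\ind(\varphi))$ is a finite set of points, so $\ind(\varphi)$ is contained in finitely many fibers of $f$, each isomorphic to $\PP^1$. Resolve the indeterminacy of $\varphi$ by a modification $p\colon\hat X\to X$, $q\colon\hat X\to X$ with $q=\varphi\circ p$. The $p$-exceptional locus maps into $\ind(\varphi)$, hence into finitely many fibers of $f\circ p$ over points of $S$; every divisorial component $E$ of the $p$-exceptional locus is uniruled (Lemma~\ref{lemma:exc}) and is contracted by $p$ to a subset of a $\PP^1$-fiber of $f$. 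The key point is that $\varphi^{-1}$ cannot contract any such $E$: if $q$ contracted $E$, then $q(E)\subset\ind(\varphi^{-1})$ would again lie in finitely many fibers of the second copy of $f$, so $f\circ q$ would be constant on $E$, meaning $E\subset(f\circ p)^{-1}(\text{pt})$ would be a divisor inside the surface $(f\circ p)^{-1}(\text{pt})$ — impossible for dimension reasons once one notes the typical fiber of $f\circ p$ over a point of $\ind(\varphi)$'s image is two-dimensional while $E$ has dimension two but must be a \emph{proper} subset of it. I would make this precise by working over the germ of $S$ at a point $o\in f(\ind(\varphi))$ and reducing to the surface situation: restricting to $V'=f^{-1}(\Upsilon)$ for a general smooth curve germ $\Upsilon\ni o$ gives a bimeromorphic map of smooth $\PP^1$-fibrations over $\Upsilon$ whose indeterminacy forces, by the minimal-resolution argument exactly as in the proof of Proposition~\ref{proposition:Delta-to-Delta}, a contradiction with $f^{-1}(o)\cong\PP^1$ (a $\PP^1$-fiber is irreducible, so it cannot be both the image of a $(-1)$-curve under one side and survive on the other).

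The cleanest route, which I expect to be the one intended, is simpler still: since $f$ is a smooth $\PP^1$-bundle (all fibers $\cong\PP^1$, $S$ having no curves — here one may either invoke Remark~\ref{remark:X-vs-X-prime} or argue directly that $\Delta=\varnothing$ because the discriminant divisor is a curve in $S$ and $S$ has no curves), $X$ is a holomorphic $\PP^1$-bundle over $S$ Zariski-locally, and a fiberwise bimeromorphic selfmap restricts on a typical fiber to an element of $\Aut(\PP^1)=\PGL_2(\CC)$; patching these together with $\delta\in\Aut(S)$, and using that $\varphi$ is already holomorphic over the open dense $S\setminus f(\ind(\varphi))$, one extends $\varphi$ across the missing fibers by Hartogs/normality (the graph closure $\Graph_\varphi\subset X\times X$ projects isomorphically to each factor because over each point of $S$ the fiber of the graph is a single point — a graph of a linear automorphism of $\PP^1$). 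The main obstacle is ruling out indeterminacy, i.e.\ showing $\Graph_\varphi\to X$ is an isomorphism and not merely a modification; the structural input that makes this work is that $S$ has \emph{no curves at all}, so nothing in the base can absorb an exceptional divisor, and the fiberwise analysis over a transverse curve germ (as in Proposition~\ref{proposition:Delta-to-Delta}) closes the argument.
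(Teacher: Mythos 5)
Your setup is the same as the paper's: $f$ is the maximal rationally connected fibration, $\varphi$ acts fiberwise, and $\delta\in\Aut(S)$ by Lemma~\ref{lemma:Bir-vs-Aut}. But the crux of the lemma is ruling out indeterminacy of $\varphi$ along curves contained in fibers of $f$, and none of your three arguments for this step works. Your ``dimension reasons'' argument breaks down precisely in the dangerous case: if a $p$-exceptional divisor $E$ of the resolution $\hat X$ is contracted by \emph{both} $p$ and $q$ (a flop-like small modification over a point $o\in S$), then $E$ is a two-dimensional component of $(f'\circ p)^{-1}(o)$, which is perfectly possible and yields no contradiction --- what your argument actually shows is only that a $p$-exceptional divisor cannot survive under $q$ (since $\varphi^{-1}$ contracts no divisors), which is exactly the flop situation rather than a refutation of it. The appeal to the transverse-curve-germ analysis of Proposition~\ref{proposition:Delta-to-Delta} does not transfer either: that argument hinges on the fiber over a discriminant point being \emph{reducible} (two $(-1)$-curves $C_1',C_2'$ in the transverse surface $V'$), whereas here every fiber is an irreducible $\PP^1$ with self-intersection $0$ in $V'$, and elementary transformations of $V'$ centered on the central fiber do exist --- so no contradiction arises at the surface level. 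Finally, your ``cleanest route'' is circular: the assertion that the graph closure has single-point fibers over each point of $X$ is equivalent to the conclusion you are trying to prove.

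The paper closes this gap by a different mechanism. It first shows $\varphi$ contracts no divisor of $X$ (a contracted divisor is ruled by Lemma~\ref{lemma:exc}, hence cannot dominate the non-ruled $S$, hence would map to a point of $S$ since $S$ has no curves --- impossible as fibers of $f$ are one-dimensional), so $\varphi$ is an isomorphism in codimension $1$, inducing an isomorphism of the rank-$3$ bundles $f'_*\KKK_{X'}^{-1}$ and $\delta^*f''_*\KKK_{X''}^{-1}$ over the complement of a \emph{finite} set $\Xi\subset S$. By Hartogs this isomorphism extends over all of $S$, and the relative anticanonical embeddings $X\hookrightarrow\PP_S(\EEE')$, $X\hookrightarrow\PP_S(\EEE'')$ then force $\varphi$ to be holomorphic; applying the same to $\varphi^{-1}$ finishes the proof. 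This extension-of-the-direct-image-sheaf argument is the essential content of the lemma, and it is missing from your proposal.
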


\begin{proof}
Since the surface $S$ does not contain any (rational) curves, the map~$f$ is the maximal rationally connected fibration.
In particular, $f$ is equivariant with respect to the group~\mbox{$\Bim(X)$}.
Consider an arbitrary element~\mbox{$\varphi\in \Bim(X)$}.
It induces a biholomorphic map~\mbox{$\delta\colon S\to S$} by Lemma~\ref{lemma:Bir-vs-Aut} (because the surface $S$ is minimal).
As in the proof of Proposition~\ref{proposition:Delta-to-Delta} write our map as follows: $\varphi\colon X' \dashrightarrow X''$, where~\mbox{$X'=X''=X$}.
Put
$$
f'=\delta \circ f,\qquad f''=f.
$$
We have the following commutative diagram
\[
 \xymatrix@R=13pt {
 X'\ar[dr]_{f'}\ar@{-->}[rr]^{\varphi}&&X''\ar[dl]^{f''}
 \\&S&
 }
\]
where the map $\varphi$ is fiberwise.

Assume that   the map $\varphi$ contracts an irreducible divisor~\mbox{$D'\subset X'$}.
Then~$D'$ is bimeromorphic to a ruled surface (see Lemma~\ref{lemma:exc}). This implies that~\mbox{$f'(D')\neq S$}.
Then since $S$ does not contain any curves, the image~\mbox{$f'(D')$} must be a point; this is impossible,
because all the fibers of~$f'$ are one-dimensional.

Thus, $\varphi$ does not contract any divisors. The same holds for the inverse map $\varphi^{-1}$.
Therefore, $\varphi$ is an isomorphism in codimension  $1$, i.\,e.
there exist  closed analytic subsets $Z'\subset X'$ and $Z''\subset X''$ of codimension $2$ such that the restriction
$$
\varphi_{U'}=\varphi\vert_{U'}\colon U'\dashrightarrow U''
$$
to the open subset $U'=X\setminus Z'$ is an isomorphism with the open subset~\mbox{$U''=X\setminus Z''$}.
From the commutativity of the diagram we obtain~\mbox{$f'(Z')=f''(Z'')$}.

Put $\Xi=f'(Z')$.
Since $S$ does not contain any curves,  $\Xi$ is a finite subset on $S$. Put $S_0=S\setminus \Xi$.
The sheaves $\EEE'=f'_*\KKK_X'^{-1}$ and~\mbox{$\EEE''=f''_*\KKK_X''^{-1}$} are locally free of rank $3$.
Moreover, there are natural isomorphisms
\begin{equation*}
\EEE'\vert _{S_0}=  f'_* \KKK_X'^{-1}\vert _{U'}=f'_*\comp \varphi^* \KKK_X''^{-1}\vert _{U'}\cong
 \delta^*\comp f''_* \KKK_X''^{-1}\vert _{U''}=  \delta^* \EEE''\vert _{S_0}.
\end{equation*}

Thus, the vector bundles $\EEE'$ and $\delta^*\EEE''$ coincide on the open subset~\mbox{$S_0\subset S$}
whose complement is zero-dimensional. We claim that they coincide  everywhere, i.\,e.
$\EEE'=\delta^*\EEE''$. Indeed, the problem is local along the base, so we may assume that  $S\ni s$
is a small analytic neighborhood of a point $s$, and our vector bundles are trivial: $\EEE'=S\times \CC^3$ and~\mbox{$\EEE''=S\times \CC^3$}.
The isomorphism~\mbox{$\EEE\cong \EEE''$} on $S_0=S\setminus \{s\}$ is given by a matrix $\| g_{i,j}\|_{1\le i, j\le 3}$ whose entries are holomorphic on   $S_0$ functions. By Hartogs's extension theorem
these can be uniquely extended to  holomorphic functions on   $S$.
Moreover, the matrix $\| g_{i,j}\|$ is invertible on~$S_0$ and so it is invertible also on the whole  $S$.
This shows that there is an isomorphism of vector bundles $\EEE'\stackrel{\sim}\to \EEE''$

Since the anticanonical bundle  $\KKK_X^{-1}$ is very ample relative to $f'$ and~$f''$, it induces embeddings
$i'\colon X \hookrightarrow  \PP_S(\EEE')$ and $i''\colon X \hookrightarrow  \PP_S(\EEE'')$.
Note that there are isomorphisms
$$
\varphi_{U'}^*\KKK_X^{-1}\vert_{U''}\cong \varphi_{U'}^*\KKK_{U''}^{-1}\cong \KKK_{U'}^{-1}\cong\KKK_X^{-1}\vert_{U'}.
$$
This implies that the restrictions
$$
i'\vert_{U'}\colon U' \hookrightarrow  \PP_{S_0}(\EEE'), \quad
i''\vert_{U''}\colon U'' \hookrightarrow  \PP_{S_0}(\EEE'')
$$
of embeddings $i'$ and $i''$
commute with isomorphisms $\varphi_{U'}\colon U'\stackrel{\sim}\to U''$ and~\mbox{$\PP_{S_0}(\EEE') \cong \PP_{S_0}(\EEE'')$}. Thus, thus there is the following commutative diagram
\begin{equation*}
\xymatrix@R=9pt{
& U' \, \ar@{^{(}->}[rr]\ar@{^{(}->}[dl] \ar[dd]^(.35){\cong}|\hole&& X \ar@{^{(}->}[ld]\ar@{-->}[dd]^{\varphi}
\\
\PP_{S_0}(\EEE')\, \ar@{^{(}->}[rr]\ar[dd]^{\cong}  && \PP_{S}(\EEE')\ar[dd]^(.35){\cong}|\hole
\\
& U''\,  \ar@{^{(}->}[rr]\ar@{^{(}->}[dl]&& X\ar@{^{(}->}[dl]
\\
\PP_{S_0}(\EEE'')\, \ar@{^{(}->}[rr]&& \PP_{S}(\EEE'')
}
\end{equation*}
which shows that $\varphi$ is a holomorphic map.
Applying similar arguments to $\varphi^{-1}$ we obtain that
 $\varphi$ is an isomorphism.
\end{proof}

\begin{remark}
In Lemma~\ref{lemma:X-vs-X-prime} it is not sufficient to assume that
the surface $S$ has algebraic dimension~$0$. Indeed, if $S$ contains a
curve $C$, then passing to a suitable bimeromorphic model of $S$ we may suppose that $C$ is non-singular. In this case the conic bundle $X=S\times\PP^1$ admits
elementary transformations over the curve~$C$.
\end{remark}

Now we can to prove the main result of this section.
Note that it can be obtained as a particular case of
\cite[Theorem~1.2]{BZ19}.

\begin{theorem}\label{theorem:aS=0}
Let $X$ be a compact complex
threefold, and $S$ be a compact complex surface with $\varkappa(S)\ge 0$ and $\ab(S)=0$. Let
$\tau\colon X \dashrightarrow S$ be a rationally connected fibration.
Then the group $\Bim(X)$ is Jordan.
\end{theorem}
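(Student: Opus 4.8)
The plan is to reduce to the case where $S$ is a complex torus or a $K3$ surface, then invoke the classification machinery already developed. First I would dispose of the possibility that $\Bim(X)$ is not Jordan by deriving a contradiction. So assume $\Bim(X)$ is not Jordan. Since $\varkappa(S)\ge 0$, the surface $S$ is non-ruled, so $\tau$ is equivariant for all of $\Bim(X)$ and we get the exact sequence
\begin{equation*}
1\to \Bim(X)_{\tau}\to\Bim(X)\to\Bim(S).
\end{equation*}
By Corollary~\ref{corollary:fiberwise-RC} (or Corollary~\ref{corollary:fiberwise-dim-1}) the group $\Bim(X)_{\tau}$ is Jordan, so by Lemma~\ref{lemma:group-theory} the group $\Bim(S)$ must have unbounded finite subgroups. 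Now I would apply Proposition~\ref{proposition:MRC-not-Jordan}: its hypotheses (non-negative Kodaira dimension of $S$, non-algebraicity of $S$, non-Jordan $\Bim(X)$, rationally connected fibration $\tau$) are all in force, so $S$ is bimeromorphic to a complex torus. Alternatively one can argue more directly from Proposition~\ref{proposition:BFS-for-surfaces} together with $\ab(S)=0$: among complex tori, bielliptic surfaces, Kodaira surfaces, and surfaces of Kodaira dimension~$1$, only complex tori (and $K3$ surfaces, but those have bounded finite subgroups by Theorem~\ref{theorem:aut} since they are K\"ahler) have algebraic dimension~$0$; and a bielliptic surface, a Kodaira surface, or a surface of Kodaira dimension~$1$ has algebraic dimension~$1$, hence is excluded. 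So in all cases $S$ is bimeromorphic to a complex torus $S_0$ of algebraic dimension~$0$.

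Next I would invoke the dichotomy from Lemma~\ref{lemma:P1-bundle-over-a-torus} (via Corollary~\ref{corollary:aX-aS-1}). If $X$ is \emph{not} bimeromorphic to the projectivization of a holomorphic rank~$2$ vector bundle on $S_0$, then Corollary~\ref{corollary:aX-aS-1} gives directly that $\Bim(X)$ is Jordan, contradicting our assumption. So it remains to handle the case where $X$ is bimeromorphic to $\PP_{S_0}(\EEE)$ for some holomorphic rank~$2$ bundle $\EEE$ on the complex torus $S_0$ with $\ab(S_0)=0$. The crucial point here is that a complex torus of algebraic dimension~$0$ contains no curves at all, so by Lemma~\ref{lemma:X-vs-X-prime} (applied to a standard conic bundle model $\bar f\colon \bar X\to S_0$ produced by Proposition~\ref{proposition:standartization} and Remark~\ref{remark:torus-easy}) we have $\Bim(\bar X)=\Aut(\bar X)$. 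Since $\bar X$ is a compact K\"ahler manifold — being the projectivization of a holomorphic vector bundle over a compact K\"ahler manifold, by Theorem~\ref{theorem:Voisin} — the group $\Aut(\bar X)$ is Jordan by Theorem~\ref{theorem:aut}. As $\Bim(X)\cong\Bim(\bar X)$ is an isomorphism of abstract groups (bimeromorphic equivalence), $\Bim(X)$ is Jordan, the desired contradiction.

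The main obstacle I anticipate is the standardization step: one needs to know that $X$ admits a standard conic bundle model over the torus $S_0$ itself (not merely over some bimeromorphic surface), so that the absence of curves on $S_0$ can be exploited. This is exactly what Proposition~\ref{proposition:standartization} together with Remark~\ref{remark:torus-easy} provides, since the almost minimal model of a surface bimeromorphic to a complex torus is the torus. A secondary technical point is checking that $X$ (equivalently $\bar X$) is K\"ahler; here one must pass through the K\"ahler model $\PP_{S_0}(\EEE)$ and use that Jordan property is a bimeromorphic invariant of the group $\Bim$. Once these are in place the argument closes cleanly, with every branch of the case analysis terminating in a contradiction to the standing assumption that $\Bim(X)$ is not Jordan.
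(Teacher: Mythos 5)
Your proposal is correct and follows essentially the same route as the paper: reduce to $S$ a complex torus via Proposition~\ref{proposition:MRC-not-Jordan}, reduce to $X=\PP_{S_0}(\EEE)$ via Lemma~\ref{lemma:P1-bundle-over-a-torus} (equivalently Corollary~\ref{corollary:aX-aS-1}), use that a torus of algebraic dimension $0$ contains no curves so that Lemma~\ref{lemma:X-vs-X-prime} gives $\Bim=\Aut$, and conclude by Theorems~\ref{theorem:Voisin} and~\ref{theorem:aut}. The only stray remark is the parenthetical about $K3$ surfaces in your alternative argument (they do not occur in the list of Proposition~\ref{proposition:BFS-for-surfaces}, and Jordanness does not give bounded finite subgroups), but that branch is dispensable since your main appeal to Proposition~\ref{proposition:MRC-not-Jordan} already settles the reduction.
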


\begin{proof}
By Proposition~\ref{proposition:MRC-not-Jordan}
it is sufficient to consider the case where the surface $S$ is bimeromorphic to a complex torus.
Moreover,
by Proposition~\ref{proposition:standartization} and Remark~\ref{remark:torus-easy}
we may assume
that $S$ is a complex torus, and the map $\tau$ is holomorphic. Furthermore,
by Lemma~\ref{lemma:P1-bundle-over-a-torus}
it is sufficient to consider the case when $X$ is the projectivization of a holomorphic rank $2$ vector bundle on $S$. In this case the manifold $X$ is K\"ahler by Theorem~\ref{theorem:Voisin}.
Recall that a complex torus  of algebraic dimension zero does not contain any curves. Thus, according to Lemma~\ref{lemma:X-vs-X-prime},
the group~\mbox{$\Bim(X)$} acts biholomorphically on $X$.
Now the assertion follows from Theorem~\ref{theorem:aut}.
\end{proof}

\section{Case $\ab(S)=1$}
\label{section:a-1-1}

In this section we prove Theorem~\ref{theorem:main}.
Our nearest purpose is to study compact complex threefolds of algebraic dimension~$2$ for which the base of the maximal rationally connected fibration
has algebraic dimension~$1$.
The proof of the following fact was explained to us by F.\,Campana.

\begin{lemma}
\label{lemma:fiber-product}
Let $X$ be a compact complex threefold,
$B$ and $S$ be compact complex surfaces, and $C$ be a smooth curve.
Let~\mbox{$\tau\colon X\dasharrow S$}
and~\mbox{$\eta\colon X\dasharrow B$} be dominant meromorphic maps,
wherein $\eta$ has connected fibers.
Let~\mbox{$\theta\colon S\to C$} and $\sigma\colon B\to C$
be surjective morphisms with connected fibers such that $\theta\circ\tau=\sigma\circ\eta$.
Assume that the surface $B$ algebraic, and the
algebraic dimension of the surface~$S$ is equal to~$1$.
Then the manifold~$X$
is bimeromorphic to the fiber product~\mbox{$Y=S\times_C B$}.
\end{lemma}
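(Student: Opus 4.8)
The plan is to exploit the natural map to the fiber product and show it is bimeromorphic by a dimension and generic-fiber analysis. Since $\theta\circ\tau=\sigma\circ\eta$, the maps $\tau$ and $\eta$ induce a meromorphic map
\[
\mu\colon X\dasharrow Y=S\times_C B,
\]
and composing $\mu$ with the two projections recovers $\tau$ and $\eta$. First I would record that $Y$ is a compact complex threefold: a typical fiber of $\sigma\colon B\to C$ is a curve, so a typical fiber of $S\times_C B\to C$ is a product of a typical fiber of $\theta$ with a typical fiber of $\sigma$, hence a surface, giving $\dim Y=3$. Thus $\mu$ is a meromorphic map between threefolds, and it suffices to prove that $\mu$ is dominant with typical fiber a single point; equivalently, that $\mu$ is generically injective, i.e. that $\tau$ and $\eta$ together separate typical points of $X$.

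The key step is to analyze the restriction of $\mu$ to a typical fiber of $\sigma\circ\eta=\theta\circ\tau$ over a general point $c\in C$. Let $F_c\subset X$ be such a fiber; it maps to $S_c=\theta^{-1}(c)$ via $\tau$ and to $B_c=\sigma^{-1}(c)$ via $\eta$, and $\mu$ restricts to a meromorphic map $F_c\dasharrow S_c\times B_c$. Here I would invoke Corollary~\ref{corollary:two-intersecting-curves}: since $\ab(S)=1$, every curve on $S$ lies in a fiber of $\theta$, so $S_c$ is an irreducible curve (a typical fiber of the algebraic reduction of $S$, in fact an elliptic curve), while $B_c$ is an irreducible curve since $B$ is algebraic and $\sigma$ has connected fibers. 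So $F_c$ is a surface mapping to a product of two curves. Now $\eta$ has connected fibers, so $\eta|_{F_c}\colon F_c\to B_c$ has connected (typical) fibers; these fibers are curves inside $F_c$ mapping to $S_c$ under $\tau$. The crucial point is that a typical fiber $G$ of $\eta|_{F_c}$ maps \emph{finitely} (in fact with degree one) onto $S_c$ under $\tau$: if $\tau(G)$ were a point, then $\tau$ would contract the divisor $\eta^{-1}(B_c\setminus\{\text{pt}\})\cdots$, contradicting that $\tau$ is dominant onto the surface $S$ and its typical fibers are curves (not surfaces); so $\tau|_G$ is dominant onto $S_c$, hence generically finite. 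A monodromy/connectedness argument over $C$ then forces the degree to be one — otherwise the induced field extension $\MMM(X)/\MMM(Y)$ would be nontrivial, but one checks $\ab(X)=\ab(Y)=2$ and the meromorphic function fields agree because $S$, $B$ and $C$ already account for all meromorphic functions on $X$ via $\tau$ and $\eta$. I would make this precise by computing $\ab(Y)$: since $B$ is algebraic and $S$ has $\ab(S)=1$, the fiber product $Y$ has algebraic dimension $2$, with algebraic reduction factoring through $B$; and the pullbacks $\tau^*\MMM(S)$ and $\eta^*\MMM(B)$ generate a subfield of $\MMM(X)$ of transcendence degree $2$, so $\ab(X)\ge 2$, while Theorem~\ref{theorem:CHP}(iv) (or a direct argument) gives $\ab(X)=2$. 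Comparing, $\mu^*\colon\MMM(Y)\to\MMM(X)$ is an isomorphism of fields of the same transcendence degree, and since $Y$ is Moishezon in the horizontal direction this upgrades to $\mu$ being bimeromorphic.

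The main obstacle I anticipate is the last implication: knowing that $\mu^*$ is an isomorphism on meromorphic function fields does \emph{not} by itself give that $\mu$ is bimeromorphic, precisely because of the phenomenon flagged in the preliminaries (a bimeromorphic map is not determined by $f^*$ on $\MMM$). The fix is to argue fiberwise over $C$ using that $B$ — and hence the base direction — is algebraic: replace $X$ by the graph of $\mu$ and resolve, so $\mu$ becomes a morphism $X\to Y$ over $C$; over a general $c$ the induced map $F_c\to S_c\times B_c$ is a morphism of a surface to a product of two curves which is surjective (by the function-field computation restricted to the fiber) and has connected fibers, hence is bimeromorphic by checking it is generically one-to-one — and generic injectivity follows because a general fiber $G$ of $\eta|_{F_c}$ maps with degree one to $S_c$, established above from the connectedness of the fibers of $\eta$ together with the fact that $\tau$ does not contract $G$. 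Carrying the "degree one" claim honestly is the real work; everything else is bookkeeping with algebraic dimensions and Corollary~\ref{corollary:two-intersecting-curves}.
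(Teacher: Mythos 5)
Your setup coincides with the paper's: both pass to holomorphic models, form the natural map $\zeta\colon X\to Y=S\times_C B$, observe it is surjective and generically finite, and reduce the lemma to showing its degree is one. The gap is precisely at that last step. Your primary justification --- that degree $>1$ would force $\MMM(X)/\zeta^*\MMM(Y)$ to be a nontrivial field extension --- is not valid for non-Moishezon manifolds: a generically finite surjective map of degree $d>1$ can induce an isomorphism on meromorphic function fields (an isogeny of complex tori of algebraic dimension $0$ already exhibits the phenomenon). You flag this problem yourself, but your proposed fix is circular: generic injectivity of $F_c\to S_c\times B_c$ is reduced to the claim that a general $\eta$-fiber $G$ maps with degree one onto $S_c$, and that claim is exactly the one that was ``established'' only by the invalid function-field argument. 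Connectedness of the fibers of $\eta$ does not rescue it by itself either: $S_c$ is typically an elliptic curve (a fiber of the algebraic reduction of $S$), and a connected unramified cover of an elliptic curve of any degree $d$ exists, so a connected $\eta$-fiber can a priori still be a degree-$d>1$ cover of $S_c$.

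The paper closes this gap with an argument that is missing from your proposal: it examines the branch divisor $R\subset Y$ of $\zeta$ and runs a dichotomy. If no component of $R$ dominates $B$, then over the open set $B\setminus\eta_Y(R)$ the map $\eta=\eta_Y\circ\zeta$ would have disconnected fibers, contradicting the hypothesis that $\eta$ has connected fibers. If some component $R'$ does dominate $B$, then $\eta_Y|_{R'}$ is generically finite over the algebraic surface $B$, so $R'$ is an algebraic surface; since $S$ is not algebraic, $\tau_Y|_{R'}$ cannot be generically finite onto $S$, so $\tau_Y(R')$ lies in a curve of $S$, and by Corollary~\ref{corollary:two-intersecting-curves} every curve on $S$ is contained in a fiber of the algebraic reduction $\theta$; hence $\theta(\tau_Y(R'))$ is a point, contradicting the fact that $\sigma\circ\eta_Y$ maps $R'$ onto $C$. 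This is the step where the hypotheses $\ab(S)=1$ and ``$B$ algebraic'' do real work, whereas in your write-up they appear only in the bookkeeping; without an argument of this kind the degree-one claim, and with it the lemma, remains unproved.
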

\begin{proof}
We may assume that the maps~$\tau$
and $\eta$ are holomorphic.
There is a natural morphism $\zeta\colon X\to Y$.
It is easy to see that $Y$ is a three-dimensional
reduced irreducible compact complex space (with at worst
hypersurface singularities), and the morphism $\zeta$
is surjective.
In particular, a typical fiber of $\zeta$ is finite.
Denote by $\tau_Y$ and $\eta_Y$ the natural projections of $Y$ to surfaces
$S$ and $B$, respectively.
\begin{equation}\label{eq:diagram-fiber-product}
\vcenter{
\xymatrix{
&&X\ar[d]^{\zeta}\ar[lldd]_{\tau}\ar[rrdd]^{\eta}&&\\
&&Y\ar@{->}[lld]^{\tau_Y}\ar@{->}[rrd]_{\eta_Y}&&\\
S\ar@{->}[rrd]_{\theta}&&&&B\ar@{->}[lld]^{\sigma}\\
&&C&&
}}	
\end{equation}

Assume that the map $\zeta$ is not bimeromorphic.
Let $R\subset Y$ be the ramification divisor of
$\zeta$.
If $\eta_Y$ does not map $R$ to $B$ surjectively (in particular, if~\mbox{$R=\varnothing$}), then
the map~\mbox{$\eta=\eta_Y\circ\zeta$}
has non-connected fibers over the points of the open
set~\mbox{$B\setminus \eta_Y(R)$}, which
is impossible by our assumption. Let
$R'$~be some irreducible component of the divisor~$R$
which is mapped surjectively to $B$. Then the restriction of the
map~$\eta_Y$ to $R'$ is finite over a typical point of $B$,
which implies that $R'$ is an algebraic surface. Since
the surface $S$ is not algebraic, the restriction of the map
$\tau_Y$ to $R'$ cannot be finite over a typical point
of $S$. Therefore,
the image~\mbox{$\tau_Y(R')$} is contained in a curve on~$S$.

Note that $\ab(S)=1$, and $\theta$ is the
algebraic reduction for $S$. Hence
none of the curves on $S$
maps surjectively to $C$ by
Corollary~\ref{corollary:two-intersecting-curves}.
From this
we see that the image of $\theta\circ\tau_Y(R')$
is a point. On the other hand, the morphism~\mbox{$\sigma\circ\eta_Y$}  surjectively maps $R'$ to $C$ which gives a contradiction with the commutativity of the diagram~\eqref{eq:diagram-fiber-product}.
\end{proof}

\begin{corollary}
\label{corollary:a2MRC1}
Let $X$ be a compact complex threefold of algebraic dimension~$2$,
and $S$ be a compact complex surface bimeromorphic to a complex torus.
Assume that $\ab(S)=1$.
Let ~\mbox{$\tau\colon X\dasharrow S$} be a
rationally connected fibration.
Then the manifold $X$ is bimeromorphic to~\mbox{$S\times\PP^1$}.
\end{corollary}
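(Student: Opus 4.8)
The plan is to extract from the algebraic dimension hypothesis an algebraic reduction of $X$ that is a surface, and then to recognize $X$ as a fiber product to which Lemma~\ref{lemma:fiber-product} applies. First I would pass to a bimeromorphic model on which $\tau$ is holomorphic, and I would let $\eta\colon X\dasharrow B$ be the algebraic reduction of $X$, so that $B$ is a projective surface (here $\ab(X)=2$ forces $\dim B=2$, and $B$ is algebraic by definition of the algebraic reduction); we may take $\eta$ to have connected fibers. Since the surface $S$ is non-algebraic but has $\ab(S)=1$, let $\theta\colon S\to C$ be its algebraic reduction, with $C$ a smooth curve; by Corollary~\ref{corollary:two-intersecting-curves} every curve on $S$ lies in a fiber of $\theta$.

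The key point is to produce the morphism $\sigma\colon B\to C$ making the square commute, i.e.\ to show that the composition $\theta\circ\tau\colon X\dasharrow C$ factors through $\eta$. This is where the algebraic reduction does its work: the map $\theta\circ\tau$ has as target the curve $C$, which is algebraic, so by the universal property of the algebraic reduction $\eta\colon X\dasharrow B$ the map $\theta\circ\tau$ factors (meromorphically) through $\eta$, giving a meromorphic and hence holomorphic map $\sigma\colon B\to C$ with $\sigma\circ\eta=\theta\circ\tau$. After replacing $C$ by the image and Stein-factorizing, we may assume $\sigma$ and $\theta$ have connected fibers. Now all the hypotheses of Lemma~\ref{lemma:fiber-product} are in place — $B$ algebraic, $\ab(S)=1$ — so $X$ is bimeromorphic to the fiber product $Y=S\times_C B$.

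It remains to identify this fiber product with $S\times\PP^1$. Here I would argue on a typical fiber: the typical fiber of $\tau$ is $\PP^1$ (being an irreducible rationally connected curve), so the typical fiber of $\theta\circ\tau$ over a point of $C$ is a (compactified) family over the typical fiber of $\theta$, forcing the typical fiber of $\sigma\colon B\to C$ to be a rational curve; a fibration $B\to C$ in rational curves over a curve is birationally trivial, so $B$ is bimeromorphic over $C$ to $C\times\PP^1$. Substituting into $Y=S\times_C B\sim_{\mathrm{bir}} S\times_C (C\times\PP^1)=S\times\PP^1$ gives the claim. The main obstacle I anticipate is the bookkeeping in the second paragraph — justifying cleanly that $\theta\circ\tau$ really factors through the algebraic reduction and that one can arrange all relevant fibrations to have connected fibers without destroying the commutativity — rather than anything in the final identification, which is essentially formal once the fiber-product description is available.
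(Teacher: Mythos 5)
Your proposal is correct and follows essentially the same route as the paper: both construct $\sigma\colon B\to C$ from the identification $\MMM(X)\cong\MMM(B)$ (your ``universal property of the algebraic reduction'' is exactly this mechanism), apply Lemma~\ref{lemma:fiber-product}, and conclude by trivializing $B$ over $C$ as $C\times\PP^1$. The one point where the paper is slightly more explicit is in observing that $\eta$ cannot contract the $\PP^1$-fibers of $\tau$ (otherwise $\MMM(X)$ would have transcendence degree at most $\ab(S)=1$), which is what rules out the typical fiber of $\sigma$ being elliptic rather than rational in your final step.
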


\begin{proof}
Consider the algebraic reduction~\mbox{$\eta\colon X\dasharrow B$},
where $\dim B=2$ by our assumption. We may assume that the maps $\tau$
and $\eta$ are holomorphic, and $B$ is a non-singular surface.
Moreover, by Proposition~\ref{proposition:standartization} and Remark~\ref{remark:torus-easy} we may assume
that the surface $S$
is a complex torus. Then its algebraic reduction $\theta\colon S\to C$ is an elliptic fibration over some elliptic curve~$C$.

Since $\eta$ does not contract a typical fiber of the map $\tau$,
the surface~$B$ is ruled. The embedding of the fields of meromorphic functions
$$
\MMM(C) \subset \MMM(X)=\MMM(B)
$$
induces a map $\sigma\colon B\dashrightarrow C$ which must be a morphism with rational fibers
(in fact, the
morphisms~\mbox{$\tau\colon X\to S$} and $\sigma\colon B\to C$ are Albanese maps).
By Lemma~\ref{lemma:fiber-product}
the manifold $X$ is bimeromorphic to the fiber product
$S\times_C B$.
Since the ruled surface $B$ is bimeromorphic to ~\mbox{$C\times\PP^1$}, the threefold $X$ is bimeromorphic to $S\times\PP^1$.
\end{proof}

\begin{corollary}
\label{corollary:summary}
Let $X$ be a compact complex threefold,
and $S$ be a non-algebraic compact complex surface of
non-negative Kodaira dimension.
Let ~\mbox{$\tau\colon X\dasharrow S$}~be a
rationally connected fibration.
Assume that the group $\Bim(X)$ is not Jordan.
Then the surface $S$ is bimeromorphic to a complex torus, and the threefold
$X$ is bimeromorphic to the projectivization of a holomorphic rank $2$ vector bundle on this complex torus.
Moreover, if $\ab(X)=2$, then the threefold $X$ is bimeromorphic to the
product~\mbox{$S\times\PP^1$}.
\end{corollary}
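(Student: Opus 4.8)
The plan is to deduce the statement directly from the results already established, namely Proposition~\ref{proposition:MRC-not-Jordan}, Corollary~\ref{corollary:aX-aS-1}, Theorem~\ref{theorem:aS=0}, and Corollary~\ref{corollary:a2MRC1}, together with the elementary facts that non-algebraicity of a compact complex surface and the algebraic dimension of a compact complex manifold are bimeromorphic invariants.

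First I would note that since $\dim X=3$ and $\dim S=2$, a typical fiber of the rationally connected fibration $\tau$ is a rational curve, so that all the cited statements apply to the triple $(X,S,\tau)$ without extra assumptions. As $S$ is non-algebraic with $\varkappa(S)\ge 0$ and $\Bim(X)$ is not Jordan, Proposition~\ref{proposition:MRC-not-Jordan} shows that $S$ is bimeromorphic to a complex torus $S_0$; since bimeromorphic maps of surfaces preserve non-algebraicity, $S_0$ is a non-algebraic two-dimensional complex torus, and in particular $\ab(S)=\ab(S_0)\in\{0,1\}$. Next, applying the contrapositive of Corollary~\ref{corollary:aX-aS-1} to $X$, to the surface $S$, and to the torus $S_0$ bimeromorphic to it, and using that $\Bim(X)$ is not Jordan, I obtain that $X$ is bimeromorphic to the projectivization of a holomorphic rank $2$ vector bundle on $S_0$. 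This already yields the first two assertions of the corollary.

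It remains to handle the case $\ab(X)=2$. Here I would first exclude the possibility $\ab(S)=0$: if we had $\ab(S)=0$, then $S$ would be a surface with $\varkappa(S)\ge 0$ (in fact $\varkappa(S)=0$, as $S$ is bimeromorphic to a torus) and $\ab(S)=0$, and then Theorem~\ref{theorem:aS=0} would force $\Bim(X)$ to be Jordan, contrary to hypothesis. Hence $\ab(S)=1$. Now Corollary~\ref{corollary:a2MRC1}, applied to the threefold $X$ of algebraic dimension $2$, to the surface $S$ (which is bimeromorphic to the complex torus $S_0$ and satisfies $\ab(S)=1$), and to the rationally connected fibration $\tau$, shows that $X$ is bimeromorphic to $S\times\PP^1$.

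Since the corollary is essentially an assembly of previously proved facts, I do not anticipate a genuine difficulty. The points that require some care are verifying that the side hypotheses of the invoked results hold in our setting (the non-algebraicity and the vanishing of the Kodaira dimension of $S$, and the fact that $\tau$ remains a rationally connected fibration after replacing $X$ and $S$ by bimeromorphic models), and correctly using Corollary~\ref{corollary:aX-aS-1} and Theorem~\ref{theorem:aS=0} in contrapositive form.
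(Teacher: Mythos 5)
Your proposal is correct and follows essentially the same route as the paper: Proposition~\ref{proposition:MRC-not-Jordan} to get the torus, the contrapositive of Corollary~\ref{corollary:aX-aS-1} for the rank-$2$ projectivization, and Theorem~\ref{theorem:aS=0} plus Corollary~\ref{corollary:a2MRC1} for the case $\ab(X)=2$. The only (immaterial) difference is that the paper invokes Theorem~\ref{theorem:aS=0} up front to conclude $\ab(S)=1$ unconditionally, whereas you defer it to the $\ab(X)=2$ case.
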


\begin{proof}
By Proposition~\ref{proposition:MRC-not-Jordan}
the surface $S$ is bimeromorphic to some complex torus $S_0$, and $\ab(S)=1$ by Theorem~\ref{theorem:aS=0}.
From Corollary~\ref{corollary:aX-aS-1}
we conclude that $X$ is bimeromorphic to the projectivization of a holomorphic rank $2$ vector bundle on a complex torus~$S_0$.
Moreover, if~\mbox{$\ab(X)=2$}, then by Corollary~\ref{corollary:a2MRC1}
the threefold $X$ is bimeromorphic to the
product $S\times\PP^1$.
\end{proof}

Now we are ready to prove the main result of this paper.

\begin{proof}[The proof Theorem~\ref{theorem:main}]
Let $\tau\colon X\dasharrow S$ is the maximal rationally connected fibration.
Since the manifold $X$ is not algebraic, we have~\mbox{$\dim S=2$}
and $\varkappa(S)\ge 0$ by Corollary~\ref{cor:Kahler-proj1}. Also we know that the surface $S$ is not algebraic by Lemma~\ref{lemma:projective-easy}.
It remains to apply Corollary~\ref{corollary:summary}.
\end{proof}

\bibliography{bib}
\bibliographystyle{ugost2008s}
\end{document}